\let\reftagform@=\tagform@
\def\tagform@#1{\maketag@@@{(\ignorespaces\textcolor{blue}{#1}\unskip\@@italiccorr)}}
\renewcommand{\eqref}[1]{\textup{\reftagform@{\ref{#1}}}}
\newtheorem{theorem}{Theorem}
\theoremstyle{plain}
\newtheorem{corollary}{Corollary}
\newtheorem{example}{Example}
\newtheorem{lemma}{Lemma}
\newtheorem{remark}{Remark}
\numberwithin{equation}{section}
\DeclareMathOperator{\spe}{sp}
\def\etal{et al.\,}
\DeclareMathOperator{\tr}{tr}
\begin{document}
	\title[On the Davis--Wielandt radius inequalities]{On the Davis--Wielandt radius inequalities of Hilbert space operators}
	\author[M.W. Alomari ]{M.W. Alomari} 
	\address {Department of Mathematics, Faculty of Science and Information	Technology, Irbid National University,  P.O. Box 2600, Irbid, P.C. 21110, Jordan.}
	\email{mwomath@gmail.com}

	\date{\today}
	\subjclass[2010]{Primary: 47A30, 47A12; Secondary  47A63,
		47L05.}
	
	\keywords{Davis-Wielandt radius, Davis-Wielandt shell, $n\times n$ operator.}
	
\begin{abstract}
 In this work, some new upper and lower bounds of the Davis--Wielandt radius are introduced. Generalizations of some presented results  are obtained.   Some bounds of the Davis--Wielandt radius  for $n\times n$ operator matrices are established.   An extension of the Davis--Wielandt radius to the Euclidean operator radius is introduced.  
\end{abstract}

 	\maketitle

%/=\=/=\=/=\=/=\=/=\=/=\=/=\=/=\=/=\=/=\=/=\=/=\=/=\=/=\=/=\=/=\=/=\=/=\=/=\=/=\=/=\=/=\=/=\=/=\=/=\=/=\=/=\=
%/=\=/=\=/=\=/=\=/=\=/=\=/=\=/=\=/=\=/=\=/=\=/=\=/=\=/=\=/=\=/=\=/=\=/=\=/=\=/=\=/=\=/=\=/=\=/=\=/=\=/=\=/=\=
\section{Introduction} 
%/=\=/=\=/=\=/=\=/=\=/=\=/=\=/=\=/=\=/=\=/=\=/=\=/=\=/=\=/=\=/=\=/=\=/=\=/=\=/=\=/=\=/=\=/=\=/=\=/=\=/=\=/=\=
%/=\=/=\=/=\=/=\=/=\=/=\=/=\=/=\=/=\=/=\=/=\=/=\=/=\=/=\=/=\=/=\=/=\=/=\=/=\=/=\=/=\=/=\=/=\=/=\=/=\=/=\=/=\=
%=/=\=/=\=/=\/=\=/=\=/=\=/=\=/=\/=\=/=\=/=\=/=\=/=\/=\=/=\=/=\=/=\=/=\/=\=/=\=/=\=/=\=/=\/=\=/=\=/=\=/=\=/=\=
    Let $\mathscr{B}\left( \mathscr{H}\right) $ be the Banach algebra
  of all bounded linear operators defined on a complex Hilbert space
  $\left( \mathscr{H};\left\langle \cdot ,\cdot \right\rangle
  \right)$  with the identity operator  $1_\mathscr{H}$ in
  $\mathscr{B}\left( \mathscr{H}\right) $.    When $\mathscr{H} =
  \mathbb{C}^n$, we identify $\mathscr{B}\left( \mathscr{H}\right)$
  with the algebra $\mathfrak{M}_{n\times n}$ of $n$-by-$n$ complex
  matrices. Then, $\mathfrak{M}^{+}_{n\times n}$ is just the cone of
  $n$-by-$n$ positive semidefinite matrices.

  For a bounded linear operator $S$ on a Hilbert space
  $\mathscr{H}$, the numerical range $W\left(S\right)$ is the image
  of the unit sphere of $\mathscr{H}$ under the quadratic form $x\to
  \left\langle {Sx,x} \right\rangle$ associated with the operator.
  More precisely,
  \begin{align*}
  W\left( S \right) = \left\{ {\left\langle {Sx,x} \right\rangle :x
  	\in \mathscr{H},\left\| x \right\| = 1} \right\}.
  \end{align*}
  Also, the numerical radius is defined to be
  \begin{align*}
  w\left( S \right) = \sup \left\{ {\left| \lambda\right|:\lambda
  	\in W\left( S \right) } \right\} = \mathop {\sup }\limits_{\left\|
  	x \right\| = 1} \left| {\left\langle {Sx,x} \right\rangle }
  \right|.
  \end{align*}
  
  We recall that,  the usual operator norm of an operator $S$ is
  defined to be
  \begin{align*}
  \left\| S \right\|  = \sup \left\{ {\left\| {Sx} \right\|:x \in
  	\mathscr{H},\left\| x \right\| = 1} \right\}.
  \end{align*}

  One of the most interesting generalization of numerical range is the Davis--Wielandt shell; which is defined as
  \begin{align*}
DW\left( S \right) = \left\{ {\left( {\left\langle {Sx,x} \right\rangle ,\left\langle {Sx,Sx} \right\rangle } \right),x \in \mathscr{H},\left\| x \right\| = 1} \right\}
  \end{align*}
for any $S\in  \mathscr{B}\left(\mathscr{H}\right)$.  Clearly, the projection of the set $DW\left(S\right)$ on the first co-ordinate is $W\left(S\right)$.

The Davis--Wielandt shell and its radius were introduced and described firstly by Davis in \cite{Davis1} and \cite{Davis2} and Wielandt \cite{W}.
In fact, the Davis--Wielandt shell $DW\left( S \right)$ gives more information about the operator $S$ and $W\left( S \right)$. For instance, in the finite dimensional case, Li and Poon proved \cite{LP1} (see also \cite{LP2}) that the normal property of Hilbert space operators can be completely determined by
the geometrical shape of their Davis--Wielandt shells, namely, $S\in \mathfrak{M}_{n\times n}\left(\mathbb{C}\right)$ is normal if and
only if $DW\left(S\right)$ is a polyhedron in $\mathbb{C}\times \mathbb{R}$ identified with $\mathbb{R}^3$. Moreover, in finite dimensional case, the spectrum of an operator $S$; $\spe\left(S\right)$  is finite and $DW\left(S\right)$ is always closed, cf \cite[Theorem 2.3]{LP1}. 
These conditions are no longer equivalent for an infinite-dimensional operator $S$, cf \cite[Example 2.5]{LP1}.

In \cite{LSZ}, Lins \etal proved that, if $S\in  \mathfrak{M}_{n\times n}\left(\mathbb{C}\right)$ is normal with finite spectrum $\spe\left(S\right)$, then $DW\left(S\right)$ is the convex hull of the points $\left( {{\mathop{\rm Re}\nolimits} \left( {\lambda _j } \right),{\mathop{\rm Im}\nolimits} \left( {\lambda _j } \right),\left| {\lambda _j } \right|^2 } \right)$ $(j=1,\cdots,n)$,  for $\lambda _j\in \spe\left(S\right)$. Moreover,  
each point $\left( {{\mathop{\rm Re}\nolimits} \left( {\lambda _j } \right),{\mathop{\rm Im}\nolimits} \left( {\lambda _j } \right),\left| {\lambda _j } \right|^2 } \right)$ is an extreme point of $DW\left(S\right)$. In particular case, if $n=2$ i.e., $S=\left[ {\begin{array}{*{20}c}
a & b  \\
c & d  \\
\end{array}} \right]$ has eigenvalues $\lambda _1,\lambda _2$, then $DW\left(S\right)$ 
degenerates to the line segment joining the points $\left( {\lambda _1 ,\left| {\lambda _1 } \right|^2 } \right)$ and $\left( {\lambda _2 ,\left| {\lambda _2 } \right|^2 } \right)$. So that $\dim DW\left(S\right) \le 1$. In fact,
the condition $dim \left(DW\left(S\right)\right) \le 1$ holds if and only if $S$ is normal, with at most
two distinct eigenvalues. Otherwise, $DW\left(S\right)$  is an ellipsoid (without its interior) centered at 
$\left( {\frac{{\lambda _1  + \lambda _2 }}{2},\frac{1}{2}\tr\left( {\left| S \right|^2 } \right)} \right)$.
Also, it was proved that if $dim \left(DW\left(S\right)\right)\ge 2$, then $DW\left(S\right)$ is always convex. A complete description of $DW\left(S\right)$ for a quadratic
operator $S$ was given in see \cite{LP2}. For more details see also \cite{LPS1}, \cite{LPS2} and \cite{LSZ}.  \\

In \cite{W}, Wielandt shown that the Davis--Wielandt shell is a useful tool for characterizing the eigenvalues of matrices in the set
\begin{align*}
\{U^*TU + V^*SV : U, V \in \mathfrak{M}_{n\times n} \text{ are unitary}\}
\end{align*}
for given $T,S  \in \mathfrak{M}_{n\times n}$.

The Davis--Wielandt radius of $S\in \mathscr{B}\left(\mathscr{H}\right)$ is defined as
 \begin{align*}
dw\left( S \right) = \mathop {\sup }\limits_{\scriptstyle x \in H \hfill \atop 
	\scriptstyle \left\| x \right\| = 1 \hfill} \left\{ {\sqrt {\left| {\left\langle {Sx,x} \right\rangle } \right|^2  + \left\| {Sx} \right\|^4 } } \right\}.
\end{align*}
One can easily check that $dw\left( S \right)$ is unitarily invariant but it does not define a norm on $ \mathscr{B}\left(\mathscr{H}\right)$. However, it can be represented as a special case of the Euclidean operator radius as proved in Lemma \ref{lemma4}, (see below).

As a direct consequence, one can easily observe  that
\begin{align}
\label{eq1.1}\max \left\{ {w\left( S \right),\left\| S \right\|^2 } \right\} \le dw\left( T \right) \le \sqrt {w^2 \left( S \right) + \left\| S \right\|^4 } 
\end{align}
for all $S\in \mathscr{B}\left(\mathscr{H}\right)$. The inequalities are sharp.

The following result characterize the norm--parallelism of operators and an equality condition for the
 Davis--Wielandt radius \cite{ZMCN}.
\begin{theorem}
\label{thm1}Let $S\in \mathscr{B}\left(\mathscr{H}\right)$. Then the following conditions are equivalent:
\begin{enumerate}
\item $S \parallel 1_{\mathscr{H}}$.

\item $dw\left( S \right) = \sqrt {w^2 \left( S \right) + \left\| S \right\|^4 } $.
\end{enumerate}
\end{theorem}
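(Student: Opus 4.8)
The plan is to route the equivalence through the single scalar identity $w(S)=\|S\|$, proving that each of the two conditions holds exactly when this identity does. The driving observation is purely elementary: for every unit vector $x$ one has $|\langle Sx,x\rangle|\le w(S)$ and $\|Sx\|\le\|S\|$, so
\begin{align*}
dw^{2}(S)=\sup_{\|x\|=1}\left(|\langle Sx,x\rangle|^{2}+\|Sx\|^{4}\right)\le w^{2}(S)+\|S\|^{4},
\end{align*}
which is the upper bound in \eqref{eq1.1}. Since the two summands have independent maxima, the elementary fact that $a_{n}\le A,\ b_{n}\le B,\ a_{n}+b_{n}\to A+B$ forces $a_{n}\to A$ and $b_{n}\to B$ shows that equality in the upper bound of \eqref{eq1.1}, i.e. condition (2), holds \emph{iff} there is a sequence of unit vectors $(x_{n})$ with
\begin{align*}
|\langle Sx_{n},x_{n}\rangle|\to w(S)\qquad\text{and}\qquad \|Sx_{n}\|\to\|S\|.
\end{align*}
Thus the entire theorem reduces to showing that such a joint approximation is possible precisely when $w(S)=\|S\|$.

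For (1) $\Rightarrow$ (2) I would argue directly. Writing $S\parallel 1_{\mathscr{H}}$ as $\|S+\lambda 1_{\mathscr{H}}\|=\|S\|+1$ for some unimodular $\lambda$ and taking unit vectors $x_{n}$ nearly attaining this norm, I expand
\begin{align*}
\|Sx_{n}+\lambda x_{n}\|^{2}=\|Sx_{n}\|^{2}+2\,\mathrm{Re}\!\left(\bar\lambda\langle Sx_{n},x_{n}\rangle\right)+1\longrightarrow(\|S\|+1)^{2}.
\end{align*}
Because $\|Sx_{n}\|^{2}\le\|S\|^{2}$ and $\mathrm{Re}(\bar\lambda\langle Sx_{n},x_{n}\rangle)\le|\langle Sx_{n},x_{n}\rangle|\le w(S)\le\|S\|$, the same summand argument forces $\|Sx_{n}\|\to\|S\|$ and $|\langle Sx_{n},x_{n}\rangle|\to\|S\|$, in particular $w(S)=\|S\|$; feeding this sequence into $dw$ gives $dw^{2}(S)\ge\|S\|^{2}+\|S\|^{4}=w^{2}(S)+\|S\|^{4}$ and hence (2). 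The reverse assembly is symmetric: once $w(S)=\|S\|$ is known, any sequence with $|\langle Sx_{n},x_{n}\rangle|\to\|S\|$ automatically has $\|Sx_{n}\|\to\|S\|$ by the squeeze $\|S\|\ge\|Sx_{n}\|\ge|\langle Sx_{n},x_{n}\rangle|$, and taking $\lambda$ to be a limit point of the phases of $\langle Sx_{n},x_{n}\rangle$ recovers $\|S+\lambda 1_{\mathscr{H}}\|=\|S\|+1$, i.e. (1).

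The substantive step is (2) $\Rightarrow w(S)=\|S\|$. Assume first the supremum is attained (the finite-dimensional case) by a joint maximizer $x_{0}$, so $S^{*}Sx_{0}=\|S\|^{2}x_{0}$ and $|\langle Sx_{0},x_{0}\rangle|=w(S)$; multiplying $S$ by a unimodular scalar I may take $\langle Sx_{0},x_{0}\rangle=w(S)\ge 0$, whence $x_{0}$ is a top eigenvector of $\mathrm{Re}(S)=\tfrac12(S+S^{*})$ for the eigenvalue $w(S)=\lambda_{\max}(\mathrm{Re}(S))$. Putting $u_{0}=Sx_{0}/\|S\|$ gives $S^{*}u_{0}=\|S\|x_{0}$ and a one-line computation $\langle Su_{0},u_{0}\rangle=\|S\|\langle u_{0},x_{0}\rangle=w(S)$, so $u_{0}$ is likewise a top eigenvector of $\mathrm{Re}(S)$. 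The four relations $Sx_{0}=\|S\|u_{0}$, $S^{*}u_{0}=\|S\|x_{0}$, $S^{*}x_{0}=2w(S)x_{0}-\|S\|u_{0}$, $Su_{0}=2w(S)u_{0}-\|S\|x_{0}$ show that $V=\mathrm{span}\{x_{0},u_{0}\}$ reduces $S$. Writing $\rho=\langle u_{0},x_{0}\rangle=w(S)/\|S\|$ and computing the matrix of $S|_{V}$ in an orthonormal basis of $V$, the block collapses to $\|S\|$ times a rotation matrix, hence to a scalar multiple of a unitary; being normal, $S|_{V}$ satisfies $w(S|_{V})=\|S\|$. Since $x_{0}\in V$ forces $w(S|_{V})=w(S)$, we conclude $w(S)=\|S\|$.

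The main obstacle will be the passage from this attained case to a general operator on $\mathscr{H}$, where the defining supremum need not be attained and $\|Sx_{n}\|\to\|S\|$ need not drive $x_{n}$ into the top singular subspace. I expect to run the argument asymptotically: from the joint-approximating sequence build $u_{n}=Sx_{n}/\|Sx_{n}\|$, show that the two-dimensional compressions of $S$ to $\mathrm{span}\{x_{n},u_{n}\}$ become \emph{approximately} scalar multiples of rotations, and let $n\to\infty$. The delicate point is controlling the non-reducing error $\|S^{*}Sx_{n}-\|S\|^{2}x_{n}\|$ along the sequence, which is automatic when $\|S\|$ is an isolated point of the singular spectrum but requires care in general; alternatively one can bypass it entirely by invoking the sequential characterization of norm-parallelism to $1_{\mathscr{H}}$ from \cite{ZMCN}, which already encodes $w(S)=\|S\|$.
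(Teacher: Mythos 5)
The paper contains no proof of Theorem \ref{thm1}: it is quoted as a known result from \cite{ZMCN} (with Corollary \ref{cor1} as its consequence), so your attempt has to be judged against the cited literature rather than an internal argument. Most of your architecture is correct: the reduction of condition (2) to the existence of a \emph{single} sequence of unit vectors $(x_n)$ with $|\langle Sx_n,x_n\rangle|\to w(S)$ and $\|Sx_n\|\to\|S\|$ simultaneously is valid; the implication (1)$\Rightarrow$(2) via expanding $\|Sx_n+\lambda x_n\|^2$ and the two-summand squeeze is valid; the recovery of (1) from $w(S)=\|S\|$ by extracting a limit phase $\lambda$ is valid; and your attained-case argument is also sound --- the four relations do show that $V=\mathrm{span}\{x_0,u_0\}$ reduces $S$, the block of $S|_V$ is $\|S\|$ times a rotation, and normality then forces $w(S)=\|S\|$.

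The genuine gap is exactly the step you flag yourself: (2)$\Rightarrow w(S)=\|S\|$ when the supremum defining $dw$ is not attained, which is the generic situation on infinite-dimensional $\mathscr{H}$. As submitted, this quarter of the theorem is only a plan, and neither escape route you propose closes it. Invoking the ``sequential characterization of norm-parallelism'' from \cite{ZMCN} cannot help, because that characterization is the equivalence (1)$\Leftrightarrow w(S)=\|S\|$, whereas what is missing is the bridge from (2) to $w(S)=\|S\|$; and appealing to the full theorem of \cite{ZMCN} would be circular, since it \emph{is} the statement being proved. The good news is that your compression strategy does work, and the error term $\|S^*Sx_n-\|S\|^2x_n\|$ you worry about never enters. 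With $u_n=Sx_n/\|Sx_n\|$, write the compression of $S$ to $\mathrm{span}\{x_n,u_n\}$ as a $2\times 2$ matrix $M_n$ in the orthonormal basis $e_1^{(n)}=x_n$, $e_2^{(n)}\propto u_n-\langle u_n,x_n\rangle x_n$; its entries are bounded by $\|S\|$, so along a subsequence $M_n\to M$ entrywise, and by continuity of the norm and the numerical radius on $2\times 2$ matrices together with the compression inequalities one gets $\|M\|\le\|S\|$ and $w(M)\le w(S)$. On the other hand $|M_{11}|=w(S)$ and $\|Me_1\|=\bigl(w^2(S)+\|S\|^2(1-\rho^2)\bigr)^{1/2}=\|S\|$, where $\rho=w(S)/\|S\|$, so both the norm and the numerical radius of the \emph{limit} matrix $M$ are attained at $e_1$; applying your attained-case algebra to $M$ itself (namely $M^*Me_1=\|M\|^2e_1$ and $\mathrm{Re}(\mu M)e_1=w(M)e_1$ for a suitable unimodular $\mu$) forces $M_{21}=\|S\|\sqrt{1-\rho^2}=0$, i.e.\ $\rho=1$. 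Until that limiting argument is actually carried out, the hardest implication of the theorem remains unproven in your write-up.
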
 
As a consequence of Theorem \ref{thm1}, we have the following result \cite{ZMCN}.
\begin{corollary}
\label{cor1} Let $S\in \mathscr{B}\left(\mathscr{H}\right)$. The following conditions are equivalent:
\begin{enumerate}
\item $dw\left( S \right) = \sqrt {w^2 \left( S \right) + \left\| S \right\|^4 } $.

\item $w\left( S \right) = \left\| S \right\|$.

\item $dw\left( S \right) = \left\| S\right\|\sqrt {1+ \left\| S \right\|^2 } $.

\item $S^*S \le w^2\left( S \right)  1_{\mathscr{H}}$.
\end{enumerate}
\end{corollary}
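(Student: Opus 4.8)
The plan is to organize the four conditions around the single identity $w(S)=\|S\|$ appearing in (2): I would first reduce (3) and (4) to (2), and treat the equivalence $(1)\Leftrightarrow(2)$ as the real content, drawing on Theorem \ref{thm1}.

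I would begin with $(2)\Leftrightarrow(4)$, which is essentially definitional. Since $\langle S^*Sx,x\rangle=\|Sx\|^2$, the operator inequality $S^*S\le w^2(S)1_{\mathscr{H}}$ means precisely $\|Sx\|^2\le w^2(S)\|x\|^2$ for every $x$, i.e. $\|S\|\le w(S)$. As $w(S)\le\|S\|$ holds in general, condition (4) is equivalent to $w(S)=\|S\|$, which is (2).

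The heart of the argument is $(1)\Leftrightarrow(2)$. By Theorem \ref{thm1}, condition (1) is exactly the norm-parallelism relation $S\parallel 1_{\mathscr{H}}$, so it suffices to establish that $S\parallel 1_{\mathscr{H}}$ if and only if $w(S)=\|S\|$. For the forward implication I would use that $\|S+\lambda 1_{\mathscr{H}}\|=\|S\|+1$ for some $|\lambda|=1$ yields unit vectors $x_n$ with $\|(S+\lambda)x_n\|\to\|S\|+1$; expanding $\|(S+\lambda)x_n\|^2=\|Sx_n\|^2+2\mathrm{Re}\left(\bar\lambda\langle Sx_n,x_n\rangle\right)+1$ and comparing against $\|Sx_n\|\le\|S\|$ and $\mathrm{Re}\left(\bar\lambda\langle Sx_n,x_n\rangle\right)\le|\langle Sx_n,x_n\rangle|\le w(S)\le\|S\|$ forces $|\langle Sx_n,x_n\rangle|\to\|S\|$, whence $w(S)=\|S\|$. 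For the converse, assuming $w(S)=\|S\|$ I would pick unit vectors $x_n$ with $\langle Sx_n,x_n\rangle\to\|S\|e^{i\theta}$ along a subsequence fixing the phase, set $\lambda=e^{i\theta}$, and observe that $|\langle(S+\lambda)x_n,x_n\rangle|\to\|S\|+1$ together with $\|S+\lambda\|\le\|S\|+1$ gives $\|S+\lambda\|=\|S\|+1$, i.e. $S\parallel 1_{\mathscr{H}}$. This norm-parallelism characterization, with its limiting and phase-selection steps, is the one genuinely nontrivial ingredient; I expect it to be the main obstacle, while the remaining reductions are bookkeeping.

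Finally, $(2)\Leftrightarrow(3)$ follows from \eqref{eq1.1} once $(1)\Leftrightarrow(2)$ is available, after noting $\|S\|\sqrt{1+\|S\|^2}=\sqrt{\|S\|^2+\|S\|^4}$. If (2) holds then (1) gives $dw(S)=\sqrt{w^2(S)+\|S\|^4}=\sqrt{\|S\|^2+\|S\|^4}$, which is (3). Conversely, if (3) holds, the upper bound $dw(S)\le\sqrt{w^2(S)+\|S\|^4}$ from \eqref{eq1.1} forces $\sqrt{\|S\|^2+\|S\|^4}\le\sqrt{w^2(S)+\|S\|^4}$, hence $\|S\|\le w(S)$ and therefore $w(S)=\|S\|$, recovering (2). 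Chaining these gives the full cycle of equivalences.
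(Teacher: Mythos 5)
Your proof is correct, but there is nothing in the paper to compare it against: the paper does not prove this corollary at all, it simply states it as a known consequence of Theorem \ref{thm1} with a citation to \cite{ZMCN}. What you have done is reconstruct the missing argument, and the reconstruction is sound. The reductions are handled exactly as one would hope: $(2)\Leftrightarrow(4)$ is definitional since $\langle S^*Sx,x\rangle=\|Sx\|^2$, so the operator inequality in (4) says precisely $\|S\|\le w(S)$; and $(2)\Leftrightarrow(3)$ correctly exploits the upper bound in \eqref{eq1.1} in one direction and the already-established $(1)\Leftrightarrow(2)$ in the other. The genuinely nontrivial ingredient you supply is the characterization that $S\parallel 1_{\mathscr{H}}$ holds if and only if $w(S)=\|S\|$ --- this is exactly the content that the paper outsources to \cite{ZMCN}. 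Your two limiting arguments for it are both valid: in the forward direction the expansion $\|(S+\lambda)x_n\|^2=\|Sx_n\|^2+2\mathrm{Re}\bigl(\bar\lambda\langle Sx_n,x_n\rangle\bigr)+1$ together with the bounds $\|Sx_n\|\le\|S\|$ and $\mathrm{Re}\bigl(\bar\lambda\langle Sx_n,x_n\rangle\bigr)\le w(S)\le\|S\|$ squeezes $w(S)$ up to $\|S\|$, and in the converse the phase-selection via compactness of the unit circle, followed by $|\langle(S+\lambda)x_n,x_n\rangle|\le\|S+\lambda\|\le\|S\|+1$, correctly produces the parallelism witness $\lambda$. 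The net effect is that your version makes the corollary self-contained modulo Theorem \ref{thm1} alone, whereas the paper leaves both it and the parallelism characterization to the cited reference.
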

To see how much the lower bound in \eqref{eq1.1} is sharp, we note that, from \eqref{eq1.1} we have
\begin{align*}
w^2 \left( S \right) + \left\| S \right\|^4  \ge dw^2\left( S \right) \ge \max \left\{ {w^2\left( S \right),\left\| S \right\|^4 } \right\} &= \frac{{w^2 \left( S \right) + \left\| S \right\|^4 }}{2} + \frac{1}{2}\left| {w^2 \left( S \right) - \left\| S \right\|^4 } \right| 
\\
&\ge \frac{1}{2} \left({w^2 \left( S \right) + \left\| S \right\|^4 }\right),
\end{align*}
which is the Arithmetic mean of $w^2 \left( S \right)$ and  $\left\| S \right\|^4$, and this means that the lower bound in \eqref{eq1.1} is on a good distance from  $dw\left(\cdot\right)$.  \\

In their recent elegant work \cite{ZS}, Zamani and Shebrawi proved several inequalities involving the Davis--Wielandt radius and the numerical radii of Hilbert space operators. Among others, they shown that
\begin{align*}
dw\left( S \right) \le \sqrt[4]{{w\left( {\left| S \right|^4  + \left| S \right|^8 } \right) + 2w^2 \left( {\left| S \right|^2 S} \right)}}.
\end{align*}
Other interesting results were given in the same work \cite{ZS}, have been discussed and (in some cases) improved by Bhunia \etal \cite{BBBP}, among others, they shown that 
\begin{align*}
dw^2 \left( S \right) &\le \left\| {\left| S \right|^2  + \left| S \right|^4 } \right\|, 
\end{align*}
and
\begin{align*}
dw^2 \left( S \right) &\le \frac{1}{2}w\left( {S^2  + \left\| S \right\|^2 } \right) + \left\| {S^4 } \right\|. 
\end{align*}
An important property regarding the Davis--Wielandt radius of summand of two operators, was also  presented in \cite{BBBP}, as follows:
\begin{align}
dw\left(S_1+S_2\right)\le dw\left(S_1\right)+dw\left(S_2\right)+dw\left(S^*_1S_2+S_2^*S_1\right) \label{eq1.2}
\end{align}
for all $S_1,S_2\in\mathscr{B}\left(\mathscr{H}\right)$. Based on that, an upper bound for the Davis--Wielandt radius of $2\times 2$ off-diagonal operator matrix was given as follows:
\begin{align}
dw\left( {\left[ {\begin{array}{*{20}c}
		0 & A  \\
		B & 0  \\
		\end{array}} \right]} \right) \le \left\| A \right\|\sqrt {\frac{1}{4}  + \left\| A \right\|^2 }  +\left\| B \right\| \sqrt {\frac{1}{4}   + \left\| B \right\|^2 }.
\end{align}
For more dtails and new results concerning the Davis--Wielandt shell and the Davis--Wielandt radius of an operator, we refer the reader to \cite{BSP}, \cite{BP}, \cite{Feki}, \cite{LP1}--\cite{LSZ}.\\

This work is organized as follows: In the next section, a representation of an operator $S\in \mathscr{B}\left( \mathscr{H}\right)$ in terms of the Euclidean operator radius is given. Some new upper and lower bounds of the Davis--Wielandt radius are introduced.   Some examples verified that the presented results are better (in some cases) than \eqref{eq1.1} are also provided. In Section \ref{sec3}, some bounds of the Davis--Wielandt radius  for $n\times n$ operator matrices are established.  An extension of the Davis--Wielandt radius to the Euclidean operator radius is introduced. 
 
 %%%%%%%%%%%%%%%%%%%%%%%%%%%%%%%%%%%%%%%%%%%%%%%%%%%%%%%%%%%%%%%%%%%%%%%%%%%%%%%%%%%%%%
 \section{The Results} \label{sec2}
 %%%%%%%%%%%%%%%%%%%%%%%%%%%%%%%%%%%%%%%%%%%%%%%%%%%%%%%%%%%%%%%%%%%%%%%%%%%%%%%%%%%%%%
 In order to prove our results we need  a sequence of lemmas.
 \begin{lemma}
 	\label{lemma1}  
 	  The Power-Mean inequality reads
 		\begin{align}
 		a^\alpha  b^{1 - \alpha }  \le \alpha a + \left( {1 - \alpha } \right)b \le \left( {\alpha a^p  + \left( {1 - \alpha } \right)b^p } \right)^{\frac{1}{p}}    \label{eq2.1}
 		\end{align}
 		for all  $\alpha \in \left[0,1\right]$, $a,b\ge0$ and $ p\ge1$.
 		 
 \end{lemma}
 \begin{lemma}  
 	\label{lemma2}  Let  $A\in \mathscr{B}\left( \mathscr{H}\right)^+ $, then
 	\begin{align}
 	\left\langle {Ax,x} \right\rangle ^p  \le \left\langle {A^p x,x} \right\rangle, \qquad p\ge1  \label{eq2.2}
 	\end{align}
 	for any unit vector $x\in\mathscr{H}$. The inequality is reversed if $p\in \left[0,1\right]$.
 \end{lemma}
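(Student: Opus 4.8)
The plan is to reduce this operator inequality to the scalar Jensen inequality by invoking the spectral representation of the positive operator $A$. Since $A \in \mathscr{B}\left(\mathscr{H}\right)^+$, its spectrum $\spe\left(A\right)$ is a compact subset of $\left[0,\left\|A\right\|\right]$, and the spectral theorem supplies a projection--valued measure $E$ on $\spe\left(A\right)$ for which $A = \int_{\spe\left(A\right)} \lambda \, dE\left(\lambda\right)$ and, by the functional calculus, $A^r = \int_{\spe\left(A\right)} \lambda^r \, dE\left(\lambda\right)$ for every exponent $r \ge 0$. The whole argument is then forced once the two sides of \eqref{eq2.2} are recognized as integrals of $\lambda \mapsto \lambda$ and $\lambda \mapsto \lambda^p$ against a single scalar measure built from $E$ and $x$.

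First I would fix a unit vector $x \in \mathscr{H}$ and introduce the scalar set function $\mu_x\left(\Omega\right) = \left\langle E\left(\Omega\right)x,x\right\rangle$. Because each $E\left(\Omega\right)$ is an orthogonal projection and $E$ is countably additive, $\mu_x$ is a finite positive Borel measure on $\spe\left(A\right)$, and its total mass is $\left\langle E\left(\spe\left(A\right)\right)x,x\right\rangle = \left\langle x,x\right\rangle = \left\|x\right\|^2 = 1$, so that $\mu_x$ is in fact a probability measure. Under this identification both quantities in \eqref{eq2.2} become honest integrals,
\begin{align*}
\left\langle Ax,x\right\rangle = \int_{\spe\left(A\right)} \lambda \, d\mu_x\left(\lambda\right), \qquad \left\langle A^p x,x\right\rangle = \int_{\spe\left(A\right)} \lambda^p \, d\mu_x\left(\lambda\right).
\end{align*}

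Then I would apply Jensen's inequality to the function $\varphi\left(t\right) = t^p$ on $\left[0,\infty\right)$ with respect to the probability measure $\mu_x$. For $p \ge 1$ the function $\varphi$ is convex, whence $\varphi\!\left(\int \lambda \, d\mu_x\right) \le \int \varphi\left(\lambda\right) \, d\mu_x$, which is exactly $\left\langle Ax,x\right\rangle^p \le \left\langle A^p x,x\right\rangle$. For $p \in \left[0,1\right]$ the same $\varphi$ is concave, so Jensen's inequality reverses and delivers the claimed reverse inequality. No supremum over $x$ is required, since the assertion is pointwise in the unit vector $x$.

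The argument is essentially routine once the spectral picture is in place, and the only point demanding genuine care is the verification that $\mu_x$ is a \emph{probability} measure: this is precisely where the normalization $\left\|x\right\| = 1$ is used, and without it Jensen's inequality would not apply to give the stated form. I would also note, to keep the proof self-contained with the tools just introduced, that in the finite--dimensional (or pure point spectrum) case the spectral integral collapses to a finite convex combination $\left\langle Ax,x\right\rangle = \sum_j t_j \lambda_j$ with $t_j = \left\|E\left(\{\lambda_j\}\right)x\right\|^2 \ge 0$ and $\sum_j t_j = 1$, so that the convex case $(p\ge 1)$ follows by iterating the rightmost Power--Mean inequality of Lemma \ref{lemma1} raised to the $p$-th power; the continuous version of Jensen handles the general bounded--operator case uniformly in both regimes.
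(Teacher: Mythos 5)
Your proposal is correct, but there is nothing in the paper to compare it against: the paper states Lemma \ref{lemma2} without proof, as one of a sequence of quoted auxiliary facts (it is the classical McCarthy, or H\"{o}lder--McCarthy, inequality). Your argument is the standard proof of that fact: represent $A=\int_{\spe\left(A\right)}\lambda\,dE\left(\lambda\right)$ by the spectral theorem, note that $\mu_x\left(\Omega\right)=\left\langle E\left(\Omega\right)x,x\right\rangle$ is a probability measure precisely because $\left\|x\right\|=1$, rewrite both sides of \eqref{eq2.2} as $\bigl(\int\lambda\,d\mu_x\bigr)^p$ and $\int\lambda^p\,d\mu_x$, and apply Jensen's inequality to $t\mapsto t^p$, which is convex on $\left[0,\infty\right)$ for $p\ge1$ and concave for $p\in\left[0,1\right]$; this yields both the inequality and its stated reversal in one stroke. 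Your closing observation is also sound: in the discrete-spectrum case the spectral integral is a finite convex combination, and the convex case of Jensen can indeed be recovered by iterating the rightmost Power--Mean inequality of Lemma \ref{lemma1} raised to the $p$-th power, so the argument is compatible with the elementary tools the paper introduces. The one point worth keeping explicit, as you do, is that the normalization $\left\|x\right\|=1$ is essential --- for a non-unit vector the correct statement acquires a factor $\left\|x\right\|^{2\left(p-1\right)}$ --- and that $A\ge0$ is what makes $A^p$ meaningful via the functional calculus and keeps the spectral measure supported in $\left[0,\left\|A\right\|\right]$ where $t^p$ has the claimed convexity or concavity.
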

 The mixed Schwarz inequality was introduced in \cite{TK}, as
 follows:
 \begin{lemma}
 	\label{lemma3}  Let  $A\in \mathscr{B}\left( \mathscr{H}\right)  $, then
 	\begin{align}
 	\left| {\left\langle {Ax,y} \right\rangle} \right|  ^2  \le \left\langle {\left| A \right|^{2\alpha } x,x} \right\rangle \left\langle {\left| {A^* } \right|^{2\left( {1 - \alpha } \right)} y,y} \right\rangle, \qquad 0\le \alpha \le 1 \label{eq2.3}
 	\end{align}
 	for any   vectors $x,y\in \mathscr{H}$, where  $\left|A\right|=\left(A^*A\right)^{1/2}$.\\
 \end{lemma}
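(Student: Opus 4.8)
The plan is to reduce the statement to the ordinary Cauchy--Schwarz inequality after splitting the modulus $\left|A\right|$ according to the parameter $\alpha$, and then to transfer the factor acting on $y$ from $\left|A\right|$ to $\left|A^*\right|$ by means of the polar decomposition. First I would invoke the polar decomposition $A = U\left|A\right|$, where $U$ is the partial isometry whose initial space is the closure of the range of $\left|A\right|$. For any $x,y\in\mathscr{H}$ this gives
\begin{align*}
\left\langle Ax,y\right\rangle = \left\langle U\left|A\right|x,y\right\rangle = \left\langle \left|A\right|x,U^*y\right\rangle.
\end{align*}

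Next, since $\left|A\right|$ is positive, the fractional powers $\left|A\right|^{\alpha}$ and $\left|A\right|^{1-\alpha}$ are well defined by the continuous functional calculus, commute, are each self-adjoint, and satisfy $\left|A\right| = \left|A\right|^{\alpha}\left|A\right|^{1-\alpha}$. Distributing the two factors across the inner product, I would write $\left\langle Ax,y\right\rangle = \left\langle \left|A\right|^{\alpha}x,\left|A\right|^{1-\alpha}U^*y\right\rangle$, and then apply Cauchy--Schwarz to obtain
\begin{align*}
\left|\left\langle Ax,y\right\rangle\right|^2 \le \left\| \left|A\right|^{\alpha}x\right\|^2 \left\| \left|A\right|^{1-\alpha}U^*y\right\|^2 = \left\langle \left|A\right|^{2\alpha}x,x\right\rangle \left\langle U\left|A\right|^{2(1-\alpha)}U^*y,y\right\rangle,
\end{align*}
where in the last equality I have used $\left\| \left|A\right|^{\alpha}x\right\|^2 = \left\langle \left|A\right|^{2\alpha}x,x\right\rangle$ and moved $U^*$ to the other slot. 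The first factor already matches the right-hand side of \eqref{eq2.3}.

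The remaining and genuinely delicate step is to identify the second factor with $\left\langle \left|A^*\right|^{2(1-\alpha)}y,y\right\rangle$. Here I would use the transfer property of the polar decomposition: from $AA^* = U\left|A\right|^2U^*$ together with $U^*U\left|A\right| = \left|A\right|$ one checks that $\left|A^*\right| = U\left|A\right|U^*$, and more generally $f(\left|A^*\right|) = U\,f(\left|A\right|)\,U^*$ for every continuous $f$ with $f(0)=0$ (proved by polynomial approximation on the spectrum). Applying this to $f(t)=t^{2(1-\alpha)}$, which vanishes at $0$ whenever $\alpha<1$, yields $U\left|A\right|^{2(1-\alpha)}U^* = \left|A^*\right|^{2(1-\alpha)}$, and substituting this identity above gives exactly \eqref{eq2.3}. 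I expect this functional-calculus identity for the partial isometry to be the main obstacle, since it is the only place where the spectral structure of $A$ and the matching of the support projection of $U$ with the closed range of $\left|A\right|$ are actually used; the endpoint $\alpha=1$ is immediate, and the endpoint $\alpha=0$ follows either by continuity in $\alpha$ or by interpreting $\left|A\right|^{0}$ as the projection onto the closed range of $\left|A\right|$.
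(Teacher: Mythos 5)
Your proof is correct, but note that the paper itself gives no proof of this lemma: it is stated as a known result, the mixed Schwarz inequality of Kato \cite{TK}, so there is no internal argument to compare against, and your proposal supplies a genuine self-contained proof of what the paper only cites. Your route is the standard polar-decomposition argument: write $A=U\left|A\right|$, split $\left|A\right|=\left|A\right|^{\alpha}\left|A\right|^{1-\alpha}$, apply Cauchy--Schwarz to $\left\langle {\left|A\right|^{\alpha}x,\left|A\right|^{1-\alpha}U^{*}y} \right\rangle$, and then convert $U\left|A\right|^{2\left(1-\alpha\right)}U^{*}$ into $\left|A^{*}\right|^{2\left(1-\alpha\right)}$. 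The step you rightly flag as delicate is sound as you present it: from $U^{*}U\left|A\right|=\left|A\right|$ one gets $\left(U\left|A\right|U^{*}\right)^{k}=U\left|A\right|^{k}U^{*}$, hence the identity $f\left(\left|A^{*}\right|\right)=Uf\left(\left|A\right|\right)U^{*}$ for polynomials without constant term, and then for all continuous $f$ with $f\left(0\right)=0$ by uniform approximation on $\left[0,\left\|A\right\|\right]$; uniqueness of the positive square root gives $\left|A^{*}\right|=U\left|A\right|U^{*}$ along the way. Two minor remarks: your appeal to ``continuity in $\alpha$'' at the endpoint $\alpha=0$ is slightly misleading, since $\lim_{\alpha\to0^{+}}\left\langle {\left|A\right|^{2\alpha}x,x} \right\rangle$ equals $\left\langle {Px,x} \right\rangle$ with $P$ the support projection (not $\left\|x\right\|^{2}$) whenever $0$ lies in the spectrum of $\left|A\right|$ --- harmless here because the projection version is stronger than the stated one --- and in fact both endpoints $\alpha=0,1$ follow at once from ordinary Cauchy--Schwarz applied to $\left\langle {x,A^{*}y} \right\rangle$ and $\left\langle {Ax,y} \right\rangle$ respectively, so no limiting argument is needed at all.
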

In some of our results we need the following two fundamental norm
estimates, which  are:
\begin{align}
\label{eq2.4}\left\| {A+ B } \right\| \le \frac{1}{2}\left(
{\left\| A \right\| + \left\| B \right\| + \sqrt
	{\left( {\left\| A \right\| - \left\| B \right\|} \right)^2  +
		4\left\| {A^{1/2} B^{1/2} } \right\|^2 } } \right),
\end{align}
and
\begin{align}
\label{eq2.5}\left\| {A^{1/2} B^{1/2} } \right\|  \le\left\| {A  B
} \right\| ^{1/2}.
\end{align}
Both estimates are valid for all positive operators $A,B \in \mathscr{B}\left( \mathscr{H}\right)$. Also, it should be noted that \eqref{eq2.4} is sharper than the triangle inequality as pointed out by Kittaneh in \cite{FK3}. \\

In order to establish our main first result concerning the the   Davis--Wielandt radius, we need to recall the concept of Euclidean operator radius of an $n$-tuple operator; which was introduced by 
 Popsecu in \cite{P}. Namely, for an $n$-tuple $ {\bf{T}} = \left( {T_1 , \cdots ,T_n } \right) \in  \mathscr{B}\left(\mathscr{H}\right)^{n}:=\mathscr{B}\left(\mathscr{H}\right)\times \cdots \times \mathscr{B}\left(\mathscr{H}\right)$; i.e.,  for $T_1,\cdots,T_n \in \mathscr{B}\left(\mathscr{H}\right)$.
The Euclidean operator radius of $T_1,\cdots,T_n $  is defined   by
\begin{align}
\label{eq2.6}w_{\rm{e}} \left( {T_1 , \cdots ,T_n } \right): = \mathop {\sup }\limits_{\left\| x \right\| = 1} \left( {\sum\limits_{i = 1}^n {\left| {\left\langle {T_i x,x} \right\rangle } \right|^2 } } \right)^{1/2}\qquad\text{for all}\,\, x\in\mathscr{H}. \\\nonumber
\end{align}

The following properties of the Euclidean operator radius were proved in  \cite{P}, \cite{MKS} and \cite{SMS}:\\
\begin{enumerate}
	\item  $w_{\rm{e}} \left( {T_1 , \cdots ,T_n } \right)=0$ if and only if $T_k=0$ for each $k=1,\cdots,n$. \\
	
	\item  $w_{\rm{e}} \left( {\lambda T_1 , \cdots ,\lambda T_n } \right)= \left|\lambda\right| w_{\rm{e}} \left( {T_1 , \cdots ,T_n } \right)$. \\
	
	\item $w_{\rm{e}} \left( {A_1  + B_1 , \cdots ,A_n  + B_n } \right) \le w_{\rm{e}} \left( {A_1 , \cdots ,A_n } \right) + w_{\rm{e}} \left( {B_1 , \cdots ,B_n } \right)$.\\
	
	\item  $w_{\rm{e}} \left( {X^*T_1X , \cdots ,X^*T_nX } \right)= \left\|X\right\| w_{\rm{e}} \left( {T_1 , \cdots ,T_n } \right)$. \\

	\item $w_{\rm{e}} \left( {T_1 , \cdots ,T_n } \right)=w_{\rm{e}}\left( {T^*_1 , \cdots ,T^*_n } \right)$.\\
	
	\item $w_{\rm{e}} \left( {T^*_1T_1 , \cdots ,T^*_nT_n } \right)=w_{\rm{e}} \left( {T_1T^*_1 , \cdots ,T_nT^*_n } \right)$.\\
\end{enumerate}
for every $T_k,A_k,B_k,X\in \mathscr{B}\left(\mathscr{H}\right)$ $(1\le k \le n)$ and every scalar $\lambda\in \mathbb{C}$. \\

The Euclidean operator radius was generalized in \cite{MKS} as follows: 
\begin{align*} 
w_{p} \left( {T_1 , \cdots ,T_n } \right): = \mathop {\sup }\limits_{\left\| x \right\| = 1} \left( {\sum\limits_{i = 1}^n {\left| {\left\langle {T_i x,x} \right\rangle } \right|^p } } \right)^{1/p}, \qquad  p\ge1.
\end{align*} 
Clearly, for $p=2$ we refer to the Euclidean operator radius $w_{\rm{e}} \left( {\cdot,\ldots,\cdot} \right)$.\\

The following relation between the Euclidean operator radius $w_{\rm{e}} \left( {S , S^*S} \right)$ and  the   Davis--Wielandt radius 
$ dw\left(S\right)$ holds for every $S\in  \mathscr{B}\left(\mathscr{H}\right)$.
 \begin{lemma}
\label{lemma4} Let $S\in  \mathscr{B}\left(\mathscr{H}\right)$. Then
  \begin{align}
\label{eq2.7}w_{\rm{e}} \left( {S , S^*S} \right)= dw\left(S\right)
 \end{align}
 \end{lemma}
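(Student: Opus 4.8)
The plan is to unwind both definitions on a fixed unit vector and observe that they agree pointwise, so that the two suprema coincide. The key identity is that the second entry of the tuple $\left(S,S^*S\right)$ contributes exactly the quartic term $\left\|Sx\right\|^4$ that appears inside $dw\left(S\right)$.

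First I would fix a unit vector $x\in\mathscr{H}$ and evaluate the quadratic form attached to the positive operator $S^*S$, namely
\begin{align*}
\left\langle S^*S x, x\right\rangle = \left\langle Sx, Sx\right\rangle = \left\|Sx\right\|^2 .
\end{align*}
Since $\left\|Sx\right\|^2$ is a nonnegative real scalar, passing to its modulus and squaring gives
\begin{align*}
\left|\left\langle S^*S x, x\right\rangle\right|^2 = \left(\left\|Sx\right\|^2\right)^2 = \left\|Sx\right\|^4 .
\end{align*}
Next I would substitute this into the definition \eqref{eq2.6} of the Euclidean operator radius in the case $n=2$, $T_1=S$, $T_2=S^*S$, obtaining
\begin{align*}
w_{\rm{e}}\left(S, S^*S\right) = \mathop{\sup}\limits_{\left\|x\right\|=1} \sqrt{\left|\left\langle Sx, x\right\rangle\right|^2 + \left|\left\langle S^*S x, x\right\rangle\right|^2} = \mathop{\sup}\limits_{\left\|x\right\|=1} \sqrt{\left|\left\langle Sx, x\right\rangle\right|^2 + \left\|Sx\right\|^4},
\end{align*}
and the right-hand side is precisely $dw\left(S\right)$ by definition, which establishes \eqref{eq2.7}.

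There is no substantial obstacle here: equation \eqref{eq2.7} follows immediately from unwinding the two definitions. The only point meriting attention is that $S^*S$ is positive, so that $\left\langle S^*S x, x\right\rangle=\left\|Sx\right\|^2$ is a nonnegative real scalar whose modulus squared is exactly $\left\|Sx\right\|^4$ with no residual phase. Because the equality of the two radicands holds for every individual unit vector $x$, the suprema defining $w_{\rm{e}}\left(S, S^*S\right)$ and $dw\left(S\right)$ agree term by term, and no limiting or approximation argument is needed.
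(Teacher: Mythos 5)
Your proof is correct and follows exactly the paper's own route: specialize the Euclidean operator radius definition to the pair $\left(S, S^*S\right)$ and use the identity $\left\langle S^*Sx,x\right\rangle = \left\|Sx\right\|^2$ to recognize the Davis--Wielandt radius. The only difference is cosmetic — you spell out this identity and the nonnegativity of $\left\langle S^*Sx,x\right\rangle$ explicitly, whereas the paper uses them tacitly in a single chain of equalities.
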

\begin{proof}
Setting $n=2$, $T_1=S$ and $T_2=S^*S$\,\,\,  $\left(S\in \mathscr{B}\left(\mathscr{H}\right)\right)$ in \eqref{eq2.6}, we have
\begin{align*}
w_{\rm{e}} \left( {S , S^*S} \right) := \mathop {\sup }\limits_{\left\| x \right\| = 1} \left( { \left| {\left\langle {S x,x} \right\rangle } \right|^2   +\left| {\left\langle {S^*S x,x} \right\rangle } \right|^2 } \right)^{1/2}
&= \mathop {\sup }\limits_{\left\| x \right\| = 1} \left\{ {\sqrt {\left| {\left\langle {Sx,x} \right\rangle } \right|^2  + \left\| {Sx} \right\|^4 } } \right\}
\\
&= dw\left(S\right),
\end{align*}
which gives the Davis--Wielandt radius of $S$, as required.
\end{proof}
 \begin{theorem}
 	\label{thm2}Let $S\in \mathscr{B}\left(\mathscr{H}\right)$. Then $dw\left(S\right)=  \sqrt{2} w\left(S\right)$  if and only if $S$ is selfadjoint idempotent operator.
 \end{theorem}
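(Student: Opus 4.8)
The plan is to split the equivalence into its two implications, with sufficiency being a one-line computation and necessity carrying all the weight.

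For sufficiency, suppose $S$ is a selfadjoint idempotent, i.e. $S=S^*=S^2$. Then for every unit vector $x$,
\[
\|Sx\|^2=\langle Sx,Sx\rangle=\langle S^*Sx,x\rangle=\langle S^2x,x\rangle=\langle Sx,x\rangle,
\]
so $\langle Sx,x\rangle$ is real and nonnegative and $\|Sx\|^4=\langle Sx,x\rangle^2=|\langle Sx,x\rangle|^2$. Hence $|\langle Sx,x\rangle|^2+\|Sx\|^4=2\,|\langle Sx,x\rangle|^2$ for every unit $x$, and taking the supremum over $\|x\|=1$ gives $dw^2(S)=2w^2(S)$, that is, $dw(S)=\sqrt2\,w(S)$.

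For necessity, assume $dw(S)=\sqrt2\,w(S)$ and write $w=w(S)$. First I would extract scalar constraints from \eqref{eq1.1}. The upper bound $dw(S)\le\sqrt{w^2+\|S\|^4}$ gives $2w^2\le w^2+\|S\|^4$, i.e. $w\le\|S\|^2$, while the lower bound $dw(S)\ge\|S\|^2$ gives $\|S\|^2\le\sqrt2\,w$. Moreover, choosing unit vectors $x_n$ with $|\langle Sx_n,x_n\rangle|\to w$ and using $\|Sx_n\|\ge|\langle Sx_n,x_n\rangle|$ yields $dw^2(S)\ge w^2+w^4$, whence $2w^2\ge w^2+w^4$ and therefore $w\le1$. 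Thus $w\le\|S\|^2\le\sqrt2\,w$ together with $w\le1$. The goal of the remaining argument is to sharpen these to $w=\|S\|=1$ and then to promote them to the operator identity $S=S^*=S^2$. To reach $w=\|S\|$ I would try to invoke Corollary \ref{cor1}, whose condition $S^*S\le w^2(S)1_{\mathscr H}$ (combined with $w=1$) would force $S^*S\le1_{\mathscr H}$; one would then analyze a maximizing sequence, using the equality case of Cauchy--Schwarz (so that $Sx_n-\langle Sx_n,x_n\rangle x_n\to0$) and the mixed Schwarz inequality of Lemma \ref{lemma3}, to push the extremal values of $\langle Sx,x\rangle$ into $\{0,1\}$ and to show $S$ acts as the identity on the associated subspace.

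The main obstacle is precisely this final step of the converse: passing from scalar/extremal data to the full operator identity $S=S^*=S^2$. The bounds in \eqref{eq1.1} are controlled only by \emph{near-extremal} vectors, so the equality $dw(S)=\sqrt2\,w(S)$ a priori pins down $S$ only on the part of $\mathscr H$ where $|\langle Sx,x\rangle|$ and $\|Sx\|$ are simultaneously close to their maxima; propagating idempotency and selfadjointness to all of $\mathscr H$ does not follow from the sandwich estimates alone. I would expect to need a genuine equality analysis of \eqref{eq2.4}--\eqref{eq2.5} or a spectral/normality reduction to close this gap, and this is the step I would scrutinize most carefully, since it is here that the characterization is delicate and where any additional hypotheses would have to be used.
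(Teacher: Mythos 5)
Your sufficiency direction (selfadjoint idempotent $\Rightarrow dw(S)=\sqrt{2}\,w(S)$) is correct, and it is essentially the paper's own argument in different packaging: the paper writes $dw(S)=w_{\rm e}\left(S,S^*S\right)=w_{\rm e}\left(S,S\right)=\sqrt{2}\,w(S)$ via Lemma \ref{lemma4}, which is exactly your pointwise identity $\left\|Sx\right\|^4=\left|\left\langle Sx,x\right\rangle\right|^2$ followed by taking suprema.

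The genuine gap is the necessity direction, which you do not prove: after the (correct) scalar constraints $w(S)\le\left\|S\right\|^2\le\sqrt{2}\,w(S)$ and $w(S)\le 1$, you only outline a plan for promoting them to the operator identity $S=S^*=S^2$, and you flag that promotion as the step needing scrutiny. Your suspicion is exactly right, and in fact no argument can close it, because this direction of the theorem is false. Take $S=i\,1_{\mathscr{H}}$ (or $S=-1_{\mathscr{H}}$, or any $e^{i\theta}1_{\mathscr{H}}$): then $\left|\left\langle Sx,x\right\rangle\right|=1$ and $\left\|Sx\right\|=1$ for every unit vector $x$, so $dw(S)=\sqrt{2}=\sqrt{2}\,w(S)$, yet $S$ is not a selfadjoint idempotent ($S^2=-1_{\mathscr{H}}\ne S$ in the first case; in the second, $S$ is selfadjoint but $S^2=1_{\mathscr{H}}\ne S$). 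These examples satisfy all of your scalar constraints, and even your hoped-for sharpening $w(S)=\left\|S\right\|=1$, which shows that extremal-vector data of this kind can never isolate the selfadjoint idempotents. For comparison, the paper's own treatment of this direction is the single remark that ``$S^*S=S^2$ if and only if $S$ is selfadjoint, and therefore $S^*S=S$ when $S$ is idempotent,'' which never connects the hypothesis $dw(S)=\sqrt{2}\,w(S)$ to any operator equation such as $S^*S=S$; so the paper does not fill this gap either. What you identified is a defect of the theorem itself, not merely of your attempt.
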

 \begin{proof}
 To prove the `only if part', from Lemma \ref{lemma4}, we have $w_{\rm{e}} \left( {S , S^*S} \right)= dw\left(S\right)$ for any $S\in \mathscr{B}\left(\mathscr{H}\right)$. Clearly if $S$ is selfadjoint idempotent operator, then  $dw\left(S\right)=w_{\rm{e}} \left( {S , S^*S} \right)=w_{\rm{e}} \left( {S , S^2} \right)=w_{\rm{e}} \left( {S , S} \right)$. On the other hand, by setting $n=2$ and $T_1=T_2=S$, in \eqref{eq2.6}, we get 
 	$w_{\rm{e}} \left( {S , S} \right)= \sqrt{2} w\left(S\right)$. Hence $dw\left(S\right)=  \sqrt{2} w\left(S\right)$. The `if part' follows by noting that, $S^*S=S^2$ if and only if $S$ is selfadjoint and therefore $S^*S=S$, when $S$ is an idempotent operator, i.e., $S^2=S$.
 \end{proof}

 It's well-known that if $S\in \mathscr{B}\left(\mathscr{H}\right)$ is selfadjoint operator, then $\left\|S\right\|=w\left(S\right)$. Thus, according to Theorem \ref{thm2}, the equality  $\left\|S\right\|=w\left(S\right)=\frac{1}{\sqrt{2}}dw\left(S\right)$, holds when $S$ is selfadjoint idempotent operator. For example, take $ S = \left[ {\begin{array}{*{20}c}
 	1 & 0  \\
 	0 & 0  \\
 	\end{array}} \right]$. Therefore, we have $\left\|S\right\|=w\left(S\right)=1=\frac{1}{\sqrt{2}}dw\left(S\right)$. 
 
 It's convenient to note that,  Kittaneh \cite{K1} proved that if $S^2=0$, then $w\left(S\right)=\frac{1}{2}\left\|S\right\|$ for all $S\in \mathscr{B}\left(\mathscr{H}\right)$ with $\dim\left(\mathscr{H}\right)\le 2$. But it is not possible to have $dw\left(S\right)=\sqrt{2}w\left(S\right)=\frac{\sqrt{2}}{2}\left\|S\right\|$, because the first equality holds when $S$ is selfadjoint idempotent operator, which in turn implies that $w\left(S\right)=\left\|S\right\|$; hence, we have $dw\left(S\right)=\sqrt{2}\left\|S\right\|=\frac{\sqrt{2}}{2}\left\|S\right\|$, which of course impossible. Furthermore, one can show if $S\in \mathscr{B}\left(\mathscr{H}\right)$ such that $S^2=0$, then $S$ is not selfadjoint operator; except the zero operator.

In 2005, Kittaneh \cite{K2} proved that
\begin{align}
\frac{1}{4}\|S^*S+SS^*\|\le  w^2\left(S\right) \le \frac{1}{2}\|S^*S+SS^*\|\label{eq2.8}    
\end{align}
for Hilbert space operator $S\in \mathscr{B}\left(\mathscr{H}\right)$.  These inequalities were also reformulated and generalized in \cite{EF} but in terms of the Cartesian decomposition.\\

The following result extends \eqref{eq2.8} for the generalized Euclidean operator radius.   
\begin{lemma}
\label{lemma5}	Let $T_k\in \mathscr{B}\left(\mathscr{H}\right)$ $(k=1,\cdots,n)$. Then	
	\begin{align}
	\label{eq2.9} \frac{1}{{2^{p+1}n^{p - 1} }}\left\| {\sum\limits_{k = 1}^n {T_k^* T_k  + T_k T_k^* } } \right\|^p \le w^{2p}_{2p} \left( {T_1 , \cdots ,T_n } \right) \le \frac{1}{{2^p }}\left\| {\sum\limits_{k = 1}^n {\left( {T_k^* T_k  + T_k T_k^* } \right)^p } } \right\|
	\end{align}
	for all $p\ge1$.
\end{lemma}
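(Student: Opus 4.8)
\emph{Upper bound.} The plan is to reduce the whole inequality to a one-vector, one-index estimate and then sum. Fix a unit vector $x$ and an index $k$, and write $G_k:=T_k^*T_k+T_kT_k^*\ge 0$. Applying the mixed Schwarz inequality \eqref{eq2.3} with $y=x$ and $\alpha=\tfrac12$, followed by the elementary $ab\le\tfrac12(a^2+b^2)$ and Lemma \ref{lemma2} (with $p=2$), I obtain
\begin{align*}
\left|\left\langle T_kx,x\right\rangle\right|^2
\le \left\langle \left|T_k\right|x,x\right\rangle\left\langle\left|T_k^*\right|x,x\right\rangle
\le \tfrac12\left(\left\langle\left|T_k\right|^2x,x\right\rangle+\left\langle\left|T_k^*\right|^2x,x\right\rangle\right)
=\tfrac12\left\langle G_kx,x\right\rangle .
\end{align*}
Raising to the power $p\ge1$ and then invoking Lemma \ref{lemma2} once more for the positive operator $G_k$ gives $\left|\left\langle T_kx,x\right\rangle\right|^{2p}\le 2^{-p}\left\langle G_k^{\,p}x,x\right\rangle$. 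Summing over $k$, bounding $\left\langle\sum_k G_k^{\,p}x,x\right\rangle\le\left\|\sum_k G_k^{\,p}\right\|$, and taking the supremum over unit vectors $x$ yields the right-hand estimate. This direction is routine.

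\emph{Lower bound.} Here I would split the work into two reductions. First, by the power-mean (convexity) inequality of Lemma \ref{lemma1} in its $n$-variable form, applied to $a_k=\left|\left\langle T_kx,x\right\rangle\right|^2$, one has $\sum_k a_k^{\,p}\ge n^{1-p}\bigl(\sum_k a_k\bigr)^{p}$; taking the supremum over unit $x$ converts this into
\begin{align*}
w_{2p}^{2p}\left(T_1,\dots,T_n\right)\ \ge\ n^{1-p}\,w_{\mathrm e}^{2p}\left(T_1,\dots,T_n\right).
\end{align*}
Second, I would establish a Kittaneh-type lower bound for the Euclidean radius by the rotation-and-average method underlying \eqref{eq2.8}. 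Setting $A_{k,\theta}=\mathrm{Re}\,(e^{i\theta}T_k)$, self-adjointness gives $\left\|A_{k,\theta}\right\|=\sup_{\|x\|=1}\left|\mathrm{Re}\,(e^{i\theta}\left\langle T_kx,x\right\rangle)\right|\le w(T_k)\le w_{\mathrm e}$, hence $A_{k,\theta}^2\le w^2(T_k)\,1_{\mathscr H}$. Summing in $k$ and integrating against $\tfrac1{2\pi}\,d\theta$ over $[0,2\pi)$, together with the identity $\tfrac1{2\pi}\int_0^{2\pi}A_{k,\theta}^2\,d\theta=\tfrac14 G_k$, collapses this to $\tfrac14\sum_k G_k\le\bigl(\sum_k w^2(T_k)\bigr)1_{\mathscr H}$. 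Combining the two reductions and collecting the powers of $2$ and $n$ then produces a lower bound of the claimed shape.

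\emph{Main obstacle.} I expect the entire difficulty to sit in the constant of the lower bound. The averaging step sees no cancellation between the different $A_{k,\theta}$, so it only delivers $\tfrac14\sum_k G_k\le\bigl(\sum_k w^2(T_k)\bigr)1_{\mathscr H}\le n\,w_{\mathrm e}^2\,1_{\mathscr H}$; that is, a genuine factor $n$ enters here in addition to the $n^{1-p}$ coming from the power mean, so the natural route yields $w_{\mathrm e}^2\ge\tfrac1{4n}\left\|\sum_k G_k\right\|$ and hence a constant of order $\tfrac1{2^{2p}n^{2p-1}}$. A weighted shift such as $T_1=E_{12},\,T_2=E_{23}$ on $\mathbb{C}^3$ (for which $\left\|\sum_k G_k\right\|=2$ while $w_{\mathrm e}^2=\tfrac14$) shows the extra factor of $n$ in the Euclidean estimate cannot be removed. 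The careful bookkeeping of these two independent sources of $n$, and of the base-$2$ constant, is therefore the delicate point, and I would pin down and verify the final constant against that extremal example before asserting it; this is where I would concentrate the rigor.
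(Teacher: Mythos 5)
Your upper-bound argument is correct, and it takes a genuinely different route from the paper's: the paper works with the Cartesian decomposition $T_k=P_k+iQ_k$ and estimates $\left(\langle P_kx,x\rangle^2+\langle Q_kx,x\rangle^2\right)^p\le\langle(P_k^2+Q_k^2)x,x\rangle^p\le\langle(P_k^2+Q_k^2)^px,x\rangle$, whereas you arrive at the same operators $G_k=T_k^*T_k+T_kT_k^*$ via the mixed Schwarz inequality \eqref{eq2.3}, the scalar bound $ab\le\tfrac12(a^2+b^2)$, and two applications of Lemma \ref{lemma2}. Both are valid; yours bypasses the Cartesian decomposition entirely.

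On the lower bound, your hesitation is not a defect of your method: the inequality you were asked to prove is false, and the example you produced refutes it. For $p=1$, $n=2$, $T_1=E_{12}$, $T_2=E_{23}$ on $\mathbb{C}^3$ (matrix units), one gets $\sum_k(T_k^*T_k+T_kT_k^*)=\diag(1,2,1)$, so the left-hand side of \eqref{eq2.9} equals $\tfrac14\cdot2=\tfrac12$, whereas $w_2^2(T_1,T_2)=\sup_{\|x\|=1}|x_2|^2\left(|x_1|^2+|x_3|^2\right)=\tfrac14$. The lemma even fails for $n=1$ once $p>1$: for $T=E_{12}$ on $\mathbb{C}^2$ it asserts $2^{-(p+1)}\le w^{2p}(T)=2^{-2p}$, i.e.\ $p\le1$. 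The paper's own proof of the left inequality breaks in two identifiable places: it asserts the equality $\sup_{\|x\|=1}\bigl(\sum_k\langle(P_k\pm Q_k)x,x\rangle^2\bigr)^p=\bigl\|\sum_k(P_k\pm Q_k)^2\bigr\|^p$, which identifies the Euclidean operator radius of the self-adjoint tuple $(P_k\pm Q_k)_k$ with the norm of the sum of squares --- true for $n=1$, but only ``$\le$'' holds in general; and, for $p>1$, it uses $\|A\|^p+\|B\|^p\ge\|A+B\|^p$, which is backwards (take $A=B$). By contrast, your rotation-averaging bound $w_{2p}^{2p}\ge\frac{1}{2^{2p}n^{2p-1}}\bigl\|\sum_kG_k\bigr\|^p$ is correct as you derived it, and it is attained with equality in both examples above, so your constant --- not the paper's --- is the right one. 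The only thing missing from your write-up is the explicit observation that your example does not merely block the natural route: it disproves the stated lemma, so what you have is a correction of the statement rather than an incomplete proof of it.
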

\begin{proof}
	Let $P_k+iQ_k$ be the Cartesian decomposition of $T_k$ for all $k=1,\cdots,n$.
	As in the proof of \eqref{eq2.8} in \cite{K2}, we have
	\begin{align*}
	\left| {\left\langle {T_k x,x} \right\rangle } \right|^{2p}   = \left( {\left\langle {P_k x,x} \right\rangle ^2  + \left\langle {Q_k x,x} \right\rangle ^2 } \right)^p    
	 \ge \frac{1}{{2^p }}\left( {\left| {\left\langle {P_k x,x} \right\rangle } \right| + \left| {\left\langle {Q_k x,x} \right\rangle } \right|} \right)^{2p}    
	&\ge \frac{1}{{2^p }}\left| {\left\langle {P_k x,x} \right\rangle  + \left\langle {Q_k x,x} \right\rangle } \right|^{2p}  \\ 
	&= \frac{1}{{2^p }}\left| {\left\langle {P_k  \pm Q_k x,x} \right\rangle } \right|^{2p}.    
	\end{align*} 
	Summing over $k$ and then taking the supremum over all unit vector $x\in \mathscr{H}$, we get
	\begin{align*}
	w^{2p}_{2p} \left( {T_1 , \cdots ,T_n } \right)=\mathop {\sup }\limits_{\left\| x \right\| = 1} \sum\limits_{k = 1}^n {\left| {\left\langle {T_k x,x} \right\rangle } \right|^{2p}} 
	 &\ge \frac{1}{{2^p }}\mathop {\sup }\limits_{\left\| x \right\| = 1} \sum\limits_{k = 1}^n {\left| {\left\langle {P_k  \pm Q_k x,x} \right\rangle } \right|^{2p} }  
	\\
	&\ge \frac{1}{{2^p }}\frac{1}{{n^{p - 1} }}\mathop {\sup }\limits_{\left\| x \right\| = 1} \left( {\sum\limits_{k = 1}^n {\left| {\left\langle {P_k  \pm Q_k x,x} \right\rangle } \right|^2 } } \right)^p  \\ 
	&= \frac{1}{{2^p }}\frac{1}{{n^{p - 1} }}\left\| {\sum\limits_{k = 1}^n {\left( {P_k  \pm Q_k } \right)^2 } } \right\|^p,  
	\end{align*}
	where we have used the Jensen's inequality in the last inequality. Thus,
	\begin{align*}
	2w^{2p}_{2p} \left( {T_1 , \cdots ,T_n } \right) &\ge \frac{1}{{2^p }}\frac{1}{{n^{p - 1} }}\left\| {\sum\limits_{k = 1}^n {\left( {P_k  + Q_k } \right)^2 } } \right\|^p  + \frac{1}{{2^p }}\frac{1}{{n^{p - 1} }}\left\| {\sum\limits_{k = 1}^n {\left( {P_k  - Q_k } \right)^2 } } \right\|^p  \\ 
	&\ge \frac{1}{{2^p }}\frac{1}{{n^{p - 1} }}\left\| {\sum\limits_{k = 1}^n {\left( {P_k  + Q_k } \right)^2 }  + \sum\limits_{k = 1}^n {\left( {P_k  - Q_k } \right)^2 } } \right\|^p  \\ 
	&= \frac{1}{{2^p }}\frac{1}{{n^{p - 1} }}\left\| {\sum\limits_{k = 1}^n {\left\{ {\left( {P_k  + Q_k } \right)^2  + \left( {P_k  - Q_k } \right)^2 } \right\}} } \right\|^p  \\ 
	&=  \frac{1}{{n^{p - 1} }}\left\| {\sum\limits_{k = 1}^n {P_k^2  + Q_k^2 } } \right\|^p \\  
	 &=     \frac{1}{{n^{p - 1} }}\left\| {\sum\limits_{k = 1}^n {\frac{T_k^* T_k  + T_k T_k^*}{2} } } \right\|^p\\
	 &=   \frac{1}{{2^pn^{p - 1} }}\left\| {\sum\limits_{k = 1}^n { T_k^* T_k  + T_k T_k^*  } } \right\|^p,  
	\end{align*}
	and hence,
	\begin{align*}
	w^{2p}_{2p} \left( {T_1 , \cdots ,T_n } \right)   \ge    \frac{1}{{2^{p+1}n^{p - 1} }}\left\| {\sum\limits_{k = 1}^n {T_k^* T_k  + T_k T_k^* } } \right\|^p,  
	\end{align*}
	which proves the left hand side of the inequality in \eqref{eq2.9}.

	To prove the second inequality, for every unit vector $x\in \mathscr{H}$ we have
	\begin{align*}
	\sum\limits_{k = 1}^n {\left| {\left\langle {T_k x,x} \right\rangle } \right|^{2p} }   = \sum\limits_{k = 1}^n {\left( {\left\langle {P_k x,x} \right\rangle ^2  + \left\langle {Q_k x,x} \right\rangle ^2 } \right)^p }    
	&\le \sum\limits_{k = 1}^n {\left( {\left\langle {P_k^2 x,x} \right\rangle  + \left\langle {Q_k^2 x,x} \right\rangle } \right)^p }  \\ 
	&= \sum\limits_{k = 1}^n {\left\langle {\left( {P_k^2  + Q_k^2 } \right)x,x} \right\rangle ^p },
	\end{align*}
	which implies that 
	\begin{align*} 
	\mathop {\sup }\limits_{\left\| x \right\| = 1}\sum\limits_{k = 1}^n {\left| {\left\langle {T_k x,x} \right\rangle } \right|^{2p} } =w_{2p}^{2p} \left( {T_1 , \cdots ,T_1 } \right)&\le \mathop {\sup }\limits_{\left\| x \right\| = 1} \sum\limits_{k = 1}^n {\left\langle {\left( {P_k^2  + Q_k^2 } \right)x,x} \right\rangle ^p }  
	\\
	&= \left\| {\sum\limits_{k = 1}^n {\left( {P_k^2  + Q_k^2 } \right)^p } } \right\| 
 = \frac{1}{{2^p }}\left\| {\sum\limits_{k = 1}^n {\left( {T_k^* T_k  + T_k T_k^* } \right)^p } } \right\|,
	\end{align*}
	which proves the right hand side of \eqref{eq2.9}.
\end{proof}
\begin{remark}
	In particular, setting $n=2$ and $p=1$ in \eqref{eq2.9} we get
	\begin{align*}
	\frac{1}{4} \left\| {  {T_1^* T_1  + T_1 T_1^* +T_2^* T_2 + T_2 T_2^* } } \right\|  &\le w^2_{\rm{e}} \left( {T_1,T_2} \right) 
	\\
	&\le \frac{1}{{2  }}\left\| {T_1^* T_1  + T_1 T_1^* +T_2^* T_2 + T_2 T_2^*   } \right\|.
	\end{align*}
	Moreover, if we choose $T_1=T_2=T$, then
	\begin{align*}
	\frac{1}{2} \left\| {  {T^* T  + T T^*  } } \right\|   \le w^2_{\rm{e}} \left( {T,T} \right) 
	\le  \left\| {T^* T  + T T^*} \right\|.
	\end{align*}
	but $w_{\rm{e}} \left( {T,T} \right)=\sqrt{2} w\left(T\right)$, thus the last inequality above reduces to the Kittaneh inequality \eqref{eq2.8}.
\end{remark}
Now, based on Lemmas \ref{lemma4} and \ref{lemma5}, we can introduce our first main result, as follows:
\begin{theorem}
\label{thm3}	Let $S\in  \mathscr{B}\left(\mathscr{H}\right)$. Then
\begin{align}
\label{eq2.10}\frac{1}{4} \left\| {  {\left|S\right|^2 +\left|S^*\right|^2 + 2\left|S\right|^4   } } \right\|   \le  dw^2\left(S\right) 
\le \frac{1}{{2  }}\left\| {\left|S\right|^2 +\left|S^*\right|^2 +2\left|S\right|^4     } \right\|.
\end{align} 
\end{theorem}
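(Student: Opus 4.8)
The plan is to apply Lemma \ref{lemma5} with the specific choice $n=2$, $p=1$, $T_1=S$ and $T_2=S^*S$, and then convert the resulting Euclidean operator radius back into the Davis--Wielandt radius through Lemma \ref{lemma4}. First I would substitute $p=1$ and $n=2$ into \eqref{eq2.9}. The two constants collapse cleanly: the left constant $\frac{1}{2^{p+1}n^{p-1}}$ becomes $\frac{1}{2^{2}\cdot 2^{0}}=\frac14$, the right constant $\frac{1}{2^{p}}$ becomes $\frac12$, and because $p=1$ there is no outer power, so both sides of \eqref{eq2.9} reduce to the norm of the single operator sum $\sum_{k=1}^{2}\left(T_k^*T_k+T_kT_k^*\right)$.

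The key computation is to evaluate the four summands for this substitution. For $T_1=S$ one gets immediately $T_1^*T_1=|S|^2$ and $T_1T_1^*=|S^*|^2$. For $T_2=S^*S$ I would use that $S^*S$ is positive, hence self-adjoint, so that $T_2^*=T_2$; consequently $T_2^*T_2=T_2T_2^*=(S^*S)^2=|S|^4$. Adding the contributions yields
\begin{align*}
\sum_{k=1}^{2}\left(T_k^*T_k+T_kT_k^*\right)=|S|^2+|S^*|^2+2|S|^4,
\end{align*}
which is exactly the operator whose norm appears on both sides of \eqref{eq2.10}.

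Finally, the middle quantity $w_{2p}^{2p}\left(T_1,T_2\right)$ in \eqref{eq2.9} becomes $w_{\rm e}^2\left(S,S^*S\right)$ at $p=1$, since $w_2=w_{\rm e}$; and by Lemma \ref{lemma4} this equals $dw^2\left(S\right)$. Substituting the computed sum and this identification into the specialized \eqref{eq2.9} produces precisely \eqref{eq2.10}. I do not expect a genuine obstacle, as the argument is a direct specialization of Lemma \ref{lemma5}; the only point requiring a moment's care is recognizing that squaring the self-adjoint operator $S^*S$ gives $|S|^4$, so that the two $T_2$-terms each contribute $|S|^4$ and together account for the coefficient $2$ in front of $|S|^4$ in both bounds.
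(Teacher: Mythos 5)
Your proposal is correct and coincides with the paper's own proof: the paper likewise specializes Lemma \ref{lemma5} to $n=2$, $p=1$ with $T_1=S$, $T_2=S^*S$, computes the same sum $|S|^2+|S^*|^2+2|S|^4$, and invokes Lemma \ref{lemma4} to identify $w_{\rm e}^2(S,S^*S)$ with $dw^2(S)$. No gaps; your explicit remark that $(S^*S)^2=|S|^4$ accounts for the factor $2$ is exactly the implicit step in the paper's argument.
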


\begin{proof}
Setting $n=2$, $p=1$, $T_1=A$ and $T_2=B$ in \eqref{eq2.9}, we get
\begin{align*}
\frac{1}{4} \left\| {  {A^* A  + A A^* +B^* B + B B^* } } \right\|  &\le w^2_{\rm{e}} \left( {A,B} \right) 
\\
&\le \frac{1}{{2  }}\left\| {A^* A  + A A^* + B^*B + B B^*    } \right\|.
\end{align*}
Replacing $A$ by $S$ and $B$ by $S^*S$, we get
\begin{align*}
\frac{1}{4} \left\| {  {S^* S  +SS^* + 2\left|S\right|^4   } } \right\|  \le w^2_{\rm{e}} \left( {S,S^*S} \right) 
\le \frac{1}{{2  }}\left\| {S^* S  + S S^* +2\left|S\right|^4     } \right\|.
\end{align*} 
But as we have shown   in Lemma \ref{lemma4} that , $w_{\rm{e}} \left( {S , S^*S} \right)= dw\left(S\right)$, hence we have
\begin{align*}
\frac{1}{4} \left\| {  {\left|S\right|^2 +\left|S^*\right|^2 + 2\left|S\right|^4   } } \right\|   \le  dw^2\left(S\right) 
\le \frac{1}{{2  }}\left\| {\left|S\right|^2 +\left|S^*\right|^2 +2\left|S\right|^4     } \right\|,
\end{align*} 
as desired.
\end{proof}

To see that the second inequality in \eqref{eq2.10} is a refinement of the second inequality in \eqref{eq1.1}, assume  
 $SS^*\le  S^*S\le w^2\left(S\right)1_{\mathscr{H}}$. Thus, from \eqref{eq2.10} we have
\begin{align*}
dw^2\left(S\right)  \le \frac{1}{2} \left\| {  {\left|S\right|^2 +\left|S^*\right|^2 + 2\left|S\right|^4   } } \right\| &\le \frac{1}{2} \left\| {  {w^2\left(S\right)1_{\mathscr{H}} +w^2\left(S\right)1_{\mathscr{H}}+ 2w^4\left(S\right)1_{\mathscr{H}}   } } \right\|  
\\
&\le      w^2  \left(S\right)+\left\|S\right\|^4,   
\end{align*}
follows  by assumption, since $w\left(S\right)=\left\|S\right\|$ (see Corollary \ref{cor1}), which implies that
\begin{align*}
dw \left(S\right)  \le \sqrt{ \frac{1}{2} \left\| {  {\left|S\right|^2 +\left|S^*\right|^2 + 2\left|S\right|^4   } } \right\|} \le \sqrt{w^2  \left(S\right)+\left\|S\right\|^4 }= \left\|S\right\|    \sqrt{1  +  \left\|S\right\|^2 },
\end{align*}
which means that the right-hand side of \eqref{eq2.10} refines the right-hand side of \eqref{eq1.1}.\\

\begin{example}
\label{example1}	Let $S = \left[ {\begin{array}{*{20}c}
		0 & 1  \\
		2& 1  \\
		\end{array}} \right]$. We have  $\|S\|=2.28825$ and $w\left(S\right)=2.08114$. The upper bound of \eqref{eq1.1} gives $dw\left(S\right) \le  5.63449$. However, by applying \eqref{eq2.10}, we have $dw\left(S\right) \le 5.61938$, which implies that, the upper bound in \eqref{eq2.10} is better than the upper bound in \eqref{eq1.1}. 
\end{example}
 
 \begin{remark}
Following the same approach considred in the proof of Theorem \ref{thm3}, another interesting inequalities could be deduced from the obtained inequalities in   \cite{P}, \cite{MKS}, and \cite{SMS}.
 \end{remark}

The following result refines sharply the upper bound in \eqref{eq1.1}. 
 \begin{theorem}
 \label{thm4}	If $S\in \mathscr{B}\left(\mathscr{H}\right)$, then
 	\begin{align}
 \label{eq2.11} \frac{1}{{\sqrt 2 }}\left\|S+S^* S\right\|\le  dw\left( S \right)   \le  \sqrt{\left\|
   	{  \frac{1}{4}\left( \left| S \right|   + \left| {S^* } \right|  \right)^2 +  \left| {  S} \right|^4  }\right\|}
   \le   \sqrt{\frac{1}{4}\left( {  \left\|  S \right\|+\left\|  S^2 \right\|^{1/2} }\right)^2  +  \left\| S \right\|^4}.  
   	\end{align}	
 \end{theorem}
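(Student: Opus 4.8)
The plan is to pass everything through the Euclidean-radius identity $dw(S) = w_{\rm{e}}(S, S^*S)$ of Lemma \ref{lemma4} and to prove the three inequalities in \eqref{eq2.11} one at a time. For the two upper estimates I would dominate the scalar $|\langle Sx,x\rangle|^2 + \|Sx\|^4$ on unit vectors by the quadratic form of a single positive operator and then take the supremum, using that $\sup_{\|x\|=1}\langle Px,x\rangle = \|P\|$ for $P\ge 0$. For the lower estimate I would invoke the elementary inequality $\frac12|z+r|^2 \le |z|^2 + r^2$, valid for $z\in\mathbb{C}$ and $r\ge 0$.

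For the central inequality, fix a unit vector $x$. The mixed Schwarz inequality (Lemma \ref{lemma3} with $y=x$ and $\alpha=\frac12$) combined with the arithmetic--geometric mean inequality gives Kittaneh's bound $|\langle Sx,x\rangle| \le \frac12\langle(|S|+|S^*|)x,x\rangle$. Squaring and applying Lemma \ref{lemma2} with $p=2$ to the positive operator $|S|+|S^*|$ yields $|\langle Sx,x\rangle|^2 \le \frac14\langle(|S|+|S^*|)^2 x,x\rangle$, while a second use of Lemma \ref{lemma2} with $p=2$ applied to $|S|^2$ gives $\|Sx\|^4 = \langle |S|^2 x,x\rangle^2 \le \langle|S|^4 x,x\rangle$. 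Adding these and taking the supremum over unit $x$ produces precisely $dw^2(S) \le \|\frac14(|S|+|S^*|)^2 + |S|^4\|$.

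The rightmost inequality is then a purely scalar simplification. By the triangle inequality, together with $\|(|S|+|S^*|)^2\| = \||S|+|S^*|\|^2$ and $\||S|^4\| = \|S\|^4$, the middle quantity is at most $\frac14\||S|+|S^*|\|^2 + \|S\|^4$. I would bound $\||S|+|S^*|\|$ by the refined triangle inequality \eqref{eq2.4} with $A=|S|$ and $B=|S^*|$: since $\||S|\| = \||S^*|\| = \|S\|$, the radical collapses and \eqref{eq2.4} reduces to $\||S|+|S^*|\| \le \|S\| + \||S|^{1/2}|S^*|^{1/2}\|$. Finally \eqref{eq2.5} gives $\||S|^{1/2}|S^*|^{1/2}\| \le \||S||S^*|\|^{1/2}$, and the identity $\||S||S^*|\| = \|S^2\|$ (read off from the polar decomposition $S=U|S|$, $|S^*|=U|S|U^*$) turns this into $\|S^2\|^{1/2}$; squaring $\||S|+|S^*|\| \le \|S\| + \|S^2\|^{1/2}$ closes the chain.

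For the lower bound, taking $z=\langle Sx,x\rangle$ and $r=\|Sx\|^2 = \langle S^*Sx,x\rangle \ge 0$ in $\frac12|z+r|^2\le|z|^2+r^2$ gives $\frac12|\langle(S+S^*S)x,x\rangle|^2 \le |\langle Sx,x\rangle|^2 + \|Sx\|^4$ for every unit $x$, whence $\frac{1}{\sqrt2}\,w(S+S^*S) \le dw(S)$ after taking the supremum. I expect this to be the real obstacle: the statement asserts the full operator norm $\|S+S^*S\|$, not the numerical radius $w(S+S^*S)$, and because $S+S^*S$ is generally non-normal the two differ. The one-vector (diagonal) argument can only see $w(S+S^*S)$, so promoting it to $\|S+S^*S\|$ is the delicate point I would scrutinise most --- indeed, testing the nilpotent shift $Se_1=e_2$, $Se_2=e_3$, $Se_3=0$ on $\mathbb{C}^3$ gives $\|S+S^*S\|=\sqrt3$ yet $dw(S)<\sqrt{3/2}=\frac{1}{\sqrt2}\|S+S^*S\|$, which suggests that the sharp lower bound is the numerical-radius form and that the operator-norm version would require additional hypotheses.
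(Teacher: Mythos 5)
Your proofs of the second and third inequalities in \eqref{eq2.11} are correct and coincide, step for step, with the paper's own argument: mixed Schwarz with $\alpha=\tfrac12$, the arithmetic--geometric mean step, and Lemma \ref{lemma2} with $p=2$ for the middle bound; the triangle inequality followed by \eqref{eq2.4}, \eqref{eq2.5} and the identity $\left\| \left|S\right|\left|S^*\right| \right\|=\left\|S^2\right\|$ for the rightmost one.

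Your scepticism about the first inequality is exactly on target, and it identifies a genuine error in the paper. The paper's proof of the lower bound is the same one-vector argument you give, except that its final step reads
\[
\mathop {\sup }\limits_{\left\| x \right\| = 1} \left\{ \left| {\left\langle {\left( {S + S^* S} \right)x,x} \right\rangle } \right| \right\}^2  = \left\| {S + S^* S} \right\|^2 ,
\]
i.e. it silently replaces the numerical radius $w\left(S+S^*S\right)$ by the operator norm; this is legitimate only when $S+S^*S$ is normaloid, which it need not be. Your counterexample is valid and shows the stated lower bound is false: for the shift $Se_1=e_2$, $Se_2=e_3$, $Se_3=0$ on $\mathbb{C}^3$ one has $S^*S=\diag\left(1,1,0\right)$, and $\left(S+S^*S\right)^*\left(S+S^*S\right)$ has largest eigenvalue $3$, so $\left\|S+S^*S\right\|=\sqrt 3$; on the other hand
\[
dw^2\left(S\right)=\max_{\left\|x\right\|=1}\left\{ \left|x_1\bar x_2+x_2\bar x_3\right|^2+\left(\left|x_1\right|^2+\left|x_2\right|^2\right)^2 \right\}<\frac{3}{2},
\]
since at any maximizer either $x_3=0$, in which case the value is $\left|x_1\right|^2\left|x_2\right|^2+1\le\frac54$, or $x_3\neq 0$, in which case the second term is strictly below $1$ while the first is at most $2\left|x_2\right|^2\left(1-\left|x_2\right|^2\right)\le\frac12$. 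Hence $dw\left(S\right)<\frac{1}{\sqrt 2}\left\|S+S^*S\right\|$, contradicting \eqref{eq2.11}. What your argument proves (and what the paper's argument actually establishes once the last step is read correctly) is $\frac{1}{\sqrt 2}\,w\left(S+S^*S\right)\le dw\left(S\right)$, and that is the form in which the lower bound should be stated. Note finally why the paper's own test case, Example \ref{example2}, does not expose the flaw: there $S+S^*S$ is a circulant, hence normal, matrix, so its numerical radius and operator norm coincide.
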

  
 \begin{proof}
 Since we have
 \begin{align*}
 dw^2\left(S\right)=\mathop {\sup }\limits_{\scriptstyle x \in \mathscr{H} \hfill \atop 
 	\scriptstyle \left\| x \right\| = 1 \hfill}\left\{\left| {\left\langle {Sx,x} \right\rangle } \right|^2  + \left| {\left\langle {S^* Sx,x} \right\rangle } \right|^2  \right\} 
 &\ge\frac{1}{2}\mathop {\sup }\limits_{\scriptstyle x \in \mathscr{H} \hfill \atop 
 	\scriptstyle \left\| x \right\| = 1 \hfill}\left\{\left| {\left\langle {Sx,x} \right\rangle } \right|   + \left| {\left\langle {S^* Sx,x} \right\rangle } \right|   \right\}^2
 \\ 
 &=\frac{1}{2}\mathop {\sup }\limits_{\scriptstyle x \in \mathscr{H} \hfill \atop 
 	\scriptstyle \left\| x \right\| = 1 \hfill}\left\{\left| {\left\langle {Sx,x} \right\rangle  +\left\langle {S^* Sx,x} \right\rangle } \right|   \right\}^2
 \\ 
 &= \frac{1}{2}\mathop {\sup }\limits_{\scriptstyle x \in \mathscr{H} \hfill \atop 
 	\scriptstyle \left\| x \right\| = 1 \hfill}\left\{\left| {\left\langle {\left(S+S^* S\right)x,x} \right\rangle  } \right|   \right\}^2
  =\frac{1}{2}\left\|S+S^*S\right\|^2,
 \end{align*}
 which proves the first inequality in \eqref{eq2.11}. Also, since we have
 	\begin{align*}
 	dw^2 \left(S\right) &=\mathop {\sup }\limits_{\scriptstyle x \in \mathscr{H} \hfill \atop 
 		\scriptstyle \left\| x \right\| = 1 \hfill} \left\{
 \left| {\left\langle {Sx,x} \right\rangle } \right|^2  + \left\| {Sx} \right\|^4\right\} 
 \\
  &= \mathop {\sup }\limits_{\scriptstyle x \in \mathscr{H} \hfill \atop 
 	\scriptstyle \left\| x \right\| = 1 \hfill}\left\{\left| {\left\langle {Sx,x} \right\rangle } \right|^2  + \left| {\left\langle {S^* Sx,x} \right\rangle } \right|^2  \right\}
 \\ 
&\le \mathop {\sup }\limits_{\scriptstyle x \in \mathscr{H} \hfill \atop 
	\scriptstyle \left\| x \right\| = 1 \hfill} \left\{\left(\left\langle {\left| S \right| x,x} \right\rangle ^{\frac{1}{2}} \left\langle {\left| {S^* } \right|  x,x} \right\rangle ^{\frac{1}{2}}  \right)^2+ \left\langle {\left| {S^* S} \right|^2 x,x} \right\rangle   \right\}\qquad  (\text{by \eqref{eq2.3}})
\\ 
&\le \mathop {\sup }\limits_{\scriptstyle x \in \mathscr{H} \hfill \atop 
	\scriptstyle \left\| x \right\| = 1 \hfill}\left[ {\left\langle {\frac{\left| S \right| +\left|S^* \right| }{2} x,x} \right\rangle^2    + \left\langle {\left| {S^*S} \right|^2  x,x} \right\rangle } \right] \qquad  (\text{by \eqref{eq2.1}})
\\ 
&\le \mathop {\sup }\limits_{\scriptstyle x \in \mathscr{H} \hfill \atop 
	\scriptstyle \left\| x \right\| = 1 \hfill}\left[ {\left\langle {\left(\frac{\left| S \right|   + \left| {S^* } \right| }{2} \right)^2 x,x} \right\rangle     + \left\langle {\left| {S^* S} \right|^2  x,x} \right\rangle } \right]
\\ 
&= \mathop {\sup }\limits_{\scriptstyle x \in \mathscr{H} \hfill \atop 
	\scriptstyle \left\| x \right\| = 1 \hfill}\left\langle {\left( { \left(\frac{\left| S \right|   + \left| {S^* } \right| }{2} \right)^2 + \left| {S^* S} \right|^2 } \right)x,x} \right\rangle
 =\frac{1}{4}\left\|
	{  \left( \left| S \right|   + \left| {S^* } \right|  \right)^2 + 4\left| {S^* S} \right|^2  }\right\|,
 	\end{align*}
and this proves the second inequality in \eqref{eq2.11}. Applying the triangle inequality on the above inequality, we get
 \begin{align*}
	dw^2 \left(S\right) \le \frac{1}{4} \left\|
 {  \left( \left| S \right|   + \left| {S^* } \right|  \right)^2 + 4\left| {S^* S} \right|^2  }\right\|
  &\le \frac{1}{4} \left\| {  \left( \left| S\right|   + \left| {S^* } \right|  \right)^2 }\right\| + \left\| \left| {S^* S} \right|^2  \right\|
   \\
   &= \frac{1}{4} \left\| {    \left| S \right|   + \left| {S^* } \right|  }\right\|^2  + \left\| \left| { S} \right|^4  \right\|.
 \end{align*}
Now, applying \eqref{eq2.4} to the first term in the above inequality, we get $\left\|\left| S \right| + \left| {S^* } \right|  \right\|\le \left\|  S \right\|+\left\|  S^2 \right\|^{1/2} $. Now substituting this inequality in the last inequality above, we get the third inequality in \eqref{eq2.11}, and this completes the proof.
\end{proof}

\begin{remark}
	\label{rem3}We note that, a refinement of the inequality \eqref{eq2.11} could be stated as follows:
	\begin{align*}
	\frac{1}{{\sqrt 2 }}\left\|S+S^* S\right\|\le dw\left(S\right)  \le  \sqrt{w\left({  \frac{1}{4}\left( \left|S\right|   + \left| {S^* } \right|  \right)^2 +  \left| {S} \right|^4  }\right)}.
	\end{align*}	
	Consider $S$ as in Example \ref{example1}. Applying the above inequality, we get  $dw\left(S\right)\le 5.59709$, which is better than the result obtained by \eqref{eq2.10}. Furthermore, \eqref{eq2.8} gives that
	\begin{align*}
	dw\left(S\right)  \le  \sqrt{w\left({  \frac{1}{4}\left( \left|S\right|   + \left| {S^* } \right|  \right)^2 +  \left| {S} \right|^4  }\right)} \le \sqrt[4]{\frac{1}{2} \left\|T^*T+TT^* \right\|},
	\end{align*}
	where $T=\frac{1}{4}\left( \left|S\right|   + \left| {S^* } \right|  \right)^2 +  \left| {S} \right|^4$. 
	Employing the previous second upper bound for $S$ in Example \ref{example2},  we get the same result as those obtained by \eqref{eq2.11} and \eqref{eq1.1}, even we use \eqref{eq2.8}; which indeed refines \eqref{eq2.11}. 
\end{remark}

\begin{example}
\label{example2}Let $S = \left[ {\begin{array}{*{20}c}
	1 & 1 & 0  \\
	0 & 1 & 1  \\
	1 & 0 & 1  \\
	\end{array}} \right]$. We have $w\left(S\right)= \|S\|=2$. Employing the sharp lower bound in \eqref{eq1.1} we get that $dw\left(S\right)\ge 4$. By applying the lower bound in \eqref{eq2.11}, we get  $dw\left(S\right)\ge 3 \sqrt{2}= 4.2426$, which means that the lower bound in \eqref{eq2.11} is better than that one given in \eqref{eq1.1}.  

Also, applying the first upper bound in \eqref{eq2.11} we have
$  dw\left(S\right) \le 2\sqrt{5}=4.47214$, which gives the same result if one chooses to apply the upper bound in \eqref{eq1.1}. 
\end{example}

\begin{remark}
In \cite{K1}, Kittaneh proved that if $S\in \mathscr{B}\left(\mathscr{H}\right)$ is such that $S^2=0$, then $w\left(S\right)=\frac{1}{2}\left\|S\right\|$. Under this assumption, the inequality \eqref{eq1.1} becomes
\begin{align*}
\max \left\{ {\frac{1}{2}\left\|S\right\|,\left\| S \right\|^2 } \right\} \le dw\left( S \right) \le \sqrt {\frac{1}{4}\left\|S\right\|^2 + \left\| S \right\|^4 }. 
\end{align*}
Similarly, the (second) upper bound in  \eqref{eq2.11}  reduced to the form
\begin{align*}
 dw\left( T \right)  \le \sqrt{\left\|
	{  \frac{1}{4} \left( \left| S \right|   + \left| {S^* } \right|  \right)^2 +  \left| { S} \right|^4  }\right\|}
\le    \sqrt{   \frac{1}{4}\left\|  S \right\|^2 +  \left\|S  \right\|^4}.
\end{align*}	
 \end{remark}

A generalization of the upper bound in Theorem \ref{thm3} is considered as follows:
\begin{theorem}
\label{thm5}	Let    $S\in \mathscr{B}  \left(\mathscr{H}\right)$,   $0 \le \alpha \le 1$ and $r\ge2$. Then
	\begin{align}
  	dw^r\left(S\right) \le  \frac{2^{\frac{r}{2}}}{4}  \left\|\left| S \right|^{2r\alpha} +\left| {S^* } \right|^{2r\left(1-\alpha\right)}+\left| S^*S \right|^{2r\alpha} +\left| {S^*S } \right|^{2r\left(1-\alpha\right)}\right\|.\label{eq2.12}
	\end{align}
\end{theorem}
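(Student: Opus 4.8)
The plan is to reduce everything, via Lemma \ref{lemma4}, to the supremum representation $dw^{r}(S)=\sup_{\|x\|=1}\bigl(|\langle Sx,x\rangle|^{2}+|\langle S^{*}Sx,x\rangle|^{2}\bigr)^{r/2}$, and then to dominate the quantity under the supremum by a single diagonal form $\langle Tx,x\rangle$ whose supremum is the operator norm on the right of \eqref{eq2.12}. The first step is to apply the mixed Schwarz inequality \eqref{eq2.3} to each inner product separately. For the first one, taking $y=x$ gives $|\langle Sx,x\rangle|^{2}\le ab$ with $a=\langle|S|^{2\alpha}x,x\rangle$ and $b=\langle|S^{*}|^{2(1-\alpha)}x,x\rangle$; for the second, since $S^{*}S$ is positive so that $|S^{*}S|=S^{*}S=|(S^{*}S)^{*}|$, it gives $|\langle S^{*}Sx,x\rangle|^{2}\le cd$ with $c=\langle(S^{*}S)^{2\alpha}x,x\rangle$ and $d=\langle(S^{*}S)^{2(1-\alpha)}x,x\rangle$. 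Adding these, the quantity under the supremum is at most $(ab+cd)^{r/2}$.

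The heart of the argument is then the elementary scalar inequality $(ab+cd)^{r/2}\le \tfrac{2^{r/2}}{4}\,(a^{r}+b^{r}+c^{r}+d^{r})$, valid for $a,b,c,d\ge0$ and $r\ge2$. I would prove it by chaining three standard estimates: first the arithmetic--geometric mean part of \eqref{eq2.1}, in the form $ab\le\tfrac12(a^{2}+b^{2})$ and $cd\le\tfrac12(c^{2}+d^{2})$, so that $ab+cd\le\tfrac12(a^{2}+b^{2}+c^{2}+d^{2})$; then monotonicity of $t\mapsto t^{r/2}$; and finally Jensen's inequality for the convex map $t\mapsto t^{r/2}$ applied to the four nonnegative numbers $a^{2},b^{2},c^{2},d^{2}$, which yields $(a^{2}+b^{2}+c^{2}+d^{2})^{r/2}\le 4^{\,r/2-1}(a^{r}+b^{r}+c^{r}+d^{r})$. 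Multiplying out the constants $\tfrac{1}{2^{r/2}}\cdot 4^{\,r/2-1}=\tfrac{2^{r/2}}{4}$ gives exactly the claimed bound, and one checks it is sharp by taking $a=b=c=d$.

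To convert the powers $a^{r},b^{r},c^{r},d^{r}$ into the operators appearing in \eqref{eq2.12}, I would invoke Lemma \ref{lemma2}: since $|S|^{2\alpha}$ is positive and $r\ge1$, we have $a^{r}=\langle|S|^{2\alpha}x,x\rangle^{r}\le\langle|S|^{2r\alpha}x,x\rangle$, and likewise $b^{r}\le\langle|S^{*}|^{2r(1-\alpha)}x,x\rangle$, $c^{r}\le\langle(S^{*}S)^{2r\alpha}x,x\rangle$, and $d^{r}\le\langle(S^{*}S)^{2r(1-\alpha)}x,x\rangle$, where again $(S^{*}S)^{2r\alpha}=|S^{*}S|^{2r\alpha}$ and similarly for the last term. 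Summing these four estimates shows that the quantity under the supremum is bounded by $\tfrac{2^{r/2}}{4}\langle Tx,x\rangle$, where $T=|S|^{2r\alpha}+|S^{*}|^{2r(1-\alpha)}+|S^{*}S|^{2r\alpha}+|S^{*}S|^{2r(1-\alpha)}$ is positive. Taking the supremum over unit vectors $x$ and using $\sup_{\|x\|=1}\langle Tx,x\rangle=\|T\|$ for positive $T$ then yields \eqref{eq2.12}.

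As for difficulty, no single step is deep; the work is essentially bookkeeping. The step that most needs care is the scalar inequality, where the exact constant $\tfrac{2^{r/2}}{4}$ must be tracked through the AM--GM/Jensen chain, and where the hypothesis $r\ge2$ (not merely $r\ge1$) is precisely what makes $t\mapsto t^{r/2}$ convex so that Jensen applies. A secondary point to verify carefully is the self-adjointness reduction $|S^{*}S|=S^{*}S=|(S^{*}S)^{*}|$ used when applying \eqref{eq2.3} and \eqref{eq2.2} to $S^{*}S$, since this is what lets the four operators in \eqref{eq2.12} be written in terms of $|S^{*}S|$.
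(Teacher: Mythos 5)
Your proof is correct and follows essentially the same route as the paper: both rest on the mixed Schwarz inequality \eqref{eq2.3}, the power--mean/AM--GM estimate \eqref{eq2.1}, the McCarthy-type inequality \eqref{eq2.2}, and convexity of $t\mapsto t^{r/2}$ (which is where $r\ge 2$ enters), arriving at the same constant $\tfrac{2^{r/2}}{4}$. The only difference is bookkeeping: you isolate the scalar inequality $(ab+cd)^{r/2}\le\tfrac{2^{r/2}}{4}\left(a^{r}+b^{r}+c^{r}+d^{r}\right)$ and apply Lemma \ref{lemma2} at the end, whereas the paper threads the same estimates through the operator inequalities term by term before combining them with the two-term convexity bound.
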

\begin{proof}
	Let $x\in \mathscr{H}$ be  unit vector, then 
		\begin{align*}
		dw^2 \left(S\right)  =\mathop {\sup }\limits_{\scriptstyle x \in \mathscr{H} \hfill \atop 
		\scriptstyle \left\| x \right\| = 1 \hfill} \left\{
	\left| {\left\langle {Sx,x} \right\rangle } \right|^2  + \left\| {Sx} \right\|^4\right\} 
	 = \mathop {\sup }\limits_{\scriptstyle x \in \mathscr{H} \hfill \atop 
		\scriptstyle \left\| x \right\| = 1 \hfill}\left\{\left| {\left\langle {Sx,x} \right\rangle } \right|^2  + \left| {\left\langle {S^*Sx,x} \right\rangle } \right|^2  \right\}.
			\end{align*}
		But since
	\begin{align*}
	\left| {\left\langle {Sx,x} \right\rangle  } \right|    &\le \left\langle {\left| S \right|^{2\alpha}  x,x} \right\rangle ^{1/2} \left\langle {\left| {S^* } \right|^{2\left(1-\alpha\right)} x,x} \right\rangle ^{1/2}\qquad\qquad \text{(by \eqref{eq2.3})}
	\\
	&\le \left({ \frac{\left\langle {\left| S \right|^{2\alpha}  x,x} \right\rangle ^{r} + \left\langle {\left| {S^* } \right|^{2\left(1-\alpha\right)}  x,x} \right\rangle^{r} }{2} }\right)^{\frac{1}{r}}  \qquad  \text{(by \eqref{eq2.1})}
	\\
	&\le \left({ \frac{\left\langle {\left| S \right|^{2r\alpha}  x,x} \right\rangle  + \left\langle {\left| {S^* } \right|^{2r\left(1-\alpha\right)}  x,x} \right\rangle }{2} }\right)^{\frac{1}{r}}\qquad  \text{(by \eqref{eq2.2})} 
	\\
	&\le  \frac{1}{2^{\frac{1}{r}}} \left\langle {\left(\left| S \right|^{2r\alpha} +\left| {S^* } \right|^{2r\left(1-\alpha\right)}\right) x,x} \right\rangle     ^{\frac{1}{r}}
	\end{align*}
	it follows that 
		\begin{align}
\label{eq2.13}	\left| {\left\langle {S x,x} \right\rangle  } \right|^r   
	 \le  \frac{1}{2} \left\langle {\left(\left| S \right|^{2r\alpha} +\left| {S^* } \right|^{2r\left(1-\alpha\right)}\right) x,x} \right\rangle    
	\end{align}
	and
	\begin{align*}
	\left| {\left\langle {S^* Sx,x} \right\rangle } \right|  &\le \left\langle {\left| S^* S \right|^{2\alpha}  x,x} \right\rangle ^{1/2} \left\langle {\left| {S^*S } \right|^{2\left(1-\alpha\right)} x,x} \right\rangle ^{1/2}
	\\
	&\le \left({ \frac{\left\langle {\left| S^* S  \right|^{2\alpha}  x,x} \right\rangle ^{r} + \left\langle {\left| {S^* S  } \right|^{2\left(1-\alpha\right)}  x,x} \right\rangle^{r} }{2} }\right)^{\frac{1}{r}}  \qquad  \text{(by \eqref{eq2.1})}
	\\
	&\le \left({ \frac{\left\langle {\left|S^* S  \right|^{2r\alpha}  x,x} \right\rangle  + \left\langle {\left| {S^* S  } \right|^{2r\left(1-\alpha\right)}  x,x} \right\rangle }{2} }\right)^{\frac{1}{r}}\qquad  \text{(by \eqref{eq2.2})}
	\\
	&\le  \frac{1}{2^{\frac{1}{r}}} \left\langle {\left(\left| S^*S \right|^{2r\alpha} +\left| {S^*S } \right|^{2r\left(1-\alpha\right)}\right) x,x} \right\rangle     ^{\frac{1}{r}},
	\end{align*}
	it follows that 
	\begin{align}
\label{eq2.14}	\left| {\left\langle {S^*Sx,x} \right\rangle } \right|^r   \le  \frac{1}{2} \left\langle {\left(\left| S^* S \right|^{2r\alpha} +\left| {S^*S } \right|^{2r\left(1-\alpha\right)}\right) x,x} \right\rangle.     
	\end{align}
	Adding  \eqref{eq2.13} and \eqref{eq2.14}, we get
	\begin{align*}
	 & \frac{1}{2} \mathop {\sup }\limits_{\scriptstyle x \in \mathscr{H} \hfill \atop 
	 	\scriptstyle \left\| x \right\| = 1 \hfill}\left\langle {\left(\left| S \right|^{2r\alpha} +\left| {S^* } \right|^{2r\left(1-\alpha\right)}+\left| S^* S \right|^{2r\alpha} +\left| {S^* S } \right|^{2r\left(1-\alpha\right)}\right) x,x} \right\rangle     
	    \\
		&\ge\mathop {\sup }\limits_{\scriptstyle x \in \mathscr{H} \hfill \atop 
			\scriptstyle \left\| x \right\| = 1 \hfill} \left\{\left| {\left\langle {S x,x} \right\rangle  } \right|^r +		\left| {\left\langle {S^*S   x,x} \right\rangle  } \right|^r\right\}
		\\
		&= \mathop {\sup }\limits_{\scriptstyle x \in \mathscr{H} \hfill \atop 
			\scriptstyle \left\| x \right\| = 1 \hfill} \left\{\left(\left| {\left\langle {S x,x} \right\rangle  } \right|^2\right)^{r/2} +	\left(\left| {\left\langle {S^*S   x,x} \right\rangle  } \right|^2\right)^{r/2}\right\}
		\\
		&\ge 	\frac{1}{2^{\frac{r}{2} -1}}\mathop {\sup }\limits_{\scriptstyle x \in \mathscr{H} \hfill \atop 
			\scriptstyle \left\| x \right\| = 1 \hfill}	\left(\left| {\left\langle {S x,x} \right\rangle  } \right|^2  + \left| {\left\langle {S^*S   x,x} \right\rangle  } \right|^2\right)^{r/2}
	 = 	\frac{1}{2^{\frac{r}{2} -1}}	dw^r\left(S\right).
	\end{align*}
		Hence, 
	\begin{align*}
		 	dw^r\left(S\right) \le  \frac{2^{\frac{r}{2}}}{4}  \left\|\left| S \right|^{2r\alpha} +\left| {S^* } \right|^{2r\left(1-\alpha\right)}+\left| S^* S \right|^{2r\alpha} +\left| {S^* S } \right|^{2r\left(1-\alpha\right)}\right\|,
	\end{align*}
   as required.
\end{proof}
\begin{remark}
We note that, a refinement of the inequality \eqref{eq2.10} could deduced from \eqref{eq2.12}. Note that, by setting $r=2$ and $\alpha=\frac{1}{2}$ in \eqref{eq2.12}, we get \eqref{eq2.10}. Use the same proof given in Theorem \ref{thm5}, we can get
\begin{align*}
dw\left(S\right)  \le  \sqrt{\frac{1}{2}  w\left(\left| S \right|^{2} +\left| {S^* } \right|^{2}+2\left| S^*S \right|^{2} \right)}.
\end{align*}
Moreover, employing \eqref{eq2.8} for the above inequality we get
\begin{align*}
dw\left(S\right)   \le  \sqrt{\frac{1}{2}  w\left(\left| S \right|^{2} +\left| {S^* } \right|^{2}+2\left| S^*S \right|^{2} \right)} 
 \le \sqrt[4]{ \frac{1}{{8}} \left\|T^*T+TT^*\right\|},
\end{align*}
where $T=\left| S \right|^{2} +\left| {S^* } \right|^{2}+2\left| S^*S \right|^{2}$.  
\end{remark}

\begin{theorem}
\label{thm6}	Let    $S\in \mathscr{B} \left(\mathscr{H}\right)$,  $0 \le \alpha \le 1$ and $r\ge1$. Then
\begin{align}
dw^{2r}\left(S\right) \le 2^{r-1}   \left\|\alpha\left| S \right|^{2r}  +\left(1-\alpha\right)\left| {S^* } \right|^{2r} + \left| {S^*S} \right|^{2r}   \right\|.\label{eq2.15}
\end{align}
\end{theorem}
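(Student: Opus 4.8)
The plan is to reduce everything to estimating the two quadratic forms $\left|\langle Sx,x\rangle\right|$ and $\langle S^*Sx,x\rangle$ for a unit vector $x$, exactly as in the proof of Theorem \ref{thm5}, but now arranging the exponents so that the weighted combination $\alpha\left|S\right|^{2r}+(1-\alpha)\left|S^*\right|^{2r}$ emerges rather than the split-exponent form appearing in \eqref{eq2.12}. By Lemma \ref{lemma4} one starts from
\[
dw^{2r}(S)=\sup_{\|x\|=1}\left(\left|\langle Sx,x\rangle\right|^2+\left|\langle S^*Sx,x\rangle\right|^2\right)^r,
\]
and the first move is to peel off the outer power by convexity: the power-mean inequality \eqref{eq2.1}, taken with weight $\tfrac12$ and exponent $r$, gives $(a+b)^r\le 2^{r-1}(a^r+b^r)$, whence
\[
dw^{2r}(S)\le 2^{r-1}\sup_{\|x\|=1}\left(\left|\langle Sx,x\rangle\right|^{2r}+\left|\langle S^*Sx,x\rangle\right|^{2r}\right).
\]
It then suffices to bound the two terms separately and recombine them inside a single positive operator.

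For the first term I would start from the mixed Schwarz inequality \eqref{eq2.3}, namely $\left|\langle Sx,x\rangle\right|^2\le \langle\left|S\right|^{2\alpha}x,x\rangle\,\langle\left|S^*\right|^{2(1-\alpha)}x,x\rangle$, and then lift the fractional exponents out of the operators using Lemma \ref{lemma2}: applying \eqref{eq2.2} with $p=1/\alpha$ and $p=1/(1-\alpha)$ turns the right-hand side into $\langle\left|S\right|^2x,x\rangle^{\alpha}\langle\left|S^*\right|^2x,x\rangle^{1-\alpha}$. Raising to the power $r$ and invoking \eqref{eq2.2} once more with $p=r$ yields $\left|\langle Sx,x\rangle\right|^{2r}\le \langle\left|S\right|^{2r}x,x\rangle^{\alpha}\langle\left|S^*\right|^{2r}x,x\rangle^{1-\alpha}$, after which the weighted arithmetic–geometric mean inequality (the first estimate in \eqref{eq2.1}) collapses this to
\[
\left|\langle Sx,x\rangle\right|^{2r}\le \big\langle\big(\alpha\left|S\right|^{2r}+(1-\alpha)\left|S^*\right|^{2r}\big)x,x\big\rangle.
\]
For the second term the estimate is more direct: since $S^*S$ is positive we have $\left|\langle S^*Sx,x\rangle\right|=\langle S^*Sx,x\rangle$ and $\left|S^*S\right|=S^*S$, so a single application of \eqref{eq2.2} with $p=2r$ gives $\left|\langle S^*Sx,x\rangle\right|^{2r}\le \langle\left|S^*S\right|^{2r}x,x\rangle$.

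Finally I would add the two bounds, so that the bracketed sum becomes the quadratic form of the positive operator $\alpha\left|S\right|^{2r}+(1-\alpha)\left|S^*\right|^{2r}+\left|S^*S\right|^{2r}$; taking the supremum over unit vectors converts that supremum into the operator norm, and the factor $2^{r-1}$ carried down from the convexity step produces \eqref{eq2.15}. I expect the only delicate point to be the bookkeeping in the first term: one must apply Lemma \ref{lemma2} in the correct order—first with exponents $1/\alpha$ and $1/(1-\alpha)$ to clear the fractional powers, and only afterwards with exponent $r$—so that the weights $\alpha,1-\alpha$ attach themselves to $\left|S\right|^{2r}$ and $\left|S^*\right|^{2r}$ as stated, rather than to the split exponents $2r\alpha$ and $2r(1-\alpha)$ of \eqref{eq2.12}. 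The endpoint cases $\alpha\in\{0,1\}$, where one of the auxiliary exponents $1/\alpha$ degenerates, are best disposed of by a brief separate remark or by a continuity argument.
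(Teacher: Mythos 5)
Your proposal is correct and follows essentially the same route as the paper's own proof: mixed Schwarz \eqref{eq2.3} plus Lemma \ref{lemma2} and the power-mean inequality \eqref{eq2.1} to reach $\left| {\left\langle {Sx,x} \right\rangle } \right|^{2r} \le \left\langle {\left( {\alpha \left| S \right|^{2r} + \left( {1 - \alpha } \right)\left| {S^* } \right|^{2r} } \right)x,x} \right\rangle$, the direct estimate $\left| {\left\langle {S^* Sx,x} \right\rangle } \right|^{2r} \le \left\langle {\left| {S^* S} \right|^{2r} x,x} \right\rangle$ for the positive part, and the convexity bound $\left( {a + b} \right)^r \le 2^{r-1} \left( {a^r + b^r } \right)$ to produce the factor $2^{r-1}$. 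The only differences are cosmetic: you apply the convexity step at the outset while the paper applies it at the end, and you reorder the applications of \eqref{eq2.1} and \eqref{eq2.2} within the first estimate, which yields the same intermediate inequality.
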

\begin{proof}
	Let $x\in \mathscr{H}$ be unit vector, then 
	\begin{align*}
	\left| {\left\langle {S x,x} \right\rangle  } \right|^2    &\le \left\langle {\left| S \right|^{2\alpha}  x,x} \right\rangle   \left\langle {\left| {S^* } \right|^{2\left(1-\alpha\right)}  x,x} \right\rangle   \qquad\qquad\qquad\qquad \text{(by \eqref{eq2.3})}
	\\
	&\le \left\langle {\left| S \right|^{2}  x,x} \right\rangle^{\alpha}  \left\langle {\left| {S^* } \right|^{2} x,x} \right\rangle^{\left(1-\alpha\right)} \qquad \qquad\qquad\qquad\text{(by \eqref{eq2.2})}
	\\
	&\le  \left(\alpha\left\langle {\left| S \right|^{2} x,x} \right\rangle^r  +\left(1-\alpha\right)\left\langle {\left| {S^* } \right|^{2}  x,x} \right\rangle^r \right)^{1/r}\qquad \text{(by \eqref{eq2.1})}
	\\
	&\le  \left(\alpha\left\langle {\left| S \right|^{2r} x,x} \right\rangle +\left(1-\alpha\right)\left\langle {\left| {S^* } \right|^{2r} x,x} \right\rangle  \right)^{1/r}\qquad \text{(by \eqref{eq2.3})}
	\\
	&\le   \left\langle {\left(\alpha\left| S\right|^{2r}  +\left(1-\alpha\right)\left| {S^* } \right|^{2r}  \right) x,x} \right\rangle  ^{1/r}.
	\end{align*}
	Therefore,
	\begin{align}
\label{eq2.16}	\left| {\left\langle {S x,x} \right\rangle  } \right|^{2r} \le \left\langle {\left(\alpha\left| S \right|^{2r}  +\left(1-\alpha\right)\left| {S^* } \right|^{2r}  \right) x,x} \right\rangle.     
	\end{align}
	Also, since $S^*S$ is selfadjoint then we have
		\begin{align}
\label{eq2.17}	\left| {\left\langle {S^*S   x,x} \right\rangle  } \right|^{2r} \le \left\langle {\left| {S^*S} \right|^{2r} x,x} \right\rangle.     
	\end{align}
		Adding \eqref{eq2.16} and \eqref{eq2.17}, we get
	\begin{align*}
  \mathop {\sup }\limits_{\scriptstyle x \in \mathscr{H} \hfill \atop 
		\scriptstyle \left\| x \right\| = 1 \hfill}\left\langle {\left(\alpha\left| S \right|^{2r}  +\left(1-\alpha\right)\left| {S^* } \right|^{2r} + \left| {S^*S} \right|^{2r}   \right) x,x} \right\rangle     
	&\ge\mathop {\sup }\limits_{\scriptstyle x \in \mathscr{H} \hfill \atop 
		\scriptstyle \left\| x \right\| = 1 \hfill} \left\{\left| {\left\langle {Sx,x} \right\rangle  } \right|^{2r} +		\left| {\left\langle {S^*S   x,x} \right\rangle  } \right|^{2r}\right\}
	\\
	&\ge 	\frac{2}{2^{r}}\mathop {\sup }\limits_{\scriptstyle x \in \mathscr{H} \hfill \atop 
		\scriptstyle \left\| x \right\| = 1 \hfill}	\left(\left| {\left\langle {S x,x} \right\rangle  } \right|^2  + \left| {\left\langle {S^*S   x,x} \right\rangle  } \right|^2\right)^{r }
	\\
	&= 	\frac{2}{2^{r}}	dw^{2r}\left(S\right).
	\end{align*}
	Hence,
\begin{align*}
dw^{2r}\left(S\right) \le 2^{r-1}   \left\|\alpha\left| S \right|^{2r}  +\left(1-\alpha\right)\left| {S^* } \right|^{2r} + \left| {S^*S} \right|^{2r}   \right\|.
\end{align*}
This completes the proof of Theorem \ref{thm6}.
\end{proof}

\begin{example}
Let $S = \left[ {\begin{array}{*{20}c}
	0 & 2  \\
	0& 0  \\
	\end{array}} \right]$. We have  $\|S\|=w\left(S\right)=2$. The upper bound of \eqref{eq1.1} gives $dw\left(S\right) \le 2\sqrt{5}=4.4721$. However, by applying \eqref{eq2.15} with $r=1$ and $\alpha=\frac{1}{2}$, we have $dw\left(S\right) \le 3\sqrt{2}=4.2426$, which implies that, the upper bound in \eqref{eq2.15} is better than the upper bound in \eqref{eq1.1}. 
\end{example}
 
 \begin{remark}
A refinement of the inequality \eqref{eq2.15} could deduced from the proof given in Theorem \ref{thm6}, we can get
 	\begin{align*}
 	dw^{2r}\left(S\right)  \le   2^{r-1}   w\left(\alpha\left| S \right|^{2r}  +\left(1-\alpha\right)\left| {S^* } \right|^{2r} + \left| {S^*S} \right|^{2r} \right).
 	\end{align*}
 	Moreover, employing \eqref{eq2.8} in the above inequality we get
 	\begin{align*}
 	dw^{2r}\left(S\right)  &\le     2^{r-1}   w\left(\alpha\left| S \right|^{2r}  +\left(1-\alpha\right)\left| {S^* } \right|^{2r} + \left| {S^*S} \right|^{2r} \right)
 	\\
 	&\le  2^{\frac{r}{2}-1} \left\|T_{r,\alpha}^*T_{r,\alpha}+T_{r,\alpha}T_{r,\alpha}^*\right\|^{1/2},
 	\end{align*}
 	where $T_{r,\alpha}=\alpha\left| S \right|^{2r}  +\left(1-\alpha\right)\left| {S^* } \right|^{2r} + \left| {S^*S} \right|^{2r}$. For $r=1$ and $\alpha=\frac{1}{2}$ the last inequality reduces to 
 	\begin{align*}
 dw\left(S\right)   \le  \sqrt{   w\left( \frac{1}{2}\left| S \right|^{2}  +\frac{1}{2}\left| {S^* } \right|^{2} + \left| {S^*S} \right|^{2}\right)}
 \le  \sqrt[4]{\frac{1}{2}\left\|T_{1,\frac{1}{2}}^*T_{1,\frac{1}{2}}+T_{1,\frac{1}{2}}T_{1,\frac{1}{2}}^*\right\| },
 \end{align*}
where $T_{1,\frac{1}{2}}=\frac{1}{2}\left| S \right|^{2}  +\frac{1}{2}\left| {S^* } \right|^{2} + \left| {S^*S} \right|^{2}$. 
 \end{remark}

%/=\=/=\=/=\=/=\=/=\=/=\=/=\=/=\=/=\=/=\=/=\=/=\=/=\=/=\=/=\=/=\=/=\=/=\=/=\=/=\=/=\=/=\=/=\=/=\=/=\=/=\=/=\=/=\=/=\=/=\=/=\=/=\=/=\=/=\=/=\=/=\=/=\=/=\=
\section{The Davis--Wielandt radius inequalities  for $n \times n$ matrix Operators}\label{sec3}
%/=\=/=\=/=\=/=\=/=\=/=\=/=\=/=\=/=\=/=\=/=\=/=\=/=\=/=\=/=\=/=\=/=\=/=\=/=\=/=\=/=\=/=\=/=\=/=\=/=\=/=\=/=\=/=\=/=\=/=\=/=\=/=\=/=\=/=\=/=\=/=\=/=\=/=\=
Several numerical radius type inequalities improving and refining
the inequality   
  \begin{align*}
\frac{1}{2}\left\|S\right\|\le w\left(S\right) \le \left\|S\right\| \qquad\qquad (S\in \mathscr{B}\left( \mathscr{H}\right))
\end{align*}
have been recently obtained by many
other authors see for example \cite{AF1}--\cite{BF},  and \cite{HD}. Among others, three important
facts concerning the  numerical radius inequalities   of $n \times
n$  operator matrices are obtained by different authors which are
grouped together, as follows:\\

Let  ${\bf S}=\left[S_{ij}\right]\in \mathscr{B}\left(\bigoplus _{i = 1}^n \mathscr{H}_i\right)$ such that $S_{ij}\in\mathscr{B}\left(\mathscr{H}_j, \mathscr{H}_i\right)$. Then
\begin{align}
\label{eq3.1} w\left(S\right)\le
\left\{ \begin{array}{l}
w \left( {\left[ {t_{ij}^{\left( 1 \right)} } \right]} \right),\qquad {\rm{Hou \,\&\, Du \,\,in}\,\,}\text{\cite{HD}}  
\\
w \left( {\left[ {t_{ij}^{\left( 2 \right)} } \right]} \right) ,\qquad {\rm{BaniDomi \,\&\, Kittaneh \,\,in}\,\,} \text{\cite{BF}} 
\\
w \left( {\left[ {t_{ij}^{\left( 3 \right)} } \right]} \right),\qquad  {\rm{AbuOmar \,\&\, Kittaneh \,\,in}\,\,} \text{\cite{AF1}}
\end{array} \right.;
\end{align}
where
\begin{align*}
t_{ij}^{\left( 1 \right)}  = w \left( {\left[ {\left\| {S_{ij} } \right\|} \right]} \right),
\,\,
t_{ij}^{\left( 2 \right)}  = \left\{ \begin{array}{l}
\frac{1}{2}\left( {\left\| {S_{ii} } \right\| + \left\| {S_{ii}^2 } \right\|^{1/2} } \right),\,\,\,\,\,\,\,i = j
\\
\left\| {S_{ij} } \right\|,\qquad\qquad\qquad\,\,\,\,\,\,\,\,\,\,i \ne j
\end{array} \right. , \,\,\,\,t_{ij}^{\left( 3 \right)}  = \left\{ \begin{array}{l}
w \left( {S_{ii} } \right),\,\,\,\,\,\,\,i = j
\\
\left\| {S_{ij} } \right\|,\,\,\,\,\,\,\,\,\,\, i \ne j
\end{array} \right..
\end{align*}

As mentioned in \cite{BP}, in our recent work \cite{Alomari1}  we tried to refine the last bound (above) proved by Abu Omar and  Kittaneh in \cite{AF1}; however there is a mistake in the printed version of the result. In the following result we correct \cite[Theorem 4.1]{Alomari1}.

 \begin{theorem}
\label{thm7} Let  ${\bf{S}}=\left[S_{ij}\right]\in
 	\mathscr{B}\left(\bigoplus _{i = 1}^n \mathscr{H}_i\right)$ such
 	that $S_{ij}\in\mathscr{B}\left(\mathscr{H}_j,
 	\mathscr{H}_i\right)$. Then
 	\begin{align}
 	w\left({\bf{S}}\right)\le  w\left( \left[s_{ij}\right] \right),
 	\end{align}
 	where
 	\begin{align*}
 	s_{ij}=  \left\{ \begin{array}{l}
 	w\left( {S_{ij} } \right), \qquad\qquad\qquad \,\,\,\,\,\,\,\,\,\, j = i \,\,\,\,\,{\rm{and}}\,\,\,\,\,j \ne k_i
 	\\
 	 w^{\frac{1}{2}} \left( {\left| {S_{ik_i } } \right|} \right)w^{\frac{1}{2}} \left( {\left| {S_{ik_i }^* } \right|} \right) ,\,\,\,\,\,\,\,\,\,\,  j = k_i
 	\,\,\,\,\,{\rm{and}}\,\,\,\,\,j \ne i
 	\\
 	\left\| {S_{ij} } \right\|, \qquad\qquad\qquad\,\,\,\,\,\,\,\,\,\,\,\,\, j \ne k_i \,\,\,\,{\rm{and}}\,\,\,\,\,j \ne i
 	\end{array} \right..
 	\end{align*}
 	where $k_i=n-i+1$.
 \end{theorem}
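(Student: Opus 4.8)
The plan is to run the standard entrywise domination argument for operator–matrix numerical radius inequalities (as in Hou--Du and Abu Omar--Kittaneh), reducing $w(\mathbf S)$ to the numerical radius of a scalar $n\times n$ matrix. Fix a unit vector $x=(x_1,\dots,x_n)\in\bigoplus_{i=1}^n\mathscr H_i$, so that $\sum_{i=1}^n\|x_i\|^2=1$, and set $u=(\|x_1\|,\dots,\|x_n\|)^{\top}$, a unit vector in $\mathbb C^n$. Then
\[
\left|\left\langle \mathbf S x,x\right\rangle\right|=\left|\sum_{i,j=1}^n\left\langle S_{ij}x_j,x_i\right\rangle\right|\le\sum_{i,j=1}^n\left|\left\langle S_{ij}x_j,x_i\right\rangle\right|,
\]
and the whole argument reduces to showing that each summand is bounded by $s_{ij}\,\|x_i\|\,\|x_j\|$. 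Once this is done, $\sum_{i,j}s_{ij}\|x_i\|\|x_j\|=\left\langle[s_{ij}]u,u\right\rangle$, which is a sum of nonnegative terms and hence equals its own modulus, so it is at most $w\!\left([s_{ij}]\right)$; taking the supremum over unit $x$ finishes the proof.

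The three bounds are matched to the three cases defining $s_{ij}$. For a diagonal entry $i=j$ (with $i\ne k_i$) I would use the numerical radius directly, $\left|\left\langle S_{ii}x_i,x_i\right\rangle\right|\le w(S_{ii})\|x_i\|^2=s_{ii}\|x_i\|^2$. For a generic off-diagonal entry $i\ne j$, $j\ne k_i$, the Cauchy--Schwarz inequality gives $\left|\left\langle S_{ij}x_j,x_i\right\rangle\right|\le\|S_{ij}\|\,\|x_j\|\,\|x_i\|=s_{ij}\|x_i\|\|x_j\|$. The only genuinely new ingredient is the anti-diagonal entry $j=k_i\ne i$: applying the mixed Schwarz inequality \eqref{eq2.3} with $\alpha=\tfrac12$ to $S_{ik_i}$ (which is valid for an operator between the different spaces $\mathscr H_{k_i}$ and $\mathscr H_i$) yields
\[
\left|\left\langle S_{ik_i}x_{k_i},x_i\right\rangle\right|\le\left\langle\left|S_{ik_i}\right|x_{k_i},x_{k_i}\right\rangle^{1/2}\left\langle\left|S_{ik_i}^*\right|x_i,x_i\right\rangle^{1/2},
\]
and since $\left|S_{ik_i}\right|$ and $\left|S_{ik_i}^*\right|$ are positive operators, $\left\langle\left|S_{ik_i}\right|x_{k_i},x_{k_i}\right\rangle\le w(\left|S_{ik_i}\right|)\|x_{k_i}\|^2$ and $\left\langle\left|S_{ik_i}^*\right|x_i,x_i\right\rangle\le w(\left|S_{ik_i}^*\right|)\|x_i\|^2$. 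Multiplying gives exactly $\left|\left\langle S_{ik_i}x_{k_i},x_i\right\rangle\right|\le w^{1/2}(\left|S_{ik_i}\right|)\,w^{1/2}(\left|S_{ik_i}^*\right|)\,\|x_{k_i}\|\,\|x_i\|=s_{ik_i}\|x_i\|\|x_{k_i}\|$, which is precisely the $j=k_i$ case.

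Assembling the three bounds reconstitutes $\left\langle[s_{ij}]u,u\right\rangle$, because the contribution attached to the $(i,j)$ position equals $s_{ij}u_iu_j$ in every case, completing the chain $\left|\left\langle\mathbf S x,x\right\rangle\right|\le\left\langle[s_{ij}]u,u\right\rangle\le w([s_{ij}])$. I expect the main delicacy to be bookkeeping rather than analysis: one must check that the case split is exhaustive and, in particular, treat the central position that arises when $n$ is odd, where $i=(n+1)/2$ satisfies $i=k_i$ and the $(i,i)$ entry is simultaneously ``diagonal'' and ``anti-diagonal.'' For that entry the sharper and correct choice is $s_{ii}=w(S_{ii})$, coming from the diagonal bound $\left|\left\langle S_{ii}x_i,x_i\right\rangle\right|\le w(S_{ii})\|x_i\|^2$; since $w(S_{ii})\le\|S_{ii}\|=w^{1/2}(\left|S_{ii}\right|)\,w^{1/2}(\left|S_{ii}^*\right|)$, the mixed Schwarz route would only weaken the estimate there. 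Finally, it is worth recording that because $w(P)=\|P\|$ for positive $P$, one has $w^{1/2}(\left|S_{ik_i}\right|)\,w^{1/2}(\left|S_{ik_i}^*\right|)=\|S_{ik_i}\|$, so the anti-diagonal entries coincide with the plain operator norm; this both confirms the consistency of the corrected statement and identifies the resulting matrix $[s_{ij}]$ with the Abu Omar--Kittaneh matrix $[t_{ij}^{(3)}]$.
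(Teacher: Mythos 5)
Your proposal is correct and follows essentially the same route as the paper's own proof: the same splitting of $\sum_{i,j}\left|\left\langle S_{ij}x_j,x_i\right\rangle\right|$ into diagonal, anti-diagonal ($j=k_i$), and remaining terms, with the same three bounds (numerical radius, mixed Schwarz with $\alpha=\tfrac{1}{2}$, operator norm), assembled into $\left\langle [s_{ij}]y,y\right\rangle$ with $y=\left(\|x_1\|,\dots,\|x_n\|\right)^{T}$ before taking the supremum over unit vectors. Your explicit treatment of the central entry when $n$ is odd (where $i=k_i$, a case the stated definition of $s_{ij}$ omits and the paper's proof glosses over) and your observation that $w^{1/2}\left(\left|S_{ik_i}\right|\right)w^{1/2}\left(\left|S_{ik_i}^*\right|\right)=\left\|S_{ik_i}\right\|$, so that $[s_{ij}]$ coincides with the Abu-Omar--Kittaneh matrix $\left[t_{ij}^{(3)}\right]$, are welcome clarifications that the paper leaves implicit.
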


 \begin{proof}
 	Let ${\bf{x}} = \left[ {\begin{array}{*{20}c}
 		{x_1 } & {x_2 } &  \cdots  & {x_n }  \\
 		\end{array}} \right]^T \in \bigoplus _{i = 1}^n \mathscr{H}_i$ with $\|x\|=1$.  For simplicity setting $k_i=n-i+1$, then we have
 	\begin{align*}
 	\left| {\left\langle {{\bf{S}}{\bf{x}},{\bf{x}}} \right\rangle } \right| &= \left| {\sum\limits_{i,j = 1}^n {\left\langle {S_{ij} x_j ,x_i } \right\rangle } } \right| \\
 	&\le \sum\limits_{i,j = 1}^n {\left| {\left\langle {S_{ij} x_j ,x_i } \right\rangle } \right|}  \nonumber
 	\\
 	&\le \sum\limits_{i  = 1}^n {\left| {\left\langle {S_{ii} x_i ,x_i } \right\rangle } \right|} + \sum\limits_{i=1}^n {\left| {\left\langle {S_{ik_i } x_{k_i} ,x_{i} } \right\rangle } \right|}+ \sum\limits_{j \ne i,k_i }^n {\left| {\left\langle {S_{ij} x_j ,x_i } \right\rangle } \right|} \nonumber
 	\\
 	&\le \sum\limits_{i  = 1}^n {\left| {\left\langle {S_{ii} x_i ,x_i } \right\rangle } \right|} +  
 	\sum\limits_{i = 1,i \ne k_i }^n {\left\langle {\left| {S_{ik_i } } \right|x_{k_i } ,x_{k_i } } \right\rangle ^{\frac{1}{2}} \left\langle {\left| {S_{ik_i }^* } \right|x_i ,x_i } \right\rangle ^{\frac{1}{2}} } + \sum\limits_{j \ne i,k_i }^n {\left| {\left\langle {S_{ij} x_j ,x_i } \right\rangle } \right|} \nonumber
 	\\
 	&\le  \sum\limits_{i = 1}^n {   w \left( {S_{ii} } \right)\left\| {x_i } \right\|^2 } +
 	\sum\limits_{i = 1,i \ne k_i }^n {w^{\frac{1}{2}} \left( {\left| {S_{ik_i } } \right|} \right)w^{\frac{1}{2}} \left( {\left| {S_{ik_i }^* } \right|} \right)\left\| {x_{k_i } } \right\|\left\| {x_i } \right\|}  +\sum\limits_{j \ne i}^n {\left\| {S_{ij} } \right\|\left\| {x_i } \right\|\left\| {x_j } \right\|}
 	\\
 	&\le \sum_{i,j=1}^n {s_{ij}\left\| {x_i } \right\|\left\| {x_j } \right\|}
 	\\
 	&= \left\langle {\left[ {s_{ij} } \right]y,y} \right\rangle,
 	\end{align*}
 	where $y=\left( {\begin{array}{*{20}c}{\left\| {x_1 } \right\|} & {\left\| {x_2 } \right\|} &  \cdots  & {\left\| {x_n } \right\|}  \\  \end{array}} \right)^T$. Taking the supremum over ${\bf{x}}  \in \bigoplus \mathscr{H}_i$, we obtain the desired result.
 \end{proof}

In the next result, we present  Davis--Wielandt radius inequality  for $n \times n$ matrix Operators.
 
%%%%%%%%%%%%%%%%%%%%%%%%%%%%%%%%%%%%%%%%%%%%%%%%%%%%%%%%%%%%%%%%%%%%%%%%%%%%%%%%%%%%%%%%%%%%%%%%%%%%%%%%%%%%%%%%%%%%%%%%%%%%%

\begin{theorem}
	\label{thm8}Let  ${\bf T}=\left[T_{ij}\right]\in
	\mathscr{B}\left(\bigoplus _{i = 1}^n \mathscr{H}_i\right)$ such
	that $T_{ij}\in\mathscr{B}\left(\mathscr{H}_j,
	\mathscr{H}_i\right)$. Then
	\begin{align}
	 dw \left({\bf T}\right)\le  w \left( \left[t_{ij}\right] \right),\label{eq3.3}
	\end{align}
	where
	\begin{align*}
	t_{ij}=   \left\{ \begin{array}{l}
	w  \left( {T_{ii} } \right)  +\left\| {T_{ii} } \right\|^2    , \,\,\,\,\,\,\,\,\,\, j = i  
	\\
	\\
	\left\| {T_{ij} } \right\|  +\left\| {T_{ij} } \right\|^2   ,\,\,\,\,\,\,\,\,\,\,\,\,\, j \ne i 
	\end{array} \right..
	\end{align*}
\end{theorem}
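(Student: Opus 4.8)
The plan is to mimic the reduction used in the proof of Theorem \ref{thm7}: pass from the block operator ${\bf T}$ to the nonnegative scalar matrix $\left[t_{ij}\right]$ by testing against a fixed unit vector and replacing each component by its norm. Concretely, fix ${\bf x}=\left[x_1,\ldots,x_n\right]^T\in\bigoplus_{i=1}^n\mathscr{H}_i$ with $\left\|{\bf x}\right\|=1$, and set $\hat{x}=\left(\left\|x_1\right\|,\ldots,\left\|x_n\right\|\right)^T\in\mathbb{R}^n$, which is a real unit vector. Since
\begin{align*}
dw^2\left({\bf T}\right)=\mathop{\sup}\limits_{\left\|{\bf x}\right\|=1}\left\{\left|\left\langle{\bf T}{\bf x},{\bf x}\right\rangle\right|^2+\left\|{\bf T}{\bf x}\right\|^4\right\},
\end{align*}
it suffices to bound the scalar quantity $\sqrt{\left|\left\langle{\bf T}{\bf x},{\bf x}\right\rangle\right|^2+\left\|{\bf T}{\bf x}\right\|^4}$ by $\left\langle\left[t_{ij}\right]\hat{x},\hat{x}\right\rangle$ for every such ${\bf x}$; taking the supremum and using $\left\langle\left[t_{ij}\right]\hat{x},\hat{x}\right\rangle\le w\left(\left[t_{ij}\right]\right)\left\|\hat{x}\right\|^2=w\left(\left[t_{ij}\right]\right)$ then yields \eqref{eq3.3}. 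The key device for splitting the two ingredients of $dw$ will be the elementary estimate $\sqrt{p^2+q^2}\le p+q$ valid for $p,q\ge0$.

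For the numerical-range ingredient I would expand $\left\langle{\bf T}{\bf x},{\bf x}\right\rangle=\sum_{i,j=1}^n\left\langle T_{ij}x_j,x_i\right\rangle$ and apply the triangle inequality. The diagonal terms are controlled by $\left|\left\langle T_{ii}x_i,x_i\right\rangle\right|\le w\left(T_{ii}\right)\left\|x_i\right\|^2$ and the off-diagonal terms by $\left|\left\langle T_{ij}x_j,x_i\right\rangle\right|\le\left\|T_{ij}\right\|\left\|x_i\right\|\left\|x_j\right\|$, so that
\begin{align*}
\left|\left\langle{\bf T}{\bf x},{\bf x}\right\rangle\right|\le\sum_{i=1}^n w\left(T_{ii}\right)\left\|x_i\right\|^2+\sum_{i\ne j}\left\|T_{ij}\right\|\left\|x_i\right\|\left\|x_j\right\|=\left\langle\left[a_{ij}\right]\hat{x},\hat{x}\right\rangle,
\end{align*}
where $a_{ii}=w\left(T_{ii}\right)$ and $a_{ij}=\left\|T_{ij}\right\|$ for $i\ne j$. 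This reproduces exactly the ``$\left\|T_{ij}\right\|$-part'' of $t_{ij}$ and is the same mechanism already used for Theorem \ref{thm7}.

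For the norm ingredient I would seek the companion bound $\left\|{\bf T}{\bf x}\right\|^2\le\sum_{i,j=1}^n\left\|T_{ij}\right\|^2\left\|x_i\right\|\left\|x_j\right\|=\left\langle\left[\left\|T_{ij}\right\|^2\right]\hat{x},\hat{x}\right\rangle$, which supplies precisely the ``$\left\|T_{ij}\right\|^2$-part'' of $t_{ij}$. Granting both estimates, the combination step is immediate: writing $p=\left|\left\langle{\bf T}{\bf x},{\bf x}\right\rangle\right|$ and $q=\left\|{\bf T}{\bf x}\right\|^2$,
\begin{align*}
\sqrt{\left|\left\langle{\bf T}{\bf x},{\bf x}\right\rangle\right|^2+\left\|{\bf T}{\bf x}\right\|^4}\le p+q\le\left\langle\left[a_{ij}\right]\hat{x},\hat{x}\right\rangle+\left\langle\left[\left\|T_{ij}\right\|^2\right]\hat{x},\hat{x}\right\rangle=\left\langle\left[t_{ij}\right]\hat{x},\hat{x}\right\rangle,
\end{align*}
since $t_{ij}=a_{ij}+\left\|T_{ij}\right\|^2$, and the proof closes as described above.

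The hard part will be the norm estimate of the previous paragraph. The naive route, namely $\left\|{\bf T}{\bf x}\right\|^2=\sum_i\left\|\sum_j T_{ij}x_j\right\|^2\le\sum_i\left(\sum_j\left\|T_{ij}\right\|\left\|x_j\right\|\right)^2$, produces after expansion the quadratic form $\left\langle\hat{T}^{\,\top}\hat{T}\hat{x},\hat{x}\right\rangle$ with $\hat{T}=\left[\left\|T_{ij}\right\|\right]$, whose $(j,k)$ coefficient is $\sum_i\left\|T_{ij}\right\|\left\|T_{ik}\right\|$ rather than the diagonal-in-blocks weight $\left\|T_{ij}\right\|^2$ demanded by $t_{ij}$; these two quadratic forms do not agree, and the cross terms generated by the triangle inequality are exactly what must be absorbed. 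I therefore expect the crux of the argument to be a careful treatment of the mixed terms $\left\langle T_{ij}x_j,T_{ik}x_k\right\rangle$ (for $j\ne k$) so that the norm contribution can be matched to $\left[\left\|T_{ij}\right\|^2\right]$; this is the step where the delicacy of the estimate — and any room for a sharper or corrected bound — resides.
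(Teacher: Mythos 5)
Your proposal is incomplete at exactly the point you flagged, and the gap is real: the estimate you need, $\left\|{\bf T}{\bf x}\right\|^2\le\sum_{i}\left\|T_{ii}\right\|^2\left\|x_i\right\|^2+\sum_{j\ne i}\left\|T_{ij}\right\|^2\left\|x_i\right\|\left\|x_j\right\|$, is false in general, so no ``careful treatment of the mixed terms'' can rescue it. Take $n=2$, $\mathscr{H}_1=\mathscr{H}_2=\mathbb{C}$, $T_{11}=T_{12}=1$, $T_{21}=T_{22}=0$, and $x_1=x_2=1/\sqrt{2}$: then $\left\|{\bf T}{\bf x}\right\|^2=\left|x_1+x_2\right|^2=2$, while the right-hand side equals $\tfrac{1}{2}+\tfrac{1}{2}=1$. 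The cross terms $\left\langle T_{ij}x_j,T_{ik}x_k\right\rangle$ with $j\ne k$ that you identified are precisely what gets discarded, and they cannot be absorbed into the quadratic form $\left\langle\left[\left\|T_{ij}\right\|^2\right]\hat{x},\hat{x}\right\rangle$.

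You should also know that the paper's own proof founders on the same rock. It disposes of this step by writing $\left\langle{\bf T}^*{\bf T}{\bf x},{\bf x}\right\rangle=\sum_{i,j=1}^{n}\left\langle T_{ij}^*T_{ij}x_j,x_i\right\rangle$ and then estimating exactly as in \eqref{eq3.4}; but this identity is wrong, since the $(i,j)$ block of ${\bf T}^*{\bf T}$ is $\sum_{k}T_{ki}^*T_{kj}$, not $T_{ij}^*T_{ij}$ (for $i\ne j$ the operator $T_{ij}^*T_{ij}$ maps into $\mathscr{H}_j$, so it cannot even serve as an $(i,j)$ block). In the example above, the paper's combined intermediate bound, the sum of \eqref{eq3.4} and \eqref{eq3.5}, would assert $3=\left|\left\langle{\bf T}{\bf x},{\bf x}\right\rangle\right|+\left\langle{\bf T}^*{\bf T}{\bf x},{\bf x}\right\rangle\le 2$, which is false. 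So your plan mirrors the paper's argument step for step, and your inability to close the norm estimate is not a defect of your execution: it is an error in the published proof, which silently drops the very cross terms you worried about. A correct bound along these lines is the one you called the naive route: $\left\|{\bf T}{\bf x}\right\|^2\le\left\langle\hat{T}^{\top}\hat{T}\hat{x},\hat{x}\right\rangle$ with $\hat{T}=\left[\left\|T_{ij}\right\|\right]$, which yields $dw\left({\bf T}\right)\le w\left(\left[a_{ij}\right]+\hat{T}^{\top}\hat{T}\right)$ where $a_{ii}=w\left(T_{ii}\right)$ and $a_{ij}=\left\|T_{ij}\right\|$ for $i\ne j$; whether the inequality \eqref{eq3.3} with the matrix $\left[t_{ij}\right]$ as stated is true at all remains unproved.
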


\begin{proof}
	Let $x = \left[ {\begin{array}{*{20}c}
		{x_1 } & {x_2 } &  \cdots  & {x_n }  \\
		\end{array}} \right]^T \in \bigoplus _{i = 1}^n \mathscr{H}_i$ with $\|x\|=1$. 
	Then we have
	\begin{align*}
	dw\left({\bf T}\right)&=\mathop {\sup }\limits_{\scriptstyle {\bf x} \in \mathscr{H} \hfill \atop 
		\scriptstyle \left\| {\bf x} \right\| = 1 \hfill}\sqrt{\left| {\left\langle {{\bf T}{\bf x},{\bf x}} \right\rangle } \right|^2+\left| {\left\langle {{\bf T}^*{\bf T}{\bf x},{\bf x}} \right\rangle } \right|^2} 
	\\
	&\le\mathop {\sup }\limits_{\scriptstyle {\bf x} \in \mathscr{H} \hfill \atop 
		\scriptstyle \left\| {\bf x} \right\| = 1 \hfill}    \left\{\left| {\left\langle {{\bf T}{\bf x},{\bf x}} \right\rangle } \right| +\left| {\left\langle {{\bf T}^*{\bf T}{\bf x},{\bf x}} \right\rangle } \right| \right\} \qquad\qquad \text{(since $\sqrt{a+b}\le \sqrt{a}+\sqrt{b}$)}
	\end{align*}
	But since
	\begin{align}
	\left| {\left\langle {{\bf T}{\bf x},{\bf x}} \right\rangle } \right|   = \left| {\sum\limits_{i,j = 1}^n {\left\langle {T_{ij} x_j ,x_i } \right\rangle } } \right|  
	&\le \sum\limits_{i,j = 1}^n {\left| {\left\langle {T_{ij} x_j ,x_i } \right\rangle } \right| }  \nonumber
	\nonumber	\\
	&\le    \sum\limits_{i  = 1}^n {\left| {\left\langle {T_{ii} x_i ,x_i } \right\rangle } \right| }  +   \sum\limits_{j \ne i  }^n {\left| {\left\langle {T_{ij} x_j ,x_i } \right\rangle } \right| } \nonumber
	\nonumber\\
	&\le    \sum\limits_{i = 1}^n {   w  \left( {T_{ii} } \right)\left\| {x_i } \right\|^2  }  + \sum\limits_{j \ne i}^n {\left\| {T_{ij} } \right\| \left\| {x_i } \right\| \left\| {x_j } \right\| }\label{eq3.4}
	\end{align}
	where $y=\left( {\begin{array}{*{20}c}{\left\| {x_1 } \right\|} & {\left\| {x_2 } \right\|} &  \cdots  & {\left\| {x_n } \right\|}  \\  \end{array}} \right)^T$.

	Similarly, we have
	\begin{align}
	\left| {\left\langle {{\bf T}^*{\bf T}{\bf x},{\bf x}} \right\rangle } \right|  &= \left| {\sum\limits_{i,j = 1}^n {\left\langle {T_{ij}^* T_{ij}x_j,x_i } \right\rangle } } \right|  \nonumber
	\\
	&\le    \sum\limits_{i = 1}^n {   w  \left( {T^*_{ii}T_{ii} } \right)\left\| {x_i } \right\|^2  }  + \sum\limits_{j \ne i}^n {\left\| {T^*_{ij} T_{ij} } \right\| \left\| {x_i } \right\| \left\| {x_j } \right\| }.\label{eq3.5}
	\end{align}
	Adding \eqref{eq3.4} and \eqref{eq3.5}, we get
	\begin{align*}
	dw \left({\bf T}\right)&\le \mathop {\sup }\limits_{\scriptstyle {\bf x} \in \mathscr{H} \hfill \atop 
		\scriptstyle \left\| {\bf x} \right\| = 1 \hfill} \left\{\left| {\left\langle {{\bf T}{\bf x},{\bf x}} \right\rangle } \right| +\left| {\left\langle {{\bf T}^*{\bf T}{\bf x},{\bf x}} \right\rangle } \right| \right\}
	\\
	&\le  \sum\limits_{i = 1}^n { \left(  w  \left( {T_{ii} } \right)  +w  \left( {T^*_{ii}T_{ii} } \right) \right)\left\| {x_i } \right\|^2  }  +  \sum\limits_{j \ne i}^n {\left(\left\| {T_{ij} } \right\|  +\left\| {T^*_{ij} T_{ij} } \right\| \right)\left\| {x_i } \right\| \left\| {x_j } \right\| }
	\\
	&=  \sum\limits_{i = 1}^n { \left(  w  \left( {T_{ii} } \right)  +\left\| {T_{ii} } \right\|^2 \right)\left\| {x_i } \right\|^2  }  +  \sum\limits_{j \ne i}^n {\left(\left\| {T_{ij} } \right\|  +\left\| {T_{ij} } \right\|^2 \right)\left\| {x_i } \right\| \left\| {x_j } \right\| }
	\\
	&\le  \sum_{i,j=1}^n {t_{ij}\left\| {x_i } \right\|\left\| {x_j } \right\|}
	\\
	&=  \left\langle {\left[ {t_{ij} } \right]y,y} \right\rangle. 
	\end{align*}		
	Taking the supremum over ${\bf x} \in \bigoplus \mathscr{H}_i$, we obtain the right-hand side inequality in \eqref{eq3.3}, and this completes the proof.	
\end{proof}

\begin{corollary}
\label{cor2}	Let ${\bf{T}}={\left[ {\begin{array}{*{20}c}
			{T_{11} } & {T_{12} }  \\
			{T_{21} } & {T_{22} }  \\
			\end{array}} \right]} \in \mathscr{B}\left(\mathscr{H}_1\oplus\mathscr{H}_2\right)$. Then	
	\begin{align}
 \label{eq3.6} dw  \left( {{\bf{T}}} \right) \le   \frac{1}{2}\left( a + d + \sqrt {\left( {a - d} \right)^2  +\left(b+c\right)^2 }  \right),
	\end{align}
	where,
\begin{align*}
 a  = w \left( {T_{11} } \right) +\left\| {T_{11} } \right\|^2,\,\,
b  = \left\| {T_{12} } \right\| +\left\| {T_{12} } \right\|^2,
\,\,  
c = \left\| {T_{21} } \right\| +\left\| {T_{21} } \right\|^2,
\,\,
d   = w \left( {T_{22} } \right) +  \left\| {T_{22} } \right\|^2.\nonumber
\end{align*}
\end{corollary}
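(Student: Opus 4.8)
The plan is to read off \eqref{eq3.6} as the case $n=2$ of Theorem~\ref{thm8}, and then to estimate explicitly the numerical radius of the scalar $2\times 2$ matrix that Theorem~\ref{thm8} produces. First I would specialize Theorem~\ref{thm8} to the $2\times 2$ block operator ${\bf T}$. With the abbreviations introduced in the statement, the entries of the majorant $\left[ t_{ij}\right]$ in \eqref{eq3.3} are exactly $t_{11}=a$, $t_{12}=b$, $t_{21}=c$ and $t_{22}=d$, all nonnegative real numbers (being sums of norms, numerical radii, and their squares). Hence Theorem~\ref{thm8} gives immediately
\begin{align*}
dw\left( {\bf T} \right)\le w\left( \begin{bmatrix} a & b \\ c & d \end{bmatrix} \right),
\end{align*}
so the whole problem reduces to bounding the numerical radius of a $2\times 2$ matrix with nonnegative entries.

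For this scalar estimate I would argue directly from the definition of $w$. Write $M=\left[ t_{ij}\right]$ and let $x=\left( x_1,x_2\right)^T\in\mathbb{C}^2$ be a unit vector, so that $\left| x_1\right|^2+\left| x_2\right|^2=1$. Expanding the quadratic form,
\begin{align*}
\left\langle {Mx,x} \right\rangle = a\left| x_1\right|^2 + d\left| x_2\right|^2 + b\, x_2\overline{x_1} + c\, x_1\overline{x_2}.
\end{align*}
Setting $z=x_2\overline{x_1}$, the off-diagonal contribution is $bz+c\overline{z}$, whose modulus obeys $\left| bz+c\overline{z}\right|\le\left( b+c\right)\left| z\right|=\left( b+c\right)\left| x_1\right|\left| x_2\right|$. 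This is exactly where the nonnegativity $b,c\ge 0$ enters, since it yields $\left| b-c\right|\le b+c$ and thereby collapses the cross term to a clean multiple of $b+c$. Consequently, putting $s=\left| x_1\right|^2$ and $t=\left| x_2\right|^2$ (so that $s+t=1$ and $\left| x_1\right|\left| x_2\right|=\sqrt{st}$),
\begin{align*}
\left| {\left\langle {Mx,x} \right\rangle}\right| \le as + dt + \left( b+c\right)\sqrt{st}.
\end{align*}

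It then remains to maximize $f\left( s\right)=as+d\left( 1-s\right)+\left( b+c\right)\sqrt{s\left( 1-s\right)}$ over $s\in\left[ 0,1\right]$. The substitution $s=\tfrac{1}{2}\left( 1+\cos\phi\right)$, $\phi\in\left[ 0,\pi\right]$, converts this into $f=\tfrac{a+d}{2}+\tfrac{a-d}{2}\cos\phi+\tfrac{b+c}{2}\sin\phi$, and the elementary amplitude bound $\alpha\cos\phi+\beta\sin\phi\le\sqrt{\alpha^2+\beta^2}$ gives $f\le\tfrac{a+d}{2}+\tfrac{1}{2}\sqrt{\left( a-d\right)^2+\left( b+c\right)^2}$. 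Taking the supremum over all unit $x$ produces $w\left( M\right)\le\tfrac{1}{2}\left( a+d+\sqrt{\left( a-d\right)^2+\left( b+c\right)^2}\right)$, which, combined with the reduction of the first paragraph, is precisely \eqref{eq3.6}. The only genuinely delicate point is the cross-term estimate $\left| bz+c\overline{z}\right|\le\left( b+c\right)\left| z\right|$: it is the nonnegativity of the $t_{ij}$ that forces the bound to depend on $b+c$ rather than on $\left| b-c\right|$, and a brief check shows the amplitude bound is attained at some $\phi\in\left[ 0,\pi\right]$ (because $b+c\ge 0$ keeps $\sin\phi\ge 0$ at the optimizer), so the resulting inequality is in fact sharp.
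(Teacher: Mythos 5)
Your proposal is correct, and its first half is exactly the paper's reduction: specialize Theorem~\ref{thm8} to $n=2$, obtaining $dw\left({\bf T}\right)\le w\left(\left[\begin{smallmatrix} a & b \\ c & d\end{smallmatrix}\right]\right)$ with the nonnegative entries $a,b,c,d$ of the statement. Where you diverge is in the scalar step. The paper disposes of $w\left(\left[\begin{smallmatrix} a & b \\ c & d\end{smallmatrix}\right]\right)$ by invoking, without proof, the known identity for entrywise-nonnegative $2\times 2$ matrices
\begin{align*}
w\left( \begin{bmatrix} a & b \\ c & d \end{bmatrix} \right) = r\left( \begin{bmatrix} a & \tfrac{b+c}{2} \\ \tfrac{b+c}{2} & d \end{bmatrix} \right)
= \frac{1}{2}\left( a + d + \sqrt{\left( a - d\right)^2 + \left( b + c\right)^2} \right),
\end{align*}
i.e.\ the numerical radius equals the spectral radius of the symmetrization, which for a real symmetric matrix is computed from the eigenvalue formula. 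You instead prove the needed inequality from scratch: expand $\left\langle Mx,x\right\rangle$, collapse the cross term via $\left| bz + c\overline{z}\right|\le \left( b+c\right)\left| z\right|$ (valid precisely because $b,c\ge 0$), and optimize $as+dt+\left( b+c\right)\sqrt{st}$ over $s+t=1$ with the trigonometric substitution and the amplitude bound. This costs a few extra lines but makes the corollary self-contained, and your closing sharpness remark (phases can be chosen real positive, and $\sin\phi\ge 0$ at the optimizer since $b+c\ge 0$) in fact recovers the paper's equality rather than just the inequality. Both arguments are sound; the paper's is shorter because it outsources the scalar fact, yours verifies it.
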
	
\begin{proof}
Take $n=2$ in Theorem \ref{thm8}.	Let $a,b,c,d$ be as defined above. Then
	\begin{align*}
	dw  \left( {\left[ {\begin{array}{*{20}c}
			{T_{11} } & {T_{12} }  \\
			{T_{21} } & {T_{22} }  \\
			\end{array}} \right]} \right) &\le  w\left( {\left[ {\begin{array}{*{20}c}
			a & b  \\
			c & d  \\
			\end{array}} \right]} \right) \\ 
	&=  r\left( {\left[ {\begin{array}{*{20}c}
			a & {\frac{{b + c}}{2}}  \\
			{\frac{{b + c}}{2}} & d  \\
			\end{array}} \right]} \right) \\ 
	&=  \frac{1}{2}\left( a + d + \sqrt {\left( {a - d} \right)^2  +\left(b+c\right)^2 }  \right).
	\end{align*}
as required.	
\end{proof}	
\begin{corollary}
\label{cor3}	Let ${\left[ {\begin{array}{*{20}c}
			{T_{11} } & {0}  \\
			{0} & {T_{22} }  \\
			\end{array}} \right]} \in \mathscr{B}\left(\mathscr{H}_1\oplus\mathscr{H}_2\right)$, then
	\begin{align}
	dw  \left( {\left[ {\begin{array}{*{20}c}
			{T_{11} } & {0}  \\
			{0} & {T_{22} }  \\
			\end{array}} \right]} \right) \le  \max\left\{ w \left( {T_{11} } \right) +  \left\|T_{11}\right\|^2  , w  \left( {T_{22} } \right) +  \left\|T_{22}\right\|^2  \right\} 
	\end{align}
	In  special case, if $\mathscr{H}_1=\mathscr{H}_2$ and $ T_{11}=T_{22}=T$, then	
	\begin{align}
	dw  \left( {\left[ {\begin{array}{*{20}c}
			{T  } & {0}  \\
			{0} & {T  }  \\
			\end{array}} \right]} \right) \le  w  \left( {T} \right) +  \left\|T\right\|^2  
	\end{align}
\end{corollary}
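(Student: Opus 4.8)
The plan is to derive Corollary~\ref{cor3} as the block-diagonal specialization of Corollary~\ref{cor2}, so that no new estimate is required. First I would apply Corollary~\ref{cor2} in the case $T_{12}=T_{21}=0$. With the notation introduced there, the vanishing of the off-diagonal entries forces
\begin{align*}
b = \left\|T_{12}\right\| + \left\|T_{12}\right\|^2 = 0, \qquad c = \left\|T_{21}\right\| + \left\|T_{21}\right\|^2 = 0,
\end{align*}
while $a = w(T_{11}) + \left\|T_{11}\right\|^2$ and $d = w(T_{22}) + \left\|T_{22}\right\|^2$ are unaffected.

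Substituting $b+c=0$ into the bound \eqref{eq3.6}, the quantity under the radical collapses to $(a-d)^2$, whence $\sqrt{(a-d)^2+(b+c)^2} = \left|a-d\right|$. The crux of the argument is then the elementary identity
\begin{align*}
\frac{1}{2}\left(a + d + \left|a-d\right|\right) = \max\left\{a, d\right\},
\end{align*}
which immediately produces
\begin{align*}
dw\left(\left[\begin{array}{cc} T_{11} & 0 \\ 0 & T_{22} \end{array}\right]\right) \le \max\left\{w\left(T_{11}\right) + \left\|T_{11}\right\|^2, \ w\left(T_{22}\right) + \left\|T_{22}\right\|^2\right\}.
\end{align*}

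For the special case I would simply set $\mathscr{H}_1 = \mathscr{H}_2$ and $T_{11} = T_{22} = T$; then $a = d = w(T) + \left\|T\right\|^2$, so the maximum collapses to this common value and yields $dw\left(\diag(T,T)\right) \le w(T) + \left\|T\right\|^2$. There is no genuine obstacle here: the lone point deserving care is verifying that the off-diagonal contributions disappear once $T_{12}$ and $T_{21}$ vanish, which is transparent from the explicit expressions for $b$ and $c$ in Corollary~\ref{cor2}, together with the observation that $\sqrt{(a-d)^2}=\left|a-d\right|$ holds for the \emph{real} scalars $a,d$ (each being a sum of a numerical radius and a squared norm, hence nonnegative reals).
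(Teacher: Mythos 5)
Your proof is correct and takes essentially the same approach as the paper: both arguments specialize Corollary~\ref{cor2} to the case $T_{12}=T_{21}=0$ (so $b=c=0$) and invoke the identity $\frac{1}{2}\left(a+d+\left|a-d\right|\right)=\max\left\{a,d\right\}$ for the nonnegative reals $a,d$. The only cosmetic difference is that the paper first states the bound with $w\left(T_{ii}^{*}T_{ii}\right)$ in place of $\left\|T_{ii}\right\|^{2}$ and then passes to $\left\|T_{ii}\right\|^{2}$, which changes nothing since $w\left(\left|T_{ii}\right|^{2}\right)=\left\|T_{ii}\right\|^{2}$ for the positive operator $T_{ii}^{*}T_{ii}$.
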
	
\begin{proof}	
From Corollary \ref{cor2}, we have	
	\begin{align*}
	dw  \left( {\left[ {\begin{array}{*{20}c}
			{T_{11} } & {0}  \\
			{0} & {T_{22} }  \\
			\end{array}} \right]} \right) &\le \max\left\{w \left( {T_{11} } \right) + w \left( {T_{11}^* T_{11} } \right),w  \left( {T_{22} } \right) + w  \left( {T_{22}^* T_{22} } \right)\right\}
	\\
	&=\max\left\{w  \left( {T_{11} } \right) + w  \left( { \left|T_{11}\right|^2 } \right),w  \left( {T_{22} } \right) + w  \left( {\left|T_{22}\right|^2 } \right)\right\}
	\\
	&\le\max\left\{w  \left( {T_{11} } \right) +  \left\|T_{11}\right\|^2 ,w  \left( {T_{22} } \right) +  \left\|T_{22}\right\|^2 \right\},
	\end{align*}	
as required.	
\end{proof}

\begin{corollary}
	Let ${\bf{T}}={\left[ {\begin{array}{*{20}c}
			{T } & {S}  \\
			{S } & {T}  \\
			\end{array}} \right]} \in \mathscr{B}\left(\mathscr{H} \oplus\mathscr{H} \right)$. Then	
	\begin{align}
	dw  \left( {{\bf{T}}} \right) \le    w  \left( {T} \right) +  \left\| {T } \right\|^2+ \left\| {S} \right\|   + \left\| {S } \right\|^2
	\end{align}

\end{corollary}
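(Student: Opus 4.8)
The plan is to obtain this bound as an immediate specialization of Corollary \ref{cor2} to the symmetric block structure of ${\bf{T}}$. Since ${\bf{T}}$ is the $2\times 2$ operator matrix with diagonal entries $T_{11}=T_{22}=T$ and off-diagonal entries $T_{12}=T_{21}=S$, I would first read off the four scalars $a,b,c,d$ appearing in \eqref{eq3.6}. With these choices they become
\begin{align*}
a = d = w\left(T\right)+\left\|T\right\|^2, \qquad b = c = \left\|S\right\|+\left\|S\right\|^2,
\end{align*}
where I use that both off-diagonal blocks equal $S$, so that $\left\|T_{12}\right\|=\left\|T_{21}\right\|=\left\|S\right\|$.

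The key step is then simply to substitute into the bound \eqref{eq3.6} and exploit the equalities $a=d$ and $b=c$. Because $a-d=0$, the discriminant under the square root collapses to $\left(b+c\right)^2=\left(2b\right)^2$, and since $b\ge 0$ the square root equals $2b$. Carrying this out,
\begin{align*}
dw\left({\bf{T}}\right) &\le \frac{1}{2}\left(a+d+\sqrt{\left(a-d\right)^2+\left(b+c\right)^2}\right) \\
&= \frac{1}{2}\left(2a+\sqrt{\left(2b\right)^2}\right) = a+b = w\left(T\right)+\left\|T\right\|^2+\left\|S\right\|+\left\|S\right\|^2,
\end{align*}
which is exactly the claimed inequality.

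There is no genuine obstacle here: the entire content lies in the already-established Corollary \ref{cor2} (itself a consequence of Theorem \ref{thm8}), and the present statement is a routine evaluation of the general two-by-two bound on a matrix whose symmetry forces $a=d$ and $b=c$. The only point worth verifying carefully is that the nonnegativity of $b$ legitimizes replacing $\sqrt{\left(2b\right)^2}$ by $2b$ rather than by its absolute value, which is immediate since operator norms are nonnegative.
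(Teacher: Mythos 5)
Your proposal is correct and matches the paper's own proof: the paper likewise specializes Corollary \ref{cor2} with $T_{11}=T_{22}=T$ and $T_{12}=T_{21}=S$, notes $a=d$ and $b=c$, and concludes $dw\left({\bf{T}}\right)\le a+b$. Your only addition is spelling out explicitly why the square root collapses to $2b$, which the paper leaves implicit.
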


\begin{proof}
From Corollary \ref{cor2}, we have $T_{11}=T_{22}=T$ and $T_{12}=T_{21}=S$, therefore
\begin{align*}
a  = w \left( {T} \right) +\left\| {T} \right\|^2=d,\qquad
b  = \left\| {S} \right\| +\left\| {S} \right\|^2=c.
\end{align*}
Thus,
\begin{align*}
dw  \left( {\left[ {\begin{array}{*{20}c}
		{T } & {S}  \\
		{S } & {T}  \\
		\end{array}} \right]} \right)\le  a+b = w \left( {T} \right) +\left\| {T} \right\|^2+ \left\| {S} \right\| +\left\| {S} \right\|^2,
\end{align*}
as required.
\end{proof}

%%%%%%%%%%%%%%%%%%%%%%%%%%%%%%%%%%%%%%%%%%%%%%%%%%%%%%%%%%%%%%%%%

A refinement of Theorem \ref{thm8} is formulated as follows:

\begin{theorem}
	\label{thm9}Let  ${\bf T}=\left[T_{ij}\right]\in
	\mathscr{B}\left(\bigoplus _{i = 1}^n \mathscr{H}_i\right)$ such
	that $T_{ij}\in\mathscr{B}\left(\mathscr{H}_j,
	\mathscr{H}_i\right)$. Then
	\begin{align}
	\label{eq3.10}	\frac{1}{\sqrt{2}}\left\|{{\bf{T}} + {\bf{T}}^* {\bf{T}}} \right\|\le dw \left({\bf T}\right)\le  w^{1/2} \left( \left[t_{ij}\right] \right),
	\end{align}
	where
	\begin{align*}
	t_{ij}=  n\cdot\left\{ \begin{array}{l}
	w^2 \left( {T_{ii} } \right)  + \left\| {T_{ii} } \right\|^4    , \,\,\,\,\,\,\,\,\,\, j = i  
	\\
	\\
	\left\| {T_{ij} } \right\|^2 +\left\| {T_{ij} } \right\|^4  ,\,\,\,\,\,\,\,\,\,\,\,\,\, j \ne i 
	\end{array} \right..
	\end{align*}
\end{theorem}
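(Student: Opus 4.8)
The plan is to split \eqref{eq3.10} into its two halves and dispatch the lower bound immediately. For the left-hand inequality I would simply apply the first estimate of \eqref{eq2.11} (established in Theorem \ref{thm4}) with $S=\mathbf{T}$: that inequality holds for \emph{any} operator on a Hilbert space, and $\bigoplus_{i=1}^n\mathscr{H}_i$ is itself a Hilbert space, so it yields $\frac{1}{\sqrt 2}\|\mathbf{T}+\mathbf{T}^*\mathbf{T}\|\le dw(\mathbf{T})$ with no extra work. For the upper bound I would start from the representation
\begin{align*}
dw^2(\mathbf{T})=\sup_{\|\mathbf{x}\|=1}\left\{\left|\langle\mathbf{T}\mathbf{x},\mathbf{x}\rangle\right|^2+\left|\langle\mathbf{T}^*\mathbf{T}\mathbf{x},\mathbf{x}\rangle\right|^2\right\},
\end{align*}
which is valid because $\|\mathbf{T}\mathbf{x}\|^4=\langle\mathbf{T}^*\mathbf{T}\mathbf{x},\mathbf{x}\rangle^2$.

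Next I would fix a unit vector $\mathbf{x}=[x_1,\dots,x_n]^T$ and set $y=(\|x_1\|,\dots,\|x_n\|)^T$, so that $\|y\|=1$. Exactly as in the proof of Theorem \ref{thm8} (inequalities \eqref{eq3.4} and \eqref{eq3.5}, together with $\|T_{ij}^*T_{ij}\|=\|T_{ij}\|^2$ and $w(T_{ii}^*T_{ii})=\|T_{ii}\|^2$, the latter since $T_{ii}^*T_{ii}\ge0$), one obtains the two block estimates
\begin{align*}
\left|\langle\mathbf{T}\mathbf{x},\mathbf{x}\rangle\right|\le P:=\sum_{i,j=1}^n m_{ij}\|x_i\|\|x_j\|,\qquad \left|\langle\mathbf{T}^*\mathbf{T}\mathbf{x},\mathbf{x}\rangle\right|\le Q:=\sum_{i,j=1}^n q_{ij}\|x_i\|\|x_j\|,
\end{align*}
where $m_{ii}=w(T_{ii})$, $m_{ij}=\|T_{ij}\|$ for $i\ne j$, and $q_{ii}=\|T_{ii}\|^2$, $q_{ij}=\|T_{ij}\|^2$ for $i\ne j$. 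The crucial bookkeeping observation is that $t_{ij}=n\,(m_{ij}^2+q_{ij}^2)$ in every position, diagonal and off-diagonal alike.

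The decisive step is to square $P$ and $Q$ and recover a single quadratic form. Writing each summand of $P$ as $\bigl(m_{ij}(\|x_i\|\|x_j\|)^{1/2}\bigr)\bigl((\|x_i\|\|x_j\|)^{1/2}\bigr)$ and applying the Cauchy--Schwarz inequality over the index set $\{(i,j)\}$, I get
\begin{align*}
P^2\le\left(\sum_{i,j=1}^n m_{ij}^2\|x_i\|\|x_j\|\right)\left(\sum_{i,j=1}^n\|x_i\|\|x_j\|\right).
\end{align*}
Since $\sum_{i,j}\|x_i\|\|x_j\|=(\sum_i\|x_i\|)^2\le n\sum_i\|x_i\|^2=n$, this gives $P^2\le n\sum_{i,j}m_{ij}^2\|x_i\|\|x_j\|$, and identically $Q^2\le n\sum_{i,j}q_{ij}^2\|x_i\|\|x_j\|$. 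Adding the two and invoking $t_{ij}=n(m_{ij}^2+q_{ij}^2)$ yields
\begin{align*}
\left|\langle\mathbf{T}\mathbf{x},\mathbf{x}\rangle\right|^2+\left|\langle\mathbf{T}^*\mathbf{T}\mathbf{x},\mathbf{x}\rangle\right|^2\le P^2+Q^2\le\sum_{i,j=1}^n t_{ij}\|x_i\|\|x_j\|=\langle[t_{ij}]y,y\rangle\le w([t_{ij}]),
\end{align*}
the last inequality because $\|y\|=1$ and the entries $t_{ij}$ are nonnegative. Taking the supremum over all unit $\mathbf{x}$ gives $dw^2(\mathbf{T})\le w([t_{ij}])$, which is precisely \eqref{eq3.10}.

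The main obstacle I anticipate is the squaring step: one must turn the squares of the two positive bilinear sums $P$ and $Q$ into one honest quadratic form $\langle[t_{ij}]y,y\rangle$ \emph{without} discarding the factor $n$ built into $t_{ij}$. The clean device is the Cauchy--Schwarz splitting above, whose second factor $\sum_{i,j}\|x_i\|\|x_j\|=(\sum_i\|x_i\|)^2$ is controlled by $n$; this is exactly where the constant $n$ enters and is what separates the squared estimate \eqref{eq3.10} from the un-squared bound of Theorem \ref{thm8}. Everything else reduces to the block-matrix inner-product estimates already carried out there.
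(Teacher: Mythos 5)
Your argument reaches the same bounds as the paper but by a partly different route, and on the one point where the routes diverge yours is the sounder. For the lower bound you invoke the first inequality of \eqref{eq2.11} with $S=\mathbf{T}$, which is legitimate since $\bigoplus_{i=1}^n\mathscr{H}_i$ is a Hilbert space; the paper instead re-runs the proof of Theorem \ref{thm4} inline for $\mathbf{T}$, so the substance is identical and your citation is the cleaner packaging. For the upper bound the paper squares \emph{first}: it expands the two inner products as double sums and pulls the square inside by a ``Jensen'' step with constant $n$ (inequalities \eqref{eq3.11}--\eqref{eq3.12}), then estimates termwise using $\|x_i\|^4\le\|x_i\|^2$ and $\|x_i\|^2\|x_j\|^2\le\|x_i\|\|x_j\|$. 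You bound \emph{first}, reusing the linear block estimates \eqref{eq3.4}--\eqref{eq3.5} of Theorem \ref{thm8}, and square afterwards by Cauchy--Schwarz combined with $\left(\sum_i\|x_i\|\right)^2\le n\sum_i\|x_i\|^2=n$. This is a genuine gain in rigor: applied literally to a double sum with $n^2$ terms, the paper's Jensen step would yield the constant $n^2$, not $n$ (test $a_{ij}\equiv1$), so \eqref{eq3.11} is not justified as written, whereas your splitting produces the factor $n$ honestly, and your bookkeeping $t_{ij}=n\left(m_{ij}^2+q_{ij}^2\right)$ is exactly right in every position.

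One defect you share with the paper --- so it is not a gap you introduced, but it should be flagged: your estimate for $Q$, like the paper's \eqref{eq3.12}, rests on the expansion $\langle\mathbf{T}^*\mathbf{T}\mathbf{x},\mathbf{x}\rangle=\sum_{i,j}\langle T_{ij}^*T_{ij}x_j,x_i\rangle$ behind \eqref{eq3.5}. That identity is false: the $(i,j)$ block of $\mathbf{T}^*\mathbf{T}$ is $\sum_k T_{ki}^*T_{kj}$, not $T_{ij}^*T_{ij}$, and for $i\ne j$ the expression $\langle T_{ij}^*T_{ij}x_j,x_i\rangle$ is not even well defined, since $T_{ij}^*T_{ij}x_j\in\mathscr{H}_j$ while $x_i\in\mathscr{H}_i$. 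A complete proof of the upper bound in \eqref{eq3.10} would have to estimate $\|\mathbf{T}\mathbf{x}\|^2=\sum_k\bigl\|\sum_j T_{kj}x_j\bigr\|^2$ directly rather than through \eqref{eq3.5}; until that is done, the paper's argument and yours are incomplete at this same step.
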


\begin{proof}
	Let ${\bf x} = \left[ {\begin{array}{*{20}c}
		{x_1 } & {x_2 } &  \cdots  & {x_n }  \\
		\end{array}} \right]^T \in \bigoplus _{i = 1}^n \mathscr{H}_i$ with $\|{\bf x}\|=
	\sum\limits_{i = 1}^n {\left\| {x_i } \right\|^2 }  = 1$. 
	Then we have
	\begin{align*}
	dw\left({\bf T}\right)=\mathop {\sup }\limits_{\scriptstyle {\bf x} \in \mathscr{H} \hfill \atop 
		\scriptstyle \left\| {\bf x} \right\| = 1 \hfill} \left\{ \sqrt{
		\left| {\left\langle {{\bf T}{\bf x},{\bf x}} \right\rangle } \right|^2  + \left\| {{\bf T}{\bf x}} \right\|^4 }\right\}
	=\mathop {\sup }\limits_{\scriptstyle {\bf x} \in \mathscr{H} \hfill \atop 
		\scriptstyle \left\| {\bf x} \right\| = 1 \hfill}\sqrt{\left| {\left\langle {{\bf T}{\bf x},{\bf x}} \right\rangle } \right|^2+\left| {\left\langle {{\bf T}^*{\bf T}{\bf x},{\bf x}} \right\rangle } \right|^2}.
	\end{align*}
	But since
	\begin{align}
	\left| {\left\langle {{\bf T}{\bf x},{\bf x}} \right\rangle } \right|^2 &= \left| {\sum\limits_{i,j = 1}^n {\left\langle {T_{ij} x_j ,x_i } \right\rangle } } \right|^2 
	\nonumber\\
	&\le n\cdot\sum\limits_{i,j = 1}^n {\left| {\left\langle {T_{ij} x_j ,x_i } \right\rangle } \right|^2}  \nonumber \qquad\qquad \text{(by Jensen's inequality)}
	\nonumber	\\
	&\le  n\cdot \sum\limits_{i  = 1}^n {\left| {\left\langle {T_{ii} x_i ,x_i } \right\rangle } \right|^2}  + n\cdot \sum\limits_{j \ne i  }^n {\left| {\left\langle {T_{ij} x_j ,x_i } \right\rangle } \right|^2} \nonumber
	\nonumber\\
	&\le   n\cdot\sum\limits_{i = 1}^n {   w^2 \left( {T_{ii} } \right)\left\| {x_i } \right\|^ 4 }  + n\cdot\sum\limits_{j \ne i}^n {\left\| {T_{ij} } \right\|^2\left\| {x_i } \right\|^2\left\| {x_j } \right\|^2}\nonumber
	\\
	&\le   n\cdot\sum\limits_{i = 1}^n {   w^2 \left( {T_{ii} } \right)\left\| {x_i } \right\|^2  }  + n\cdot\sum\limits_{j \ne i}^n {\left\| {T_{ij} } \right\|^2\left\| {x_i } \right\| \left\| {x_j } \right\|  }, \label{eq3.11}	
	\end{align}
	the last inequality holds, since $\|x_i\|^4 \le \|x_i\|^2 \le1$ and $\|x_i\|^2 \le \|x_i\|\le1$ for all $i=,\cdots, n$;	where $y=\left( {\begin{array}{*{20}c}{\left\| {x_1 } \right\|} & {\left\| {x_2 } \right\|} &  \cdots  & {\left\| {x_n } \right\|}  \\  \end{array}} \right)^T$. \\

	Similarly, we have
	\begin{align}
	\left| {\left\langle {{\bf T}^*{\bf T}{\bf x},{\bf x}} \right\rangle } \right|^2 &= \left| {\sum\limits_{i,j = 1}^n {\left\langle {T_{ij}^* T_{ij}x_j,x_i } \right\rangle } } \right|^2 \nonumber
	\\
	&\le   n\cdot\sum\limits_{i = 1}^n {   w^2 \left( {T^*_{ii}T_{ii} } \right)\left\| {x_i } \right\|^2 }  + n\cdot\sum\limits_{j \ne i}^n {\left\| {T^*_{ij} T_{ij} } \right\|^2\left\| {x_i } \right\| \left\| {x_j } \right\| }.\label{eq3.12}
	\end{align}
	Adding \eqref{eq3.11} and \eqref{eq3.12}, we get 
	\begin{align*}
	dw^2\left({\bf T}\right)&=\mathop {\sup }\limits_{\scriptstyle {\bf x} \in \mathscr{H} \hfill \atop 
		\scriptstyle \left\| {\bf x} \right\| = 1 \hfill} \left\{\left| {\left\langle {{\bf T}{\bf x},{\bf x}} \right\rangle } \right|^2+\left| {\left\langle {{\bf T}^*{\bf T}{\bf x},{\bf x}} \right\rangle } \right|^2 \right\}
	\\
	&\le n\cdot\sum\limits_{i = 1}^n { \left(  w^2 \left( {T_{ii} } \right)  +w^2 \left( {T^*_{ii}T_{ii} } \right) \right)\left\| {x_i } \right\|^2 }  + n\cdot\sum\limits_{j \ne i}^n {\left(\left\| {T_{ij} } \right\|^2 +\left\| {T^*_{ij} T_{ij} } \right\|^2\right)\left\| {x_i } \right\| \left\| {x_j } \right\| }
	\\
	&= n\cdot  \sum\limits_{i = 1}^n { \left(  w^2  \left( {T_{ii} } \right)  +\left\| {T_{ii} } \right\|^4 \right)\left\| {x_i } \right\|^2  }  +  \sum\limits_{j \ne i}^n {\left(\left\| {T_{ij} } \right\|^2  +\left\| {T_{ij} } \right\|^4 \right)\left\| {x_i } \right\|  \left\| {x_j } \right\|  }
	\\
	&\le  n\cdot\sum_{i,j=1}^n {t_{ij}\left\| {x_i } \right\| \left\| {x_j } \right\| }
	\\
	&=  n\cdot\left\langle {\left[ {t_{ij} } \right]y,y} \right\rangle. 
	\end{align*}		
	Taking the supremum over ${\bf x} \in \bigoplus \mathscr{H}_i$, we obtain the right-hand side inequality.
	
	To prove the left hand side inequality we note that
	\begin{align*}
	dw^2\left({\bf T}\right)&=\mathop {\sup }\limits_{\scriptstyle {\bf x} \in \mathscr{H} \hfill \atop 
		\scriptstyle \left\| {\bf x} \right\| = 1 \hfill} \left\{ \left| {\left\langle {{\bf T}{\bf x},{\bf x}} \right\rangle } \right|^2+\left| {\left\langle {{\bf T}^*{\bf T}{\bf x},{\bf x}} \right\rangle } \right|^2 \right\}
	\\
	&\ge\frac{1}{2}\mathop {\sup }\limits_{\scriptstyle {\bf x} \in \mathscr{H} \hfill \atop 
		\scriptstyle \left\| {\bf x} \right\| = 1 \hfill} \left\{ \left| {\left\langle {{\bf T}{\bf x},{\bf x}} \right\rangle } \right| +\left| {\left\langle {{\bf T}^*{\bf T}{\bf x},{\bf x}} \right\rangle } \right|  \right\}^2
	\\
	&\ge\frac{1}{2}\mathop {\sup }\limits_{\scriptstyle {\bf x} \in \mathscr{H} \hfill \atop 
		\scriptstyle \left\| {\bf x} \right\| = 1 \hfill} \left\{ \left|\left\langle {\left( {{\bf{T}} + {\bf{T}}^* {\bf{T}}} \right){\bf{x}},{\bf{x}}} \right\rangle \right|^2  \right\} 
	\\
	&=\frac{1}{2}\left\|{{\bf{T}} + {\bf{T}}^* {\bf{T}}} \right\|,
	\end{align*}		
	as required.	
\end{proof}

\begin{corollary}
	\label{cor5}Let ${\bf{T}}={\left[ {\begin{array}{*{20}c}
			{T_{11} } & {T_{12} }  \\
			{T_{21} } & {T_{22} }  \\
			\end{array}} \right]} \in \mathscr{B}\left(\mathscr{H}_1\oplus\mathscr{H}_2\right)$. Then	
	\begin{align}
	\label{eq3.13}  dw  \left( {{\bf{T}}} \right) \le  \sqrt{   a + d + \sqrt {\left( {a - d} \right)^2  +\left(b+c\right)^2 }},
	\end{align}
	where,
	\begin{align*}
	a  = w^2 \left( {T_{11} } \right) + \left\|{ T_{11} } \right\|^4,\,\,
	b  =\left\| {T_{12} } \right\|^2 +\left\| {T_{12} } \right\|^4,
	\,\,  
	c = \left\| {T_{21} } \right\|^2 +\left\| {T_{21} } \right\|^4,\,\,
	d  = w^2 \left( {T_{22} } \right) +   \left\| {T_{22} } \right\|^4.\nonumber
	\end{align*}
\end{corollary}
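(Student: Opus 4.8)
The plan is to obtain Corollary \ref{cor5} as the direct specialization of Theorem \ref{thm9} to the case $n=2$, followed by an explicit evaluation of the numerical radius of the resulting $2\times 2$ nonnegative scalar matrix exactly as in the proof of Corollary \ref{cor2}. First I would set $n=2$ in Theorem \ref{thm9}. Writing $a = w^2(T_{11}) + \|T_{11}\|^4$, $b = \|T_{12}\|^2 + \|T_{12}\|^4$, $c = \|T_{21}\|^2 + \|T_{21}\|^4$ and $d = w^2(T_{22}) + \|T_{22}\|^4$, the defining formula $t_{ij} = n\cdot\{\cdots\}$ with $n=2$ produces precisely the matrix $[t_{ij}] = \bigl[\begin{smallmatrix} 2a & 2b \\ 2c & 2d\end{smallmatrix}\bigr]$, so Theorem \ref{thm9} yields $dw({\bf{T}}) \le w^{1/2}\bigl(\bigl[\begin{smallmatrix} 2a & 2b \\ 2c & 2d\end{smallmatrix}\bigr]\bigr)$.

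Next I would strip off the common factor $2$ by the positive homogeneity of the numerical radius, namely $w(\lambda M) = |\lambda|\, w(M)$, reducing the estimate to $dw({\bf{T}}) \le \sqrt{2}\, w^{1/2}\bigl(\bigl[\begin{smallmatrix} a & b \\ c & d\end{smallmatrix}\bigr]\bigr)$. Since $a,b,c,d$ are nonnegative reals (each being a sum of a squared numerical radius or norm), the remaining task is to evaluate $w\bigl(\bigl[\begin{smallmatrix} a & b \\ c & d\end{smallmatrix}\bigr]\bigr)$. Here I would reuse verbatim the computation from the proof of Corollary \ref{cor2}: for a $2\times 2$ matrix with nonnegative entries the numerical radius coincides with the spectral radius of its symmetrization, so that
\[
w\left(\begin{bmatrix} a & b \\ c & d\end{bmatrix}\right) = r\left(\begin{bmatrix} a & \frac{b+c}{2} \\ \frac{b+c}{2} & d\end{bmatrix}\right) = \frac{1}{2}\left(a+d+\sqrt{(a-d)^2 + (b+c)^2}\right).
\]

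Finally I would combine the two steps. Substituting this value gives $w^{1/2}\bigl(\bigl[\begin{smallmatrix} a & b \\ c & d\end{smallmatrix}\bigr]\bigr) = \bigl(\tfrac{1}{2}(a+d+\sqrt{(a-d)^2+(b+c)^2})\bigr)^{1/2}$, and multiplying by $\sqrt{2}$ cancels the factor $\tfrac12$ under the root, producing exactly $\sqrt{a+d+\sqrt{(a-d)^2+(b+c)^2}}$, which is \eqref{eq3.13}. The argument is essentially bookkeeping; the one point I would watch most closely is the tracking of the constant $n=2$ coming from Theorem \ref{thm9} against the factor $\tfrac12$ coming from the $2\times 2$ numerical-radius formula, since it is precisely their exact cancellation that removes all numerical constants from the final bound. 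I would also confirm at the outset that $a,b,c,d\ge 0$ so that the symmetrization identity from Corollary \ref{cor2} is legitimately applicable.
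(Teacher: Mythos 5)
Your proposal is correct and follows essentially the same route as the paper: specialize Theorem \ref{thm9} to $n=2$, identify $[t_{ij}]$ with twice the matrix $\bigl[\begin{smallmatrix} a & b \\ c & d\end{smallmatrix}\bigr]$, and evaluate its numerical radius via the symmetrization/spectral-radius identity exactly as in Corollary \ref{cor2}, with the factor $2$ cancelling the $\tfrac12$ from that formula. The only cosmetic difference is that you extract the factor $2$ by homogeneity before evaluating the numerical radius, whereas the paper carries it along and cancels at the end; the computations are identical.
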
	
\begin{proof}
	Take $n=2$ in Theorem \ref{thm9}. Let $a,b,c,d$ be as defined above. Then
	\begin{align*}
	dw^2 \left( {\left[ {\begin{array}{*{20}c}
			{T_{11} } & {T_{12} }  \\
			{T_{21} } & {T_{22} }  \\
			\end{array}} \right]} \right) &\le 2w\left( {\left[ {\begin{array}{*{20}c}
			a & b  \\
			c & d  \\
			\end{array}} \right]} \right) \\ 
	&= 2r\left( {\left[ {\begin{array}{*{20}c}
			a & {\frac{{b + c}}{2}}  \\
			{\frac{{b + c}}{2}} & d  \\
			\end{array}} \right]} \right) \\ 
	&= 2r\left( {\left[ {\begin{array}{*{20}c}
			a & {\frac{{b + c}}{2}}  \\
			{\frac{{b + c}}{2}} & d  \\
			\end{array}} \right]} \right) \\ 
	&=   a + d + \sqrt {\left( {a - d} \right)^2  + \left(b+c\right)^2 }. 
	\end{align*}
	which proves the required inequality.
\end{proof}	
\begin{corollary}
\label{cor6}	Let ${\left[ {\begin{array}{*{20}c}
			{T_{11} } & {0}  \\
			{0} & {T_{22} }  \\
			\end{array}} \right]} \in \mathscr{B}\left(\mathscr{H}_1\oplus\mathscr{H}_2\right)$, then
	\begin{align}
	dw  \left( {\left[ {\begin{array}{*{20}c}
			{T_{11} } & {0}  \\
			{0} & {T_{22} }  \\
			\end{array}} \right]} \right) \le \sqrt{2}\max\left\{\sqrt{w^2 \left( {T_{11} } \right) +  \left\|T_{11}\right\|^4} ,\sqrt{w^2 \left( {T_{22} } \right) +  \left\|T_{22}\right\|^4 }\right\} 
	\end{align}
	In  special case, if $\mathscr{H}_1=\mathscr{H}_2$ and $ T_{11}=T_{22}=T$, then	
	\begin{align}
	dw  \left( {\left[ {\begin{array}{*{20}c}
			{T  } & {0}  \\
			{0} & {T  }  \\
			\end{array}} \right]} \right) \le \sqrt{2}\left(w^2 \left( {T } \right) +  \left\|T \right\|^4 \right)^{1/2}.
	\end{align}
\end{corollary}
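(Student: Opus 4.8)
The plan is to obtain this result as an immediate specialization of Corollary \ref{cor5}, which already handles the full $2\times 2$ block case. First I would set $T_{12}=T_{21}=0$ in Corollary \ref{cor5}. This forces the off-diagonal quantities to vanish, namely $b=\left\|T_{12}\right\|^2+\left\|T_{12}\right\|^4=0$ and $c=\left\|T_{21}\right\|^2+\left\|T_{21}\right\|^4=0$, while the diagonal quantities retain the form $a=w^2\left(T_{11}\right)+\left\|T_{11}\right\|^4$ and $d=w^2\left(T_{22}\right)+\left\|T_{22}\right\|^4$.

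Substituting $b=c=0$ into the bound \eqref{eq3.13} collapses the inner square root to $\sqrt{\left(a-d\right)^2}=\left|a-d\right|$, so that
\begin{align*}
dw\left(\left[ {\begin{array}{*{20}c}
	{T_{11}} & {0}  \\
	{0} & {T_{22}}  \\
	\end{array}} \right]\right)\le \sqrt{a+d+\left|a-d\right|}.
\end{align*}
The next step is the elementary identity $a+d+\left|a-d\right|=2\max\left\{a,d\right\}$, valid for all real $a,d$, which yields $dw\left(\cdot\right)\le\sqrt{2\max\left\{a,d\right\}}=\sqrt{2}\max\left\{\sqrt{a},\sqrt{d}\right\}$. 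Inserting the explicit values of $a$ and $d$ gives precisely the claimed bound
\begin{align*}
dw\left(\left[ {\begin{array}{*{20}c}
	{T_{11}} & {0}  \\
	{0} & {T_{22}}  \\
	\end{array}} \right]\right)\le \sqrt{2}\max\left\{\sqrt{w^2\left(T_{11}\right)+\left\|T_{11}\right\|^4},\sqrt{w^2\left(T_{22}\right)+\left\|T_{22}\right\|^4}\right\}.
\end{align*}

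For the special case $\mathscr{H}_1=\mathscr{H}_2$ with $T_{11}=T_{22}=T$, I would simply note that $a=d=w^2\left(T\right)+\left\|T\right\|^4$, so the maximum is attained by the common value and the bound reduces to $dw\left(\cdot\right)\le\sqrt{2}\left(w^2\left(T\right)+\left\|T\right\|^4\right)^{1/2}$, as stated. There is essentially no obstacle here: the entire argument is a direct substitution into Corollary \ref{cor5} followed by the $\max$-identity, and the only point requiring the tiniest care is recognizing that $\sqrt{\left(a-d\right)^2}=\left|a-d\right|$ rather than $a-d$, so that the formula is symmetric in the two diagonal blocks and correctly produces a maximum.
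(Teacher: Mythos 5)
Your proposal is correct and takes essentially the same approach as the paper: the paper's own proof likewise specializes Corollary \ref{cor5} to the block-diagonal case (so that $b=c=0$) and passes to $2\max\{a,d\}$, exactly as you do via the identity $a+d+\left|a-d\right|=2\max\{a,d\}$. If anything, your write-up is cleaner, since the paper detours through $w^2\left(T_{ii}^*T_{ii}\right)$ before identifying it with $\left\|T_{ii}\right\|^4$, a step your direct substitution avoids.
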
	
\begin{proof}	
	Form Corollary \ref{cor5}, we have	
	\begin{align*}
	dw^2 \left( {\left[ {\begin{array}{*{20}c}
			{T_{11} } & {0}  \\
			{0} & {T_{22} }  \\
			\end{array}} \right]} \right) &\le2\max\left\{w^2 \left( {T_{11} } \right) + w^2 \left( {T_{11}^* T_{11} } \right),w^2 \left( {T_{22} } \right) + w^2 \left( {T_{22}^* T_{22} } \right)\right\}
	\\
	&=2\max\left\{w^2 \left( {T_{11} } \right) + w^2 \left( { \left|T_{11}\right|^2 } \right),w^2 \left( {T_{22} } \right) + w^2 \left( {\left|T_{22}\right|^2 } \right)\right\}
	\\
	&\le 2\max\left\{w^2 \left( {T_{11} } \right) +  \left\|T_{11}\right\|^4 ,w^2 \left( {T_{22} } \right) +  \left\|T_{22}\right\|^4 \right\},
	\end{align*}	
	which gives the desired result. 
\end{proof}	

\begin{remark}
Using the same approach considered in Theorem \ref{thm7}, one can refine Theorems \ref{thm8} and \ref{thm9}.
\end{remark} 

Finally, we introduce the concept of the Euclidean Davis--Wielandt radius. In fact, for an $n$-tuple $ {\bf{S}} = \left( {S_1 , \cdots ,S_n } \right) \in  \mathscr{B}\left(\mathscr{H}\right)^{n}:=\mathscr{B}\left(\mathscr{H}\right)\times \cdots \times \mathscr{B}\left(\mathscr{H}\right)$; i.e.,  for $S_1,\cdots,S_n \in \mathscr{B}\left(\mathscr{H}\right)$,
one of the most interesting generalization of the Davis--Wielandt radius $dw\left(  \cdot   \right)$, is the Euclidean Davis--Wielandt radius, which is defined  as:
\begin{align}
\label{eq3.16}dw_{\rm{e}}\left( S_1,\cdots,S_n \right) = \mathop {\sup }\limits_{\scriptstyle x \in H \hfill \atop 
	\scriptstyle \left\| x \right\| = 1 \hfill} \left( { \sum\limits_{i = 1}^n \left(\left| {\left\langle {S_ix,x} \right\rangle } \right|^2  + \left\| {S_ix} \right\|^4\right) } \right)^{1/2}.
\end{align}
Indeed, a nice relation between the Euclidean operator radius \eqref{eq2.6} and the Euclidean Davis--Wielandt radius \eqref{eq3.16}, can be constructed as follows:\\

For any positive integer $n$, let $T_i\in\mathscr{B}\left(\mathscr{H}\right)$ $(i=1,\cdots,2n)$. Therefore, we have
\begin{align*}
w_{\rm{e}} \left( {T_1 , \cdots ,T_{2n} } \right): = \mathop {\sup }\limits_{\left\| x \right\| = 1} \left( {\sum\limits_{i = 1}^{2n} {\left| {\left\langle {T_i x,x} \right\rangle } \right|^2 } } \right)^{1/2}\qquad\text{for all}\,\, x\in\mathscr{H}. 
\end{align*}
Let $S_i\in\mathscr{B}\left(\mathscr{H}\right)$ $(i=1,\cdots,n)$. Construct the following sequence of operators $S_i$ in terms of $T_i$, given as:
\begin{align*}
T_1&=S_1, \qquad\text{and} \qquad T_2=S_1^*S_1; 
\\
T_3&=S_2, \qquad\text{and} \qquad T_4=S_2^*S_2 ;
\\
T_5&=S_3, \qquad\text{and} \qquad T_6=S_3^*S_3 ;
\\
\vdots  
\\
T_{2n-1}&=S_n, \qquad\text{and} \qquad T_{2n}=S_n^*S_n.
\end{align*}
Now, we have
\begin{align*}
w_{\rm{e}} \left( {T_1 , \cdots ,T_{2n} } \right)&:= \mathop {\sup }\limits_{\left\| x \right\| = 1} \left( {\sum\limits_{i = 1}^{2n} {\left| {\left\langle {T_i x,x} \right\rangle } \right|^2 } } \right)^{1/2} 
\\
&= \mathop {\sup }\limits_{\left\| x \right\| = 1} \left( {\sum\limits_{i = 1}^{n} {\left(\left| {\left\langle {S_i x,x} \right\rangle } \right|^2 +\left| {\left\langle {S^*_iS_i x,x} \right\rangle } \right|^2\right)} } \right)^{1/2}
\\
&=dw_{\rm{e}}\left( S_1,\cdots,S_n \right).
\end{align*}
which gives a very elegant relation between the Euclidean operator radius and the Euclidean Davis--Wielandt radius.

Now, from the definition of the Euclidean Davis--Wielandt radius \eqref{eq3.16}, we have
\begin{align*}
dw_{\rm{e}}\left( S_1,\cdots,S_n \right)&=\mathop {\sup }\limits_{\scriptstyle x \in H \hfill \atop 
	\scriptstyle \left\| x \right\| = 1 \hfill} \left( { \sum\limits_{i = 1}^n\left| {\left\langle {S_ix,x} \right\rangle } \right|^2  + \left\| {S_ix} \right\|^4 } \right)^{1/2}
\\
&= \mathop {\sup }\limits_{\left\| x \right\| = 1} \left( {\sum\limits_{i = 1}^{n} {\left(\left| {\left\langle {S_i x,x} \right\rangle } \right|^2 +\left| {\left\langle {S^*_iS_i x,x} \right\rangle } \right|^2\right)} } \right)^{1/2} 
\\
&\le \left( {  \mathop {\sup }\limits_{\left\| x \right\| = 1} \sum\limits_{i = 1}^{n} \left| {\left\langle {S_i x,x} \right\rangle } \right|^2 + \mathop {\sup }\limits_{\left\| x \right\| = 1}\sum\limits_{i = 1}^{n}  \left| {\left\langle {S^*_iS_i x,x} \right\rangle } \right|^2  } \right)^{1/2} 
\\
&\le \left( {  \mathop {\sup }\limits_{\left\| x \right\| = 1} \sum\limits_{i = 1}^{n} \left| {\left\langle {S_i x,x} \right\rangle } \right|^2 } \right)^{1/2}+ \left( { \mathop {\sup }\limits_{\left\| x \right\| = 1}\sum\limits_{i = 1}^{n}  \left| {\left\langle {S^*_iS_i x,x} \right\rangle } \right|^2  } \right)^{1/2} 
\\
&= w_{\rm{e}}\left( S_1,\cdots,S_n \right)+w_{\rm{e}}\left( \left|S_1\right|^2,\cdots,\left|S_n\right|^2 \right).
\end{align*}
Also, one can observe that
\begin{align*}
dw_{\rm{e}}\left( S_1,\cdots,S_n \right)\ge \max\left\{w_{\rm{e}}\left( S_1,\cdots,S_n \right),w_{\rm{e}}\left( \left|S_1\right|^2,\cdots,\left|S_n\right|^2 \right)\right\}.
\end{align*}
Thus, we just proved the following result.
\begin{theorem}
	Let $S_i\in\mathscr{B}\left(\mathscr{H}\right)$ $(i=1,\cdots,n)$. Then,
	\begin{align*}
	\max\left\{w_{\rm{e}}\left( S_1,\cdots,S_n \right),w_{\rm{e}}\left( \left|S_1\right|^2,\cdots,\left|S_n\right|^2 \right)\right\} &\le dw_{\rm{e}}\left( S_1,\cdots,S_n \right) 
	\\
	&\le w_{\rm{e}}\left( S_1,\cdots,S_n \right)+w_{\rm{e}}\left( \left|S_1\right|^2,\cdots,\left|S_n\right|^2 \right).
	\end{align*}
\end{theorem}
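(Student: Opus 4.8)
The plan is to reduce everything to the definition \eqref{eq3.16} and to exploit the fact that, for each index $i$, the quantity $\left\|S_ix\right\|^4$ is the square of a \emph{nonnegative} inner product. First I would observe that, since $\left|S_i\right|^2=S_i^*S_i$ is positive and selfadjoint, one has $\left\|S_ix\right\|^2=\left\langle S_i^*S_ix,x\right\rangle=\left\langle\left|S_i\right|^2x,x\right\rangle\ge0$, so that $\left\|S_ix\right\|^4=\left|\left\langle\left|S_i\right|^2x,x\right\rangle\right|^2$. Substituting this into \eqref{eq3.16} rewrites $dw_{\rm{e}}\left(S_1,\cdots,S_n\right)$ as the supremum over unit vectors of $\bigl(\sum_{i=1}^n\left|\left\langle S_ix,x\right\rangle\right|^2+\sum_{i=1}^n\left|\left\langle\left|S_i\right|^2x,x\right\rangle\right|^2\bigr)^{1/2}$, which is precisely the Euclidean operator radius of the interleaved $2n$-tuple $\left(S_1,\left|S_1\right|^2,\cdots,S_n,\left|S_n\right|^2\right)$ in the sense of \eqref{eq2.6}.

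For the upper bound, I would split the sum inside the square root into its two blocks and invoke the elementary subadditivity of the supremum, $\sup(f+g)\le\sup f+\sup g$, to push the supremum into each block separately. This yields $dw_{\rm{e}}^2\left(S_1,\cdots,S_n\right)\le w_{\rm{e}}^2\left(S_1,\cdots,S_n\right)+w_{\rm{e}}^2\left(\left|S_1\right|^2,\cdots,\left|S_n\right|^2\right)$. Applying the inequality $\sqrt{a+b}\le\sqrt a+\sqrt b$ for $a,b\ge0$ then delivers the stated right-hand estimate.

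For the lower bound, I would use that each block of the summand consists of nonnegative terms, so discarding one block only decreases the expression. Retaining only the $S_i$ block and taking the supremum gives $dw_{\rm{e}}\left(S_1,\cdots,S_n\right)\ge w_{\rm{e}}\left(S_1,\cdots,S_n\right)$, while retaining only the $\left|S_i\right|^2$ block gives $dw_{\rm{e}}\left(S_1,\cdots,S_n\right)\ge w_{\rm{e}}\left(\left|S_1\right|^2,\cdots,\left|S_n\right|^2\right)$. Taking the maximum of the two bounds completes the left-hand estimate.

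There is essentially no hard step here: the whole argument rests on the single reformulation $\left\|S_ix\right\|^4=\left|\left\langle\left|S_i\right|^2x,x\right\rangle\right|^2$, after which both inequalities follow from standard facts about suprema and monotonicity of the square-root function. The only point worth a word of care is that the subadditivity of the supremum is generally strict, so the upper bound is a genuine inequality rather than an identity; this is exactly why the two-sided estimate does not collapse to an equality in general.
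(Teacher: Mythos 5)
Your proposal is correct and follows essentially the same route as the paper: both rest on the identification $\left\| {S_i x} \right\|^4 = \left| {\left\langle {S_i^* S_i x,x} \right\rangle } \right|^2$ (equivalently, viewing $dw_{\rm{e}}$ as the Euclidean operator radius of the interleaved $2n$-tuple), then obtain the upper bound from subadditivity of the supremum together with $\sqrt{a+b}\le\sqrt{a}+\sqrt{b}$, and the lower bound by discarding one nonnegative block before taking the supremum. No gaps; the argument is complete as stated.
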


One can generalizes the results in Section \ref{sec2}, by following the same procedure above. As a direct result, from Lemma \ref{lemma5} and Theorem \ref{thm3}, one can easily observe that
\begin{align*}
\frac{1}{4}\left\| {\sum\limits_{k = 1}^n { \left(\left|S_k\right|^2 +\left|S_k^*\right|^2 +2\left|S_k\right|^4  \right) } } \right\| \le dw^2_{\rm{e}}\left( S_1,\cdots,S_n \right)\le \frac{1}{2}\left\| {\sum\limits_{k = 1}^n {\left(\left|S_k\right|^2 +\left|S_k^*\right|^2 +2\left|S_k\right|^4  \right)    } } \right\|,
\end{align*}
by setting $p=1$ in \eqref{eq2.9}, taking into account the number of operators in \eqref{eq2.9} is $2n$ instead of $n$ and the previous mentioned sequence of operators. We leave the rest of other generalizations for the interested reader. 

\begin{remark}
	In Lemma \ref{lemma4}, we have shown that $w_{\rm{e}} \left( {S , S^*S} \right)= dw\left(S\right)$. Using the same idea, we generalize  the     Davis--Wielandt radius using the generalized Euclidean operator radius $w_p \left( {\cdot , \cdot} \right)$. Since we have
	\begin{align} 
	\label{eq3.17}w_{p} \left( {T_1 , \cdots ,T_n } \right): = \mathop {\sup }\limits_{\left\| x \right\| = 1} \left( {\sum\limits_{i = 1}^n {\left| {\left\langle {T_i x,x} \right\rangle } \right|^p } } \right)^{1/p}, \qquad  p\ge1.
	\end{align} 
	Therefore,
	by setting $n=2$, $T_1=S$ and $T_2=S^*S$\,\,\,  $\left(S\in \mathscr{B}\left(\mathscr{H}\right)\right)$ in \eqref{eq3.17}, we have
	\begin{align*}
	w_p \left( {S , S^*S} \right)&:= \mathop {\sup }\limits_{\left\| x \right\| = 1} \left( { \left| {\left\langle {S x,x} \right\rangle } \right|^p   +\left| {\left\langle {S^*S x,x} \right\rangle } \right|^p } \right)^{1/p}
	\\
	&= \mathop {\sup }\limits_{\left\| x \right\| = 1} \left\{ {\sqrt[p] {\left| {\left\langle {Sx,x} \right\rangle } \right|^p  + \left\| {Sx} \right\|^{2p} } } \right\}
	\\
	&= dw_p\left(S\right)
	\end{align*}
	for all  $p\ge1$, and this is called the generalized Euclidean  Davis--Wielandt radius of $S$. Clearly, for $p=2$ we refer to the well-known  Davis--Wielandt radius, $dw_2\left(S\right)=dw\left(S\right)$.
	
	As an immediate consequence of Theorem \ref{thm3}, one can easily observe that
	\begin{align*}
	\frac{1}{2^{ p}}\left\| { \frac{\left|S\right|^2  +\left|S^*\right|^2}{2} + \left|S\right|^4 } \right\|^p \le  dw^{2p}_{2p}\left(S\right) \le  \left\| { \left( {\frac{\left|S\right|^2  +\left|S^*\right|^2}{2} } \right)^p + \left|S\right|^{4p}     } \right\|,\qquad p\ge1.
	\end{align*}
	At the end, one can use the presented inequalities in  \cite{P}--\cite{SMS}, to obtain several bounds for $dw_p \left( {\cdot} \right)$.  
\end{remark}
 
	%\vspace{0.1cm}

%%%%%%%%%%%%%%%%%%%%%%%%%%%%%%%%%%%%%%%%%%%%%%%%%%%%%%%%%%%%%%%%%%%%%%%%%%%%%%%%%%%%%%%%%%%%%%%%%%%%%%%%%%%%%%%%%%%%%%%
\vspace{0.2cm}


\begin{thebibliography}{99}
	
	 
 \bibitem{AF1} A. Abu-Omar and F. Kittaneh, Numerical radius for $n\times n$ operator matrices, {\em Linear Algebra Appl.}, {\bf468} (2015), 18--26.

\bibitem{AF2} A. Abu-Omar and F. Kittaneh, Estimates for the numerical radius and the spectral radius of the Frobenius companion matrix and bounds for the zeros of polynomials, {\em Ann. Funct. Anal.}, {\bf5} (2014), 56--62.

\bibitem{Alomari1} M.W. Alomari, Refinements of some numerical radius inequalities for Hilbert space operators, {\em Linear \& Multilinear Algebra}, \url{https://doi.org/10.1080/03081087.2019.1624682}, (2019).

 

\bibitem{BF} W. Bani-Domi and F. Kittaneh, Numerical radius inequalities for operator matrices, {\em Linear\& Multilinear Algebra}, {\bf57} (2009), 421--427.
 
\bibitem{BBBP}P. Bhunia, A. Bhanja, S. Bag, K. Paul, Bounds for the Davis--Wielandt radius of bounded linear operators,  \url{arXiv:2006.04389}, (2020).

\bibitem{BSP}P. Bhunia, D. Sain, K. Paul, On the Davis--Wielandt shell of an operator and the Davis--Wielandt index of a normed linear space, \url{arXiv:2006.1532}, (2020).


\bibitem{BP}P. Bhunia and  K. Paul, Some improvements of numerical radius inequalities of operators and operator matrices, {\em Linear \& Multilinear Algebra}, \url{https://doi.org/10.1080/03081087.2020.1781037}, (2020).

\bibitem{Davis1}C. Davis, The shell of a Hilbert-space operator, {\em  Acta Sci. Math.}, (Szeged) {\bf29}  (1968),
69--86.

\bibitem{Davis2} C. Davis, The shell of a Hilbert-space operator. II, {\em  Acta Sci. Math.}, (Szeged) {\bf 31}
(1970), 301--318.

 
 \bibitem{EF}M. El-Haddad and  F. Kittaneh,  Numerical radius inequalities for Hilbert space operators. II.
 {\em Studia Math.}, {\bf182} (2) (2007), 133--140.
 
 

 \bibitem{HD} J.C. Hou and H.K. Du, Norm inequalities of positive operator matrices, {\em Integral Equations Operator Theory}, {\bf22} (1995), 281--294.

 \bibitem{Feki}K. Feki and S.A.O.A. Mahmoud, Davis--Wielandt shells of semi-Hilbertian space operators
and its applications, {\em Banach J. Math. Anal.}, 
{\bf 14} (2020), 1281--1304.

 \bibitem{TK}T. Kato,  Notes on some inequalities for linear operators, {\em Math. Ann.,} {\bf125} (1952),
208--212.


\bibitem{FK3}F. Kittaneh, Norm inequalities for certian operator sums, {\em J. Funct. Anal.} {\bf143} (2) (1997), 337--348.


\bibitem{K1}F. Kittaneh, A numerical radius inequality and an estimate for the numerical radius of the Frobenius companion matrix, {\em Studia Math.} {\bf158} (2003), 11--17.

\bibitem{K2} F. Kittaneh, Numerical radius inequalities for Hilbert space operators, {\em  Studia Math.}, {\bf 168} (1) (2005), 73--80.
 



\bibitem{LP1}C.K. Li and Y.T. Poon,	Davis--Wielandt shells of normal operators, {\em Acta Sci. Math.}, (Szeged), {\bf75} (2009), 289--297.

\bibitem{LP2}C.K. Li and Y.T. Poon, Spectrum, numerical range and Davis--Wielandt shells of normal operator, {\em Glasgow Math. J.}, {\bf51} (2009), 91--100.

\bibitem{LPS1}C.K. Li, Y.T. Poon and N.S. Sze, Davis--Wielandt, Shells of operators, {\em Operators and Matrices}, {\bf2} (3) (2008), 341--355.

\bibitem{LPS2}C.K. Li, Y.T. Poon and N.S. Sze, Elliptical range theorems for  generalized numerical ranges of quadratic operators, {\em Rocky Mountain  J. Math.}, {\bf 41} (3)  (2011), 813--832.

\bibitem{LPT}C.K. Li, Y.T. Poon and M. Tominaga, Spectra, norms and numerical ranges of generalized, {\em Linear and Multilinear Algebra}, {\bf 59} (10) (2011), 1077--1104.
 
\bibitem{LSZ}B. Lins, I.M. Spitkovsky and S. Zhong, The normalized numerical range and the Davis--Wielandt shell, {\em Linear Algebra and its Applications},  {\bf546} (1) (2018), 187--209


	\bibitem{P}G. Popescu, Unitary invariants in multivariable operator theory, {\em  Mem. Amer. Math. Soc.}, Vol.
200, no 941, 2009.



\bibitem{MKS}M.S. Moslehian, M. Sattari and K. Shebrawi,   Extension of Euclidean operator radius inequalities,   {\em Mathematica Scandinavica}, {\bf120} (1) (2017), 129-144.

\bibitem{SMS}A. Sheikhhosseini, M. S. Moslehian and K. Shebrawi, Inequalities for generalized Euclidean
operator radius via Young’s inequality, {\em J. Math. Anal. Appl.}, {\bf445} (2017), 1516–1529.




\bibitem{W}H. Wielandt, On eigenvalues of sums of normal matrices, {\em Pacific J. Math.}, {\bf 5} (1955), 633--638.

 
\bibitem{ZMCN}A. Zamani, M.S. Moslehian, M.-T. Chien and H. Nakazato, Norm-parallelism and the Davis--Wielandt radius of Hilbert space operators, {\em Linear Multilinear Algebra}, {\bf 67} (11), (2019) 2147--
2158.


\bibitem{ZS}A. Zamani and K. Shebrawi, Some upper bounds for the Davis--Wielandt radius of Hilbert
space operators, {\em  Mediterr. J. Math.}, {\bf 17}, Article No. 25 (2020).







\end{thebibliography}
\end{document}